\newtheorem{theorem}{Theorem}[section]
\newtheorem*{theorem*}{Theorem}
\newtheorem{corollary}[theorem]{Corollary}
\newtheorem{lemma}[theorem]{Lemma}
\newtheorem{proposition}[theorem]{Proposition}
\theoremstyle{definition}
\newtheorem{definition}[theorem]{Definition}
\newtheorem{example}[theorem]{Example}
\newtheorem{remark}[theorem]{Remark}
\newcommand{\iR}{\mathbb{R}}
\newcommand{\iN}{\mathbb{N}}
\newcommand{\iZ}{\mathbb{Z}}
\newcommand{\Z}{\mathbb{Z}}
\newcommand{\R}{\mathbb{R}}
\newcommand{\N}{\mathbb{N}}
\renewcommand{\d}{\operatorname{d}\!}
\numberwithin{equation}{section}
\title{Discrete versions of the Li-Yau gradient estimate}
\author{Dominik Dier}
\email{dominik.dier@uni-ulm.de}
\address{Institut für Angewandte Analysis, 
Universit\"{a}t Ulm, Helmholtzstraße 18, D-89081 Ulm, Germany}
\author{Moritz Kassmann}
\email{moritz.kassmann@uni-bielefeld.de}
\address{Fakult\"{a}t f\"{u}r Mathematik, Postfach 100131, D-33501 Bielefeld, 
Germany}
\author{Rico Zacher}
\email{rico.zacher@uni-ulm.de}
\address{Institut für Angewandte Analysis, Universit\"{a}t Ulm, Helmholtzstraße 
18, D-89081 Ulm, Germany}
\keywords{Heat equation, graphs, Li-Yau estimate, differential Harnack 
inequality}
\subjclass{35R02, 35K05, 35K10, 05C10, 05C81}
\begin{document}
%%%%%%%%%%%%%%%%%%%%%%%%%%%%%%%%%%%%%%%%%%%%%%%%

\begin{abstract}
We study positive solutions to the heat equation on graphs. We prove variants 
of the Li-Yau gradient estimate and the differential Harnack inequality. For 
some graphs, we can show the estimates to be sharp. We 
establish new computation rules for differential operators on discrete spaces 
and introduce a relaxation function that governs the time dependency in the 
differential Harnack estimate. 
\end{abstract}

\maketitle

%%%%%%%%%%%%%%%%%%%%%%%%%%%%%%%%%%%%%%%
\section{Introduction}
%%%%%%%%%%%%%%%%%%%%%%%%%%%%%%%%%%%%%%%

The heat equation plays a fundamental role in several fields of Mathematics and 
provides a link between Analysis, Stochastics and Geometry. It has been 
intensively studied on different state spaces, e.g., the 
Euclidean space, Riemannian manifolds, and general metric measure spaces. 
In this work, we study pointwise estimates for positive solutions to the heat 
equation on graphs. We aim at precise results whenever this 
is possible. If the graph under consideration is small, i.e., if it contains 
only few vertices, then we check our estimates by explicit computation. For 
the sequence of graphs given by $(\tau \Z^d)_{\tau>0}$ we try to trace 
the influence 
of 
the parameter $\tau \to 0+$. This allows us to compare the 
estimates  with well-known results for the limit space $\R^d$.

\medskip

Before we explain the framework of our study in greater detail, let us review 
some fundamental results with regard to the heat equation on Riemannian 
manifolds. The classical gradient estimate given by Li-Yau \cite{LiYa86} 
holds true for positive solutions $u: [0, \infty) \times M \to (0, \infty)$ of 
the heat equation $\partial_t u - \Delta u = 0$ on a complete $d$-dimensional 
Riemannian manifold $M$ with $Ric(M) \geq 0$:
\begin{align}\label{eq:diffHarnack-classicalu}
\frac{| \nabla u (t,x)|^2}{u^2(t,x)} - \frac{\partial_tu(t,x)}{u(t,x)} \leq 
\frac{d}{2t} \qquad (t>0, x \in M)\,,
\end{align}
or, equivalently,
\begin{align}\label{eq:diffHarnack-classical}
|\nabla \log u (t,x)|^2 - \partial_t (\log u)(t,x) \leq \frac{d}{2t} \qquad ( 
t >0, x \in M)\,.
\end{align}

An important consequence of this estimate is a pointwise bound on the solution 
itself, which
can be obtained from integration over a path that connects two given points 
$(t_1,x_1)$ and
$(t_2, x_2)$ with $t_2 > t_1>0$:
\begin{align}\label{eq:Harnack-classical}
u(t_1,x_1) \leq u(t_2,x_2)\left( \frac{t_2}{t_1} \right)^{d/2} \exp \left( 
\frac{\rho^2(x_1,x_2)}{4(t_2-t_1)} \right).
\end{align}

Note that estimates \eqref{eq:diffHarnack-classicalu}, \eqref{eq:diffHarnack-classical}, and \eqref{eq:Harnack-classical} are sharp in the sense that corresponding 
equalities hold
true for the fundamental solution to the heat equation on $\mathbb{R}^d$, i.e., 
if $u(t,x)$
equals $(4 \pi t)^{-d/2} \exp \left(\frac{-|x|^2}{4t}\right)$.

\medskip

The aim of the current project is to study estimates of the type 
\eqref{eq:diffHarnack-classicalu}, \eqref{eq:diffHarnack-classical}, and \eqref{eq:Harnack-classical}
for positive
solutions to the heat equation on graphs. In order to establish a corresponding 
theory, we
establish new computation rules for functions defined on discrete spaces. 

\medskip

Let $G=(V,E)$ be a graph. All graphs appearing in this work are assumed to 
be undirected. For two vertices $x,y\in V$ we 
write $x\sim y$ if 
there is an edge between $x$ and $y$,
that is, $xy\in E$. We allow for edge weights; the weight of the edge $xy$ 
from $x$ to $y$ is denoted by $w_{xy}$  
and is always assumed to be positive. Moreover, we assume that the graph is  
locally finite, i.e., for every $x\in V$ the  
set of all $y\in V$ with $y\sim x$ is finite.

\medskip 

Set $\iR^V:=\{u:\,V\to \iR\}$ and assume $\mu:\,V\to (0,\infty)$. We consider 
the generalized Laplacian on $G$, which is the operator $\Delta: \iR^V\to 
\iR^V$ defined by 

\begin{align} \label{genlaplace}
\Delta u(x)=\,\frac{1}{\mu(x)}\sum_{y\sim x}w_{xy}\big(u(y)-u(x)\big) \qquad 
(x\in V)\,.
\end{align}

We will also use the operator $L:=-\Delta$. We say that a function 
$u:[0,\infty) \times V \to \R$ solves the heat equation on $G$ if $\partial_t 
u - \Delta u = 0$ on $[0,\infty) \times V$. 
%% MK: Entfernt, da es nirgendwo verwendet wird
%% Set $\omega(x)=\frac{1}{\mu(x)}\sum_{y\sim x} w_{xy}$ for $x\in V$. 
%%
We recall the definition of $\Gamma, \Gamma_2 : \mathbb{R}^V 
\times \mathbb{R}^V \to \mathbb{R}^V$:

\begin{align*}
2 \Gamma(v,w) &= \Delta(v w) - v \Delta w - w 
\Delta v \,, \\
2 \Gamma_2(v,w) &= \Delta(\Gamma(v,w)) - \Gamma(v,\Delta w) - \Gamma(w, \Delta 
v) \,.
\end{align*}

As it is usual, we write $\Gamma(v)$ instead of $\Gamma(v,v)$ and analogously 
$\Gamma_2(v)$ instead of $\Gamma_2(v,v)$. A crucial identity in the 
classical approach to Li-Yau estimates is

\begin{align}\label{eq:chain-log}
 \Delta (\log u) = \frac{\Delta u}{u} - |\nabla \log u|^2
\end{align}

for positive functions $u:M \to \R$. The equality follows directly from the 
chain rule. One way to compensate the lack of the chain rule for differences is 
provided in \cite{BHLLMY15}. Instead of \eqref{eq:chain-log}, the authors invoke 
the identity

\begin{align}
 2 \sqrt{u} \Delta (\sqrt{u}) = \Delta u - 2 \Gamma(\sqrt{u}) \,,
\end{align}

which holds true on graphs, too. This equality allows to derive estimates of 
$\Gamma(\sqrt{u})$ if $u$ is a 
positive solution to the heat equation. In the present work, we suggest to 
follow another path. We provide a discrete version of \eqref{eq:chain-log} and 
show for positive functions 
$u:V \to \R$

\begin{align} \label{eq:log-equality}
\Delta (\log u) = \frac{\Delta u}{u} - \Psi_\Upsilon (\log 
u) \,,
\end{align}

where $\Psi_\Upsilon (v)(x)=\,\frac{1}{\mu(x)}\sum_{y\sim x}w_{xy} 
\Upsilon\big(v(y)-v(x)\big)$ and $\Upsilon(z) = 
e^z - z - 1$. Note $\Psi_{\Upsilon} (\log u)$ equals $\Gamma^{\log} u$ as in 
\cite[Section 3]{Mue14}. In \autoref{sec:fund-id} we provide more general 
computation rules. Note that $v \mapsto \Psi_\Upsilon (v)$ is a replacement of 
the quadratic function $v \mapsto \Gamma(v)$ and  $z \mapsto \Upsilon(z) = e^z 
- z - 1$ replaces the square function in the 
expression $|\nabla \log u|^2$.

\medskip

One of our main results is a Li-Yau type inequality for positive solutions $u$ 
to the heat equation on a 
finite connected undirected graph $G=(V,E)$:

\begin{theorem} \label{theo:finite-graphs}
Assume that $G$ satisfies CD($F$;0) and let 
$\varphi$ be the relaxation function 
associated with the CD-function $F$. 
Suppose that $u:[0,\infty)\times V\rightarrow (0,\infty)$ is a solution of the 
heat equation
on $G$. Then
\begin{align} \label{eq:LYfinite}
-\Delta (\log u) (t,x) \le \varphi(t)\quad \mbox{in}\;(0,\infty)\times V,
\end{align}
and thus
\begin{align} \label{eq:diffHarnack}
\Psi_\Upsilon(\log u)(t,x) - \partial_t (\log u)(t,x) \leq \varphi(t)\quad 
\mbox{in}\;(0,\infty)\times V.
\end{align}
\end{theorem}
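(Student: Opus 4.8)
The plan is to prove the first estimate \eqref{eq:LYfinite} and then to deduce \eqref{eq:diffHarnack} for free. Indeed, writing $v := \log u$ and using that $u$ solves the heat equation, the discrete chain-rule identity \eqref{eq:log-equality} gives
\begin{align*}
\Delta v = \frac{\Delta u}{u} - \Psi_\Upsilon(v) = \frac{\partial_t u}{u} - \Psi_\Upsilon(v) = \partial_t v - \Psi_\Upsilon(v),
\end{align*}
so that $-\Delta v = \Psi_\Upsilon(v) - \partial_t v$ pointwise. Hence the two displayed inequalities are literally the same statement, and it suffices to bound the quantity $P := -\Delta v = -\Delta(\log u)$ from above by $\varphi(t)$.

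Next I would derive a scalar differential inequality for $P$. The identity above also shows that $v$ solves $\partial_t v = \Delta v + \Psi_\Upsilon(v)$; applying $-\Delta$ and using that $\Delta$ commutes with $\partial_t$ (legitimate since $V$ is finite, so $u(t,\cdot)$ is a smooth-in-$t$ solution of a linear ODE system), one obtains
\begin{align*}
(\partial_t - \Delta) P = -\Delta \Psi_\Upsilon(v).
\end{align*}
The right-hand side is the discrete analogue of $-\Delta |\nabla v|^2$, and I would rewrite it by means of the computation rules of \autoref{sec:fund-id} as a $\Gamma_2$-type expression in $v$ (plus lower-order terms that recombine into $\partial_t v$ and $P$). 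At this point the hypothesis CD($F$;0) enters: it furnishes exactly the lower bound on this $\Gamma_2$-type quantity that is needed to close the estimate, producing a pointwise inequality of the form $(\partial_t - \Delta)P \le -F(P)$, the sign and normalization being fixed so as to match the ODE that defines the relaxation function.

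Finally I would run a parabolic maximum principle on the finite graph. Set $m(t) := \max_{x \in V} P(t,x)$, which is attained because $V$ is finite. At a spatial maximizer $x_t$ one has $\Delta P(t,x_t) \le 0$, so the differential inequality yields $m'(t) \le -F(m(t))$, with the derivative read as a right/Dini derivative; the non-smoothness of a finite maximum is routine to handle. Since the relaxation function $\varphi$ is by definition the (maximal) solution of $\varphi' = -F(\varphi)$ with $\varphi(0+) = +\infty$, an ODE comparison argument gives $m(t) \le \varphi(t)$ for all $t > 0$, which is precisely \eqref{eq:LYfinite}.

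I expect the main obstacle to be the middle step: verifying that $-\Delta \Psi_\Upsilon(v)$, after the discrete computation rules are applied, is controlled from below exactly by the CD-function $F$ evaluated at $P$, and that the resulting nonlinearity is consistent with the ODE defining $\varphi$. Getting the constants and the role of $\Upsilon$ (which here replaces the square) correct is the delicate point; once the inequality $(\partial_t - \Delta)P \le -F(P)$ is in hand, the maximum principle together with the comparison against the singular initial value $\varphi(0+) = +\infty$ is comparatively standard.
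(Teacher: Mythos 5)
Your proposal is correct in substance and follows the same backbone as the paper's proof: reduce to bounding $P=-\Delta \log u=L(\log u)$, derive its evolution equation, invoke CD($F$;0) at a spatial maximizer, and compare with the relaxation ODE. One claim needs correcting, though it happens to be harmless to your argument as structured. CD($F$;0) does \emph{not} produce a pointwise inequality $(\partial_t-\Delta)P\le -F(P)$ on all of $(0,\infty)\times V$: by \autoref{def:CD} it is a \emph{conditional} hypothesis, asserting $\mathcal{C}_0(v)(x)=\Delta\Psi_{\Upsilon'}(v)(x)\ge F\big(Lv(x)\big)$ only at points $x$ where $Lv$ is positive and attains a local maximum over the neighbors of $x$ (note also that $F(P)$ is not even defined where $P<0$, since $F$ lives on $[0,\infty)$). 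Your maximum-principle step evaluates the inequality precisely at such points, so the proof survives; but the justification should be ``CD applies at the maximizer,'' not ``CD gives a pointwise bound which I then evaluate at the maximizer.'' Moreover, no recombination of lower-order terms is needed: since $\Psi_{\Upsilon'}(v)=\Psi_\Upsilon(v)+\Delta v$, the evolution equation is exactly $\partial_t P=-\Delta\Psi_{\Upsilon'}(v)=-\mathcal{C}_0(v)$, i.e.\ $-\partial_t P$ is the very quantity that CD bounds; the $\Delta P$ terms cancel identically, and your step ``$\Delta P(t,x_t)\le 0$ at the maximizer'' is superfluous.

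Where you genuinely differ from the paper is the comparison step. You run an ODE comparison for $m(t)=\max_{x\in V} P(t,x)$, which requires handling Dini derivatives of a finite maximum, the restriction that $D^+m\le -F(m)$ is only available while $m(t)>0$ (when $m(t)\le 0$ the claim is trivial since $\varphi>0$), and a comparison against the singular datum $\varphi(0+)=\infty$ using that $m$ is bounded near $t=0$. The paper instead normalizes: it considers $G=Lv/\varphi$, extended by $0$ at $t=0$ (continuous there precisely because $\varphi(0+)=\infty$), takes a global space-time maximum of $G$ on $[0,t_1]\times V$, and at that point combines $\partial_t G\ge 0$, the CD inequality, and $-\dot{\varphi}=F(\varphi)$ with the strict monotonicity of $F(x)/x$ to conclude $G\le 1$ in a single algebraic step. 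The normalization buys exactly what your route must handle by hand: the singular initial condition of $\varphi$ and the nonsmoothness of the running maximum. Both implementations are valid.
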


The condition CD($F$;0) is formulated 
locally at each point $x \in V$ and involves only neighbors of second order, 
cf. \autoref{def:CD}. We use the abbreviation CD as in ``curvature 
dimension'' although the relation to classical CD-conditions like $\Gamma_2(f) 
\geq \frac{1}{d}(\Delta f)^2$ for all $f$ or more recent related conditions from 
\cite{BHLLMY15}, \cite{Mue14} (like e.g.\ the so-called exponential curvature dimension inequality CDE($n$,0)) has not yet been established. The examples from 
\autoref{sec:curv-ineq} suggest that there is a close relation between 
CD($F$;0) and other conditions from the literature. Note that $F:[0, \infty) 
\to [0, \infty)$ is a continuous function such that $F(0)=0$, $F(x)/x$ is 
strictly 
increasing, and $1/F$ is integrable at $+\infty$. The relaxation function 
$\varphi$ is the unique positive solution to $\dot{\varphi}(t)+F(\varphi(t))=0$ 
on $(0,\infty)$ with $\varphi(0+)=\infty$, cf. 
\autoref{lem:Fprop}. In some examples, we can compute the relaxation function 
$\varphi$  explicitly. For example, $\varphi(t) = -\log \big(\tanh 
t\big)$ for the unweighted two-point graph. 

\medskip

The differential Harnack inequality \eqref{eq:diffHarnack} implies pointwise 
bounds on the function $u$ itself by a chaining argument, cf. \cite{LiYa86} and 
\cite{BHLLMY15} in the case of graphs. We apply the same strategy. In the case 
of finite graphs, our Harnack inequality then reads as follows:

\begin{theorem} \label{theo:Harnack_ineq_finite}
Let $G$ be a finite graph satisfying the assumption of 
\autoref{theo:finite-graphs}. Assume $u:[0,\infty)\times V\rightarrow 
(0,\infty)$ is a solution of the heat equation
on $G$ and $0<t_1<t_2$ and $x_1, x_2\in V$. Then 

\begin{align} \label{eq:Harnack_ineq_finite}
u(t_1,x_1)\le u(t_2,x_2) \exp\Big(\int_{t_1}^{t_2} 
\varphi(t)\,dt \Big) \exp\Big(\frac{2\mu_{max} 
d(x_1,x_2)^2}{w_{min} (t_2-t_1)} \Big)\,.
\end{align}
\end{theorem}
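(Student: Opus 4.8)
The plan is to adapt the classical Li--Yau chaining argument to the discrete setting, integrating the differential Harnack inequality \eqref{eq:diffHarnack} along a space--time path joining $(t_1,x_1)$ to $(t_2,x_2)$. First I would fix a shortest path $x_1=v_0\sim v_1\sim\cdots\sim v_n=x_2$ in $G$ realizing $n=d(x_1,x_2)$, and partition $[t_1,t_2]$ into $n$ equal subintervals $[s_{k-1},s_k]$ of length $h=(t_2-t_1)/n$, where $s_k=t_1+kh$. Writing $f=\log u$ and telescoping, I would express $f(t_1,x_1)-f(t_2,x_2)$ as a sum over $k$ of a \emph{time increment} at a fixed vertex over $[s_{k-1},s_k]$ and a \emph{spatial increment}, namely the difference of $f$ across the edge $v_{k-1}v_k$ at a fixed time. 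The decisive bookkeeping choice is to spend the $k$-th interval at the destination vertex $v_k$, so that the downhill drop into $v_k$ is paired with a reservoir based at $v_k$.

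For the time increments I would use \eqref{eq:diffHarnack} directly in the form $-\partial_t f\le\varphi-\Psi_\Upsilon(f)$. Summing the $\varphi$-contribution over $k$ produces the factor $\exp\big(\int_{t_1}^{t_2}\varphi\big)$, while the nonnegative quantity $\Psi_\Upsilon(f)\ge 0$ is retained as a reservoir. Since $\Psi_\Upsilon(f)(t,v_k)\ge\frac{w_{min}}{\mu_{max}}\,\Upsilon\big(f(t,v_{k-1})-f(t,v_k)\big)$, keeping only the single term of the defining sum that corresponds to the source neighbour $v_{k-1}$ supplies exactly the weight needed, exploiting that $\Upsilon(z)=e^z-z-1$ grows \emph{exponentially} for large positive $z$, which is precisely the regime of a large drop into $v_k$. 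To absorb each spatial increment I would then prove and apply the elementary inequality $z\le\lambda\,\Upsilon(z)+2/\lambda$ for all $z\in\R$ and $\lambda>0$; this is equivalent to $\sup_{z}\big(z-\lambda\,\Upsilon(z)\big)=(1+\lambda)\log\tfrac{1+\lambda}{\lambda}-1\le\tfrac{2}{\lambda}$. Choosing $\lambda=\frac{w_{min}}{\mu_{max}}\,h$ and offsetting the $\lambda\Upsilon(z)$ term against the reservoir, the residual cost of the $k$-th edge is $2/\lambda=\frac{2\mu_{max}}{w_{min}\,h}$; summing over the $n=d(x_1,x_2)$ edges and inserting $h=(t_2-t_1)/n$ yields the exponent $\frac{2\mu_{max}\,d(x_1,x_2)^2}{w_{min}(t_2-t_1)}$. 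Exponentiating the resulting upper bound on $f(t_1,x_1)-f(t_2,x_2)$ gives \eqref{eq:Harnack_ineq_finite}.

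The main obstacle is the discrete substitute for Young's inequality together with the accompanying matching. In the Riemannian proof the gradient-squared reservoir and the cross term are evaluated at the same point, whereas here the reservoir is accumulated as a \emph{time integral} of $\Upsilon$ along $[s_{k-1},s_k]$ while each spatial increment is a \emph{pointwise} edge-difference, and $\Upsilon$ is strongly asymmetric (linear as $z\to-\infty$, exponential as $z\to+\infty$). Making the pairing rigorous, so that the asymmetric weight dominates each downhill drop at the correct base vertex and time, is the delicate point; this is where the placement of the interval at the destination vertex and the pointwise form of \eqref{eq:diffHarnack} are essential. Verifying the sharp inequality $\sup_z\big(z-\lambda\Upsilon(z)\big)\le 2/\lambda$ and confirming that the constant $2$ survives the matching after summation is the core of the estimate; everything else is telescoping and bookkeeping.
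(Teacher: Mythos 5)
Your chaining skeleton is exactly that of the paper's proof (\autoref{theo:ersteHarnack}, following \cite{BHLLMY15}): shortest path, equal time subintervals spent at the destination vertex, retaining only the one term of $\Psi_\Upsilon$ corresponding to the source neighbour, and the arithmetic $n\cdot\tfrac{2}{\gamma h}=\tfrac{2n^2}{\gamma(t_2-t_1)}$ with $\gamma=w_{min}/\mu_{max}$. Your Legendre-type computation $\sup_z\big(z-\lambda\Upsilon(z)\big)=(1+\lambda)\log\tfrac{1+\lambda}{\lambda}-1\le\tfrac{2}{\lambda}$ is also correct. The genuine gap is the step ``offsetting the $\lambda\Upsilon(z)$ term against the reservoir''. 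With $\lambda=\gamma h$ and the jump fixed at the left endpoint $s_{k-1}$, this offset requires $h\,\Upsilon\big(\delta_k(s_{k-1})\big)\le\int_{s_{k-1}}^{s_k}\Upsilon\big(\delta_k(t)\big)\,dt$, i.e.\ that the value of $\Upsilon(\delta_k)$ at the left endpoint is dominated by its average over the subinterval. This fails precisely in the regime you need to control: when the drop $\delta_k(s_{k-1})=M$ is large, $\delta_k$ can collapse on a time scale of order $e^{-M}$, so the time-integrated reservoir is of order $M$, not $h\,\Upsilon(M)\approx h\,e^{M}$. Indeed, on the two-point graph take $f_i=\log u(\cdot,x_i)$ with $f_1'=\Upsilon(-\delta)$ and $f_2'=\Upsilon(\delta)+K$, where $\delta=f_1-f_2$, $\delta(t_1)=M$, and $K=e^{2M}$ on a short initial interval, $K=0$ afterwards; this function is $C^1$, satisfies the differential Harnack estimate \eqref{eq:diffHarnack} (which is all your argument uses), yet $\delta(t_1)-\gamma\int_{t_1}^{t_2}\Upsilon(\delta(t))\,dt\approx M$ can exceed $2/(\gamma(t_2-t_1))$ by an arbitrary amount. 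Hence no pointwise inequality applied at a \emph{fixed} jump time can close the argument.

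The missing idea is the adaptive choice of the jump time, and this is how the paper proceeds: on each subinterval one crosses the edge at the time $s$ at which $\omega(t)=\delta_k(t)-\gamma\int_t^{s_k}\Upsilon\big(\delta_k(\tau)\big)\,d\tau$ attains its minimum. If $\omega(s)<0$ the edge costs nothing; if $\omega(s)\ge 0$, then $\delta_k\ge\omega\ge 0$ on the whole subinterval, so one may replace $\Upsilon$ by the quadratic lower bound $\Upsilon(z)\ge z^2/2$ for $z\ge 0$ and invoke Lemma 5.5 of \cite{BHLLMY15} (an ODE comparison for $\min_s\big[\delta(s)-c\int_s^{t_2}\delta(t)^2\,dt\big]$), which yields exactly the per-edge cost $2/(\gamma h)$ you want. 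Note that after this reduction the exponential growth of $\Upsilon$ on the positive axis plays no role at all --- only $\Upsilon(z)\ge z^2/2$ for $z\ge 0$ is used --- so the heuristic in your proposal that the exponential regime of $\Upsilon$ saves the matching is not what makes the proof work; the variational choice of $s$ is. With that replacement, your outline becomes the paper's proof.
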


\begin{remark} 
For the sake of this introduction, we choose to 
present our results, \autoref{theo:finite-graphs} and 
\autoref{theo:Harnack_ineq_finite}, for the case of finite graphs. Versions for 
general locally finite connected graphs are given in 
\autoref{sec:li-yau-infinite} and \autoref{sec:harnack}.
\end{remark}

\begin{remark} 
Note that in all examples studied in this work, the relaxation function 
$\varphi$ turns out to be integrable at $t=0$. Thus, it is possible to consider 
the case $t_1=0$ in \eqref{eq:Harnack_ineq_finite}. This is in contrast to the 
Harnack inequality on manifolds. 
\end{remark}

\medskip

Let us comment on related results in the literature. One approach to Li-Yau 
type estimates on graphs is given in 
\cite{BHLLMY15} and several subsequent
works. Since the results of the present work are closely related, let us 
explain the approach of \cite{BHLLMY15}. The authors establish the following 
estimate
\begin{align}\label{eq:harvard-diffHarnack}
\frac{\Gamma(\sqrt{u})(t,x)}{u(t,x)}- \frac{\partial_t 
(\sqrt{u})(t,x)}{\sqrt{u}(t,x)}
\leq \frac{n}{2t} \quad (t > 0, x \in V)
\end{align}
for positive solutions $u$ to the heat equation on $G$, which should be 
contrasted with \eqref{eq:diffHarnack-classical} and \eqref{eq:diffHarnack}. A 
significant difference between this result and our estimate is that we estimate 
the term $\Psi_\Upsilon(\log u)$, which, in some sense, is the 
correct discrete replacement for $|\nabla \log u|^2.$ As a
consequence of \eqref{eq:harvard-diffHarnack}, the authors obtain a 
Harnack inequality 
\begin{align}\label{eq:harvard-harnack-Zd}
u(t_1,x_1) \leq u(t_2,x_2) \left(\frac{t_2}{t_1} \right)^n \exp \left(\frac{4D 
\rho^2(x_1,x_2)}{t_2-t_1}\right)
\quad (0 < t_1 < t_2, x_i \in V),
\end{align}
where $D$ equals the maximal degree of a vertex in $G$. Note that 
$\mathbb{Z}^d$ 
satisfies CDE($n$,0) with $n=2d$. Thus, the exponent $n$ in 
$\left(\frac{t_2}{t_1} \right)^n$ is off by a factor $4$ from what one would 
expect, based on the corresponding estimates in the Euclidean space. In 
\cite{BHLLMY15}, the authors study graphs which satisfy the exponential 
curvature dimension inequality CDE($n$,0). 

\medskip

Computation rules and estimates for the logarithm of positive 
solutions appear also in \cite{Mue14}. The main aim of \cite{Mue14} is to 
establish generalized curvature dimension inequalities and to prove a Li-Yau 
inequality on finite graphs. In this way, \cite{Mue14} enhances some of the 
results of 
\cite{BHLLMY15}, e.g., the estimate \eqref{eq:harvard-harnack-Zd}. The 
relation between the conditions  (curvature dimension inequalities) of 
\cite{Mue14} and \cite{BHLLMY15} is studied in \cite{Mue17}.

\medskip

The main difference between the present work and the approach in 
\cite{BHLLMY15}, \cite{Mue14} and other existing works is that we do not 
restrict ourselves to expressions resp. functions of the form $t \mapsto 
c t^{-1}$ in the differential Harnack 
inequality. In this respect, \eqref{eq:diffHarnack} 
and \eqref{eq:harvard-diffHarnack} are rather different. As can be seen from 
\eqref{eq:Harnack_ineq_finite}, the function $\varphi$ plays an important role 
in the pointwise estimate for the positive solution $u$. In light of 
\eqref{eq:Harnack-classical} the estimate \eqref{eq:harvard-harnack-Zd} looks 
natural but the behavior for $t_1 \to 0+$ seems far from being optimal. Note 
that the 
Laplace operator, when defined on a graph with bounded degree, is a bounded 
operator. Thus, one 
should expect a robust estimate for all $t_1 > 0$. We believe that an optimal 
result requires the time-dependence to be captured by a function $\varphi$ 
depending on the graph under consideration. This is why, in our 
approach, $\varphi$ is linked to the graph via the CD-function $F$ from the 
condition CD($F$;0). \\
Another difference between the present work and \cite{BHLLMY15} concerns the 
analysis on infinite graphs. Infinite graphs are not studied in \cite{Mue14}. 
As in the case of Riemannian manifolds, it is necessary to decompose 
$\Psi_\Upsilon(\log u)$ into two parts in order to apply successfully cut-off 
functions. We develop a systematic approach for this procedure, which we call 
$\alpha$-calculus, where $\alpha \in [0,1)$. The special case $\alpha = 
\frac12$ is strongly connected to the methods of \cite{BHLLMY15}.

\medskip

It is worth mentioning that, in general, Ricci curvature bounds play an 
important role. If the Ricci curvature of a Riemannian manifold is 
bounded from below by a strictly positive number, then, in addition to the 
Harnack inequality, several properties can be established. Isoperimetric 
inequalities follow as well as lower bounds for the eigenvalues of the 
Laplacian. There have been several attempts to develop a notion of Ricci  
curvature bounds  for  discrete or, more generally, for non-smooth spaces 
starting from the theory of Bakry and Emery \cite{BaEm85}, which is based 
on properties of the corresponding semigroup. For recent developments in this 
direction, we refer to \cite{HLLY14}, \cite{JoLi14},  
\cite{HJL15}, \cite{LaMc16}, \cite{CLP16}, and \cite{KKRT16}. Note that the 
last mentioned work contains several concrete examples and 
computations. Following the theory of Lott, Villani, and Sturm for metric 
measure spaces, techniques from optimal transport have been applied, cf. 
\cite{Oll09}, \cite{BoSt09}, \cite{Maa11}, \cite{ErMa12}, \cite{Mie13}, 
\cite{EMP15}, or the nice survey in \cite{Oll10}. Since, in the present work, 
neither semigroups nor optimal transport are used, we omit a further discussion 
here.

\medskip

The article is organized as follows: In \autoref{sec:fund-id} we study 
computation rules for difference operators, in particular a discrete version of 
the chain rule. It 
turns out, that it is possible to obtain nice formulas for expressions of the 
form of $\Delta(\log u)$. In \autoref{sec:curv-ineq} we introduce a new notion 
of curvature inequality, which is parametrized by a CD-function $F$. This 
function is computed explicitly for several examples of graphs in 
\autoref{sec:curv-ineq}. \autoref{sec:li-yau-finite} contains the proof of the 
Li-Yau 
estimate on finite graphs. In \autoref{sec:li-yau-infinite} we explain 
how to obtain a similar result on infinite Ricci-flat graphs. In the 
special case of the lattice $\Z$ resp. the sequence $(\tau \Z)_{\tau>0}$, we 
show in \autoref{sec:example-Z} how to recover the classical 
sharp Li-Yau estimate on $\R$ in the limit $\tau \to 0+$. Finally, in 
\autoref{sec:harnack} we apply the chaining argument from 
\cite{BHLLMY15} and derive a Harnack inequality from the Li-Yau estimate. We 
prove the result for locally finite graphs thus establishing 
\autoref{theo:Harnack_ineq_finite}.

%**************************************************************************
\section{Fundamental identities}\label{sec:fund-id}
%**************************************************************************

This section is concerned with a basic identity, which can be viewed as a kind 
of chain rule for the operator $\Delta$. We refer to it as the {\em 
fundamental identity}. Given a function $H:\iR\to \iR$, we also define the 
operator $\Psi_H:\iR^V\to \iR^V $ by
\begin{align} \label{def:Psi}
\Psi_H (u)(x)=\,\frac{1}{\mu(x)}\sum_{y\sim x}w_{xy} H\big(u(y)-u(x)\big),\quad 
x\in V,\;u\in \iR^V.
\end{align}
Observe that in case of the function $H(y)=\frac{1}{2}y^2$ we have $\Psi_H( 
u)=\Gamma(u)$.

\begin{lemma} \label{lem:firstFI}
Let $\Omega\subset \iR$ be an open set and $u\in \iR^V$ such that the range of 
$u$ is contained in $\Omega$.
Let further $H\in C^1(\Omega;\iR)$. Then there holds
\begin{align} \label{FI1}
\Delta\big( H(u(x)\big)=H'(u(x)) \Delta u(x)+\frac{1}{\mu(x)}\sum_{y\sim x} 
w_{xy} \Lambda_H\big(u(y),u(x)\big),\quad x\in V,
\end{align}
where we set
\begin{align}\label{eq:def_Lambda}
\Lambda_H(w,z):=H(w)-H(z)-H'(z)(w-z),\quad w,z\in \iR.
\end{align}
\end{lemma}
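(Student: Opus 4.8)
The plan is to prove the identity by a direct computation starting from the definition \eqref{genlaplace} of $\Delta$, using nothing more than an add-and-subtract manipulation that isolates the first-order (linear) part of each difference $H(u(y))-H(u(x))$. The function $\Lambda_H$ is built precisely so that it records the Taylor remainder of $H$ at the base point $u(x)$ after the linear term $H'(u(x))\big(u(y)-u(x)\big)$ has been subtracted; this is the structural hint dictating how the splitting should go.

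First I would write out $\Delta\big(H(u)\big)(x)$ from the definition, obtaining $\frac{1}{\mu(x)}\sum_{y\sim x} w_{xy}\big(H(u(y))-H(u(x))\big)$. Then, using the definition \eqref{eq:def_Lambda} of $\Lambda_H$, I rewrite each summand as
\begin{align*}
H(u(y))-H(u(x)) = H'(u(x))\big(u(y)-u(x)\big) + \Lambda_H\big(u(y),u(x)\big),
\end{align*}
which is just a rearrangement of \eqref{eq:def_Lambda}. Substituting this into the sum and splitting it into two pieces yields one sum carrying the linear factor and one carrying $\Lambda_H$.

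The decisive observation is that $H'(u(x))$ does not depend on the summation index $y$, so it can be pulled out of the first sum; what remains, $\frac{1}{\mu(x)}\sum_{y\sim x} w_{xy}\big(u(y)-u(x)\big)$, is by definition exactly $\Delta u(x)$. Combining this with the untouched $\Lambda_H$-sum reproduces \eqref{FI1} verbatim.

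There is no substantial obstacle here: the argument is purely algebraic and amounts to a linearization of $H$ around $u(x)$. The only points requiring care are the standing hypotheses that make every expression meaningful—local finiteness of $G$ ensures the sum over neighbors $y\sim x$ is finite, while $H\in C^1(\Omega;\iR)$ together with the assumption that the range of $u$ lies in $\Omega$ guarantees that $H'(u(x))$ and each $\Lambda_H(u(y),u(x))$ are well-defined. Thus the real content of the lemma is not the computation itself but the identification of $\Lambda_H$ as the natural discrete ``chain-rule defect,'' which is what will make the later specializations (e.g.\ $H=\log$ leading to \eqref{eq:log-equality}) transparent.
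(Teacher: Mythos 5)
Your proof is correct and is essentially identical to the paper's: both rewrite each difference $H(u(y))-H(u(x))$ as the linear term $H'(u(x))\big(u(y)-u(x)\big)$ plus the remainder $\Lambda_H\big(u(y),u(x)\big)$, then multiply by $w_{xy}/\mu(x)$ and sum over $y\sim x$. No further comment is needed.
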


\begin{proof}
For each neighbor $y$ of $x$ we have
\begin{align}
H(u(y))-&H(u(x)) =H'(u(x))\big( u(y)-u(x)\big) \nonumber\\
&+\Big(H(u(y))-H(u(x))-H'(u(x))\big[
u(y)-u(x)\big]\Big).   \label{fund1}
\end{align}
Multiplying \eqref{fund1} by the weight $w_{xy}/\mu(x)$ and summing over all  
$y\sim x$ yields the assertion.
\end{proof}

Note that the quantity $\Lambda_H(w,z)$ resembles the Bregman distance from 
convex analysis. Identity \eqref{FI1} is the analogue in the graph setting of 
the classical 
rule
\[
\Delta H(u)=H'(u)\Delta u+  H''(u)|\nabla u|^2 \qquad (u \in C^2(\iR^d))\,. 
\]
See also \cite{Zac13} for an 
application of a similar identity in the context of evolution 
equations with fractional time derivatives. Note that in case of a convex 
function $H$ we obtain $\Lambda_H\ge 0$ and thus identity \eqref{FI1} yields the
inequality $\Delta H(u)\ge H'(u) \Delta u$. Let us look at some examples.

\begin{example}\label{exa:fund-id_square}
Take $\Omega=\iR$ and $H(y)=\frac{1}{2}y^2$. Then
\[
\Lambda_H(w,z)=\frac{1}{2}w^2-\frac{1}{2}z^2-z(w-z)= \frac{1}{2}(w-z)^2
\]
and thus we get for any $u\in \iR^V$ and $x\in V$
\begin{align*}
\frac{1}{2}\Delta(u^2)(x) &=u(x)\Delta u(x)+\frac{1}{2\mu(x)}\,\sum_{y\sim x} 
w_{xy} \big(u(y)-u(x)\big)^2.
\end{align*}
Hence
\begin{align} \label{gammaform}
\Delta(u^2) =2u\Delta u+2\Gamma(u).
\end{align}
\end{example}

\begin{example}\label{exa:fund-id_square-root}
Take $\Omega=(0,\infty)$ and consider the function $H(y)=\sqrt{y}$, $y>0$. Then
\begin{align*}
\Lambda_H(w,z)& =\sqrt{w}-\sqrt{z}-\frac{1}{2\sqrt{z}}\,(w-z)
=\,-\,\frac{1}{2\sqrt{z}}\big(\sqrt{w}-\sqrt{z}\big)^2.
\end{align*}
Assuming that $u\in \iR^V$ is positive, the fundamental identity then gives
\[
\Delta(\sqrt{u})(x)=
\frac{1}{2\sqrt{u(x)}}\,\Delta u(x)-\frac{1}{2\mu(x)\sqrt{u(x)}}
\sum_{y\sim x} w_{xy}\big(\sqrt{u(y)}-\sqrt{u(x)}\big)^2.
\]
Multiplying by $2\sqrt{u}$ we obtain
\begin{align} \label{sqrtformula}
2\sqrt{u}\Delta \sqrt{u}=\Delta u-2\Gamma(\sqrt{u}).
\end{align}
Relation \eqref{sqrtformula} is the key identity for the square root approach 
used in \cite{BHLLMY15}. Observe that \eqref{sqrtformula} is also an immediate 
consequence
of formula \eqref{gammaform}; just substitute $v=\sqrt{u}$ in 
\eqref{sqrtformula} to see this.
\end{example}

\begin{example}\label{exa:fund-id_log}
Take  $\Omega=(0,\infty)$ and $H(y)=-\log y$, $y>0$. Then
\begin{align*}
\Lambda_H(w,z)& =-\log w+\log z+\frac{1}{z}\,(w-z)\\
& = \log \big(\frac{z}{w}\big)+\frac{w}{z}-1\\
& = \Upsilon(\log w-\log z),
\end{align*}
where
\[
\Upsilon(y):=e^y-1-y=\sum_{j=2}^\infty \frac{y^j}{j!},\quad y\in \iR.
\]
Assuming that $u\in \iR^V$ is positive, the fundamental identity yields
\[
-\Delta(\log u)(x)=-\frac{1}{u(x)}\Delta u(x)
+\frac{1}{\mu(x)}\sum_{y\sim x}w_{xy} 
\Upsilon\big(\log u(y)-\log u(x)\big).
\]
This shows the important relation
\begin{align} \label{palmerel}
\frac{1}{u}\Delta u=\Delta (\log u)+\Psi_\Upsilon (\log u),
\end{align}
which is remarkable since the right-hand side is formulated using only terms 
involving the function $\log u$. Replacing the positive function $u$ in 
\eqref{palmerel} by $u^\alpha$ with 
$\alpha>0$ yields the identity
\begin{align} \label{palmalpha}
\frac{\Delta( u^\alpha)}{\alpha u^\alpha}=\Delta (\log 
u)+\frac{1}{\alpha}\,\Psi_{\Upsilon_\alpha} (\log u),
\end{align}
where we set $\Upsilon_\alpha(y)=\Upsilon(\alpha y)$.
\end{example}

\begin{lemma} \label{alphalemma}
Let $\alpha\in (0,1)$. The function $g_\alpha:\iR \to \iR$ defined by
\[
g_\alpha(z)=\Upsilon(z)-\frac{1}{\alpha}\,\Upsilon(\alpha z),\quad z\in \iR,
\]
is nonnegative on $\iR$ and satisfies
\begin{align} \label{quadest}
g_\alpha(z)\ge \frac{1-\alpha}{2}\,z^2,\quad z\ge 0.
\end{align}
Moreover, we have the representation
\[
g_\alpha(z)=h_\alpha\big(e^z\big),\quad z\in \iR,
\]
where
\[
h_\alpha(z)=z-\frac{1}{\alpha}z^\alpha+\frac{1-\alpha}{\alpha},\quad z\ge 0.
\]
In particular, in case $\alpha=\frac{1}{2}$, there holds 
\[
g_{1/2}(z)=\big(e^{z/2}-1\big)^2,\quad z\in \iR.
\]
\end{lemma}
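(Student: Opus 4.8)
The plan is to reduce every assertion to elementary properties of the auxiliary function $h_\alpha$, and to use the power series of $\Upsilon$ for the quadratic lower bound. First I would establish the representation by a direct computation. Expanding the definition of $\Upsilon$ and noting that the linear terms in $z$ cancel gives
\[
g_\alpha(z) = (e^z - 1 - z) - \frac{1}{\alpha}\big(e^{\alpha z} - 1 - \alpha z\big) = e^z - \frac{1}{\alpha}e^{\alpha z} + \frac{1-\alpha}{\alpha}.
\]
Substituting $w = e^z > 0$ and using $e^{\alpha z} = w^\alpha$ immediately yields $g_\alpha(z) = h_\alpha(e^z)$. In particular, nonnegativity of $g_\alpha$ on $\mathbb{R}$ is equivalent to nonnegativity of $h_\alpha$ on $(0,\infty)$.

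Next I would prove $h_\alpha \ge 0$ on $(0,\infty)$ by a convexity argument. Differentiating gives $h_\alpha'(w) = 1 - w^{\alpha-1}$ and $h_\alpha''(w) = (1-\alpha)w^{\alpha-2} > 0$ for $w > 0$ (here I use $\alpha < 1$), so $h_\alpha$ is strictly convex with a unique critical point at $w = 1$. A one-line computation shows $h_\alpha(1) = 0$, so this critical point is the global minimum and $h_\alpha \ge 0$ follows. Equivalently, the inequality $h_\alpha(w) \ge 0$ is nothing but the weighted AM--GM inequality $\alpha w + (1-\alpha) \ge w^\alpha$. This settles nonnegativity on all of $\mathbb{R}$.

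For the quadratic estimate I would switch to the series expansion. Since $\Upsilon(y) = \sum_{j=2}^\infty y^j/j!$, one has $\frac{1}{\alpha}\Upsilon(\alpha z) = \sum_{j=2}^\infty \alpha^{j-1}z^j/j!$, whence
\[
g_\alpha(z) = \sum_{j=2}^\infty \frac{1-\alpha^{j-1}}{j!}\,z^j.
\]
For $z \ge 0$ every coefficient is nonnegative, because $0 < \alpha < 1$ forces $\alpha^{j-1} < 1$ for $j \ge 2$; discarding all but the $j=2$ term then yields $g_\alpha(z) \ge \frac{1-\alpha}{2}z^2$, which is \eqref{quadest}. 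Finally, for $\alpha = \tfrac12$ the representation reads $h_{1/2}(w) = w - 2\sqrt{w} + 1 = (\sqrt{w}-1)^2$, so that $g_{1/2}(z) = (e^{z/2}-1)^2$.

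The individual computations are all routine; the only genuine methodological point is that full nonnegativity on $\mathbb{R}$ must be extracted from the $h_\alpha$-representation (via convexity or AM--GM) rather than from the series, since for $z < 0$ the series has terms of alternating sign and the termwise bound breaks down. This is exactly why the quadratic lower bound is stated only for $z \ge 0$.
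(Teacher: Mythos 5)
Your proposal is correct, and every computation checks out; the overall skeleton (representation first, then nonnegativity, then the quadratic bound, then the case $\alpha=\tfrac12$) matches the paper, but the two key estimates are carried out by different means. For nonnegativity, the paper never passes to $h_\alpha$: it differentiates $g_\alpha$ directly, noting $g_\alpha'(z)=e^z-e^{\alpha z}$, so that $g_\alpha$ decreases on $(-\infty,0]$, increases on $[0,\infty)$, and has its global minimum $g_\alpha(0)=0$; your route goes through strict convexity of $h_\alpha$ on $(0,\infty)$ with minimum at $w=1$ (equivalently, weighted AM--GM). These are the same fact seen before and after the substitution $w=e^z$, so the difference is cosmetic. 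The genuine divergence is in the proof of \eqref{quadest}: the paper applies Taylor's theorem with Lagrange remainder, $g_\alpha(z)=\tfrac12 g_\alpha''(\xi)z^2$ for some $\xi\in(0,z)$, and then uses that $g_\alpha''(z)=e^z-\alpha e^{\alpha z}$ is increasing on $[0,\infty)$ with $g_\alpha''(0)=1-\alpha$; you instead expand
\begin{align*}
g_\alpha(z)=\sum_{j=2}^\infty \frac{1-\alpha^{j-1}}{j!}\,z^j
\end{align*}
and discard the nonnegative tail for $z\ge 0$. Your series argument is more elementary (no mean value form of the remainder) and in fact yields slightly more, namely an explicit nonnegative correction term beyond the quadratic; the paper's Taylor argument is shorter to state and reuses the derivative computations already made for nonnegativity. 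Your closing remark---that the series cannot give nonnegativity on all of $\iR$ because the terms alternate in sign for $z<0$, which is why a global argument via $h_\alpha$ (or, in the paper, via monotonicity of $g_\alpha$) is indispensable---is exactly the right methodological point.
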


\begin{proof} 
By definition of $\Upsilon$ we have
\[
g_\alpha(z)=e^z-1-z-\frac{1}{\alpha}\big(e^{\alpha z}-1-\alpha 
z\big)=e^z-\frac{1}{\alpha}e^{\alpha z}
+\frac{1}{\alpha}-1=h_\alpha\big(e^z\big),
\]
and thus
\[
g_\alpha'(z)=e^z-e^{\alpha z},
\]
which shows that $g_\alpha$ is strictly decreasing on $(-\infty,0]$ and 
strictly 
increasing on $[0,\infty)$,
with $g_\alpha(0)=0$ being the global minimum. 
For any $z>0$, Taylor's theorem gives
\begin{align} \label{Taylor1}
g_\alpha(z)=\,\frac{1}{2}\,g_\alpha''(\xi)z^2 
\end{align}
with some $\xi\in (0,z)$. Clearly, the function $g_\alpha''(z)=e^z-\alpha 
e^{\alpha z}$ 
is strictly
increasing on $[0,\infty)$ and $g_\alpha''(0)=1-\alpha$, and so \eqref{Taylor1} 
implies 
the inequality \eqref{quadest}. The last assertion follows from the identity
\[
h_{1/2}(z)=\big(\sqrt{z}-1\big)^2,\quad z\ge 0.
\]
\end{proof}

\medskip

Note that \autoref{alphalemma} also shows that in case $\alpha=1/2$ we have for 
any positive
$u\in \iR^V$ and $x\in V$ that
\begin{align}
\Psi_\Upsilon(\log u)(x)-\frac{1}{\alpha}\Psi_{\Upsilon_\alpha}(\log u)(x) 
& =\Psi_{(\exp(\cdot/2)-1)^2}(\log u)(x) \nonumber\\
& = \,\frac{1}{\mu(x)}\sum_{y\sim x}w_{xy} \Big( e^{(\log u(y)-\log 
u(x))/2}-1\Big)^2\nonumber\\
&  = \,\frac{1}{\mu(x)}\sum_{y\sim x}w_{xy} \Big( 
\frac{\sqrt{u(y)}}{\sqrt{u(x)}}-1\Big)^2\nonumber\\
&  = \frac{2\Gamma(\sqrt{u})(x)}{u(x)}. \label{Gammaroot}
\end{align} 

The aforementioned computation rules directly apply to more general nonlocal 
operators. We formulate this result for the Euclidean space.

\begin{lemma}
 Assume $(\mu(x, \d y))_{x \in \R^d}$ is a family of measures on the Borel sets 
of $\R^d$ satisfying $\mu(x,\{x\}) = 0$ and 
\begin{align*}
 \sup\limits_{x \in \R^d} \int_{\R^d} \big( 1 \wedge |x-y|^2 \big) \mu(x,\d y) 
< \infty \,.
\end{align*}
Assume $H \in C^2(\R^d)$, $x \in \R^d$ and $u: \R^d \to \R$ are such that
\begin{align*}
\mathcal{L}u(x) := \lim\limits_{\varepsilon \to 0} \int_{\R^d \setminus 
B_{\varepsilon}} \big( u(y) - u(x) \big) \mu(x, \d y)  
\end{align*}
and $\mathcal{L}(H\circ u)(x)$ exist. Then
\begin{align} \label{eq:fund-id_nonlocal}
\mathcal{L}\big( H \circ u\big) (x) = H'(u(x)) \mathcal{L} u(x) + 
\lim\limits_{\varepsilon \to 0} \int_{\R^d \setminus B_{\varepsilon}} \Big( 
\Lambda_H\big(u(y),u(x)\big) \Big) \mu(x,\d y) \,,
\end{align}
with $\Lambda_H$ as in \eqref{eq:def_Lambda}.
\end{lemma}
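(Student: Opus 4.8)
\emph{Proof proposal.} The plan is to transfer the proof of \autoref{lem:firstFI} to the nonlocal setting essentially verbatim, the only new ingredient being the justification that the truncated integrals may be split and passed to the limit. Everything rests on the purely algebraic identity
\begin{align*}
H(u(y)) - H(u(x)) = H'(u(x))\big(u(y)-u(x)\big) + \Lambda_H\big(u(y),u(x)\big),
\end{align*}
valid for every $y \in \R^d$, which is nothing but the definition \eqref{eq:def_Lambda} of $\Lambda_H$ evaluated at $w=u(y)$ and $z=u(x)$.

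Fixing $\varepsilon>0$, I would integrate this identity against $\mu(x,\d y)$ over $\R^d\setminus B_\varepsilon$. Note first that the integrability hypothesis $\sup_{x}\int(1\wedge|x-y|^2)\mu(x,\d y)<\infty$ forces $\mu(x,\R^d\setminus B_\varepsilon)<\infty$, since $1\wedge|x-y|^2\ge \min(1,\varepsilon^2)$ on $\R^d\setminus B_\varepsilon$; hence the measure is finite away from $x$, and the truncated integrals defining $\mathcal{L}u(x)$ and $\mathcal{L}(H\circ u)(x)$ are genuine finite integrals for each $\varepsilon>0$. Linearity of the integral then yields
\begin{align*}
\int_{\R^d\setminus B_\varepsilon}&\big((H\circ u)(y)-(H\circ u)(x)\big)\,\mu(x,\d y) \\
&= H'(u(x))\int_{\R^d\setminus B_\varepsilon}\big(u(y)-u(x)\big)\,\mu(x,\d y) + \int_{\R^d\setminus B_\varepsilon}\Lambda_H\big(u(y),u(x)\big)\,\mu(x,\d y),
\end{align*}
where the last integral is automatically finite, being the difference of the first two finite ones.

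Finally I would let $\varepsilon\to 0$. By assumption the left-hand side converges to $\mathcal{L}(H\circ u)(x)$ and the first integral on the right converges to $\mathcal{L}u(x)$; being the difference of two convergent quantities, the remainder integral converges as well, and rearranging produces exactly \eqref{eq:fund-id_nonlocal}. The one genuinely delicate point is this finiteness-and-splitting step, and it is here that the hypothesis that $H$ is $C^2$ plays its role: by Taylor's theorem one has $\Lambda_H(u(y),u(x)) = \tfrac12 H''(\xi)\big(u(y)-u(x)\big)^2$ for some $\xi$ between $u(x)$ and $u(y)$, so that whenever $u$ is sufficiently regular near $x$ the factor $\big(u(y)-u(x)\big)^2$ is controlled by $|x-y|^2$ and the remainder integral converges absolutely against the singular part of $\mu$ in its own right. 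In the stated generality, however, no such regularity of $u$ is required: the existence of both limits $\mathcal{L}u(x)$ and $\mathcal{L}(H\circ u)(x)$ is assumed outright, and the argument above delivers the convergence of the $\Lambda_H$-term for free.
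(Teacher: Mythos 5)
Your proposal is correct and follows exactly the route the paper intends: the paper gives no separate proof, remarking only that ``the proof of this result is as simple as the proof of \autoref{lem:firstFI}'', i.e., apply the pointwise Bregman-type identity defining $\Lambda_H$, integrate over $\R^d\setminus B_\varepsilon$ using linearity, and let $\varepsilon\to 0$, with the $\Lambda_H$-term converging automatically as the difference of the two assumed limits. Your additional remarks on the finiteness of $\mu(x,\cdot)$ away from the diagonal and on the (inessential) role of $H\in C^2$ are accurate refinements of that same argument.
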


The proof of this result is as simple as the proof of 
\autoref{lem:firstFI}. Repeating \autoref{exa:fund-id_square}, 
\autoref{exa:fund-id_square-root} and \autoref{exa:fund-id_log} for the 
case of the fractional Laplace operator $\Delta^s = 
-(-\Delta)^s$ with $0 < s < 1$ in $\R^d$, we obtain the following 
computation rules for sufficiently regular functions $u$:

\begin{align}
 \Delta^s(u^2) &= 2u\Delta u+2\Gamma(u)\,, \\
 \Delta^s \sqrt{u} &= \frac{\Delta^s 
u}{2\sqrt{u}} - \frac{2 \Gamma(\sqrt{u})}{2\sqrt{u}} \,, \\
 \Delta^s \log u &= \frac{\Delta^s u}{u}  - 
 c_{d,s} \int_{\R^d} \frac{\Upsilon \big(\log u (y) - \log u 
(\cdot))}{|y-\cdot|^{d+2s}} \d y \,.
\end{align}
Here, $\Gamma$ denotes the carr\'e du champ operator that corresponds with 
$\Delta^s$, and $\Upsilon$ is as above. The constant $c_{d,s}$ is the 
normalizing constant that appears in the representation of $\Delta^s$ as an 
integrodifferential operator. It satisfies $c_{d,s} \asymp (1-s)s$ for $0 
< s < 1$. For sufficiently regular functions $v$, the following observation 
holds: 
\begin{align*}
  c_{d,s} \int_{\R^d} \frac{\Upsilon \big(v(y) - v (\cdot))}{|y-\cdot|^{d+2s}} 
\d y  \to |\nabla v|^2  \quad \text{ as } s \to 1-\,.
\end{align*}

%************************************************************************
%************************************************************************
\section{Conditions related to curvature-dimension 
inequalities}\label{sec:curv-ineq}
%************************************************************************
%************************************************************************

In this section, we introduce a family of conditions CD$_\alpha$($F$;0) on 
graphs. Here $\alpha \in [0,1)$ is a parameter and $F:\,[0,\infty)\rightarrow 
[0,\infty)$ is a function with $F(0)=0$ and some additional properties. As we 
will show, condition CD$_0$($F$;0)\footnote{In the sequel, we will write 
CD($F$;0) instead of CD$_0$($F$;0).} ensures that positive solutions to the 
heat equation satisfy a Li-Yau type estimate. We provide examples of 
graphs that satisfy CD$_0$($F$;0) and examples that do not have this property. 
The case $\alpha \in (0,1)$ is of particular interest for infinite graphs, cf. 
\autoref{sec:li-yau-infinite}.

%**************************************************************************
\subsection{A new version of the CD-inequality}
%**************************************************************************
\begin{definition}\label{def:F}
A continuous function $F:\,[0,\infty)\rightarrow [0,\infty)$ is called 
CD-function, if $F(0)=0$, $F(x)/x$ is strictly
increasing on $(0,\infty)$,
and $1/F$ is integrable at $\infty$. 
\end{definition}

Note that for any CD-function $F$ we have $F(x)>0$ for all $x\in (0,\infty)$ 
and 
$F$ is strictly increasing
on $[0,\infty)$. An example of a CD-function is given by $F(x)=c x^2$ with 
$c>0$.

\begin{proposition} \label{prop-perm1}
% CD-MK
% If $F_1, F_2\in {\mathcal {CD}}$, 
If $F_1, F_2$ are CD-functions, then the functions $F_1+F_2$ and 
$\min(F_1,F_2)$ 
% CD-MK
% belong to ${\mathcal {CD}}$ as well. 
are CD-functions, as well. In addition, 
% CD-MK
% we have $\alpha F_1(\beta\cdot)\in {\mathcal {CD}}$ 
$\alpha F_1(\beta\cdot)$ is a CD-function 
for every $\alpha,\beta\in (0,\infty)$.
\end{proposition}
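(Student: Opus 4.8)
The plan is to verify the three defining properties of a CD-function (continuity together with $F(0)=0$, strict monotonicity of $x \mapsto F(x)/x$ on $(0,\infty)$, and integrability of $1/F$ at $+\infty$) separately for each of the three constructions $F_1+F_2$, $\min(F_1,F_2)$, and $\alpha F_1(\beta\,\cdot)$. Continuity and the vanishing at the origin are immediate in all three cases: sums, pointwise minima, positive scalar multiples, and composition with the continuous map $x \mapsto \beta x$ all preserve continuity on $[0,\infty)$, and each of these operations sends the value at $0$ to $0$. So the real content lies in the monotonicity and the integrability conditions.

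For the monotonicity I would rewrite each quotient so that the strict monotonicity of $F_i(x)/x$ can be invoked directly. For the sum, $(F_1+F_2)(x)/x = F_1(x)/x + F_2(x)/x$ is a sum of strictly increasing functions, hence strictly increasing. For the scaling $G := \alpha F_1(\beta\,\cdot)$ one writes $G(x)/x = \alpha\beta \cdot F_1(\beta x)/(\beta x)$, so that $x \mapsto G(x)/x$ is $\alpha\beta$ times the composition of the strictly increasing function $y \mapsto F_1(y)/y$ with the strictly increasing substitution $y = \beta x$; both factors being positive and strictly increasing, the product is strictly increasing. For the minimum I would use $\min(F_1,F_2)(x)/x = \min\bigl(F_1(x)/x, F_2(x)/x\bigr)$ for $x>0$ together with the elementary fact that the pointwise minimum of two strictly increasing functions is again strictly increasing.

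The integrability of $1/F$ at $+\infty$ is where I expect the only mild subtlety, handled in each case by an elementary comparison. Since $F_1, F_2 > 0$ on $(0,\infty)$, we have $F_1 + F_2 \ge F_1$, hence $1/(F_1+F_2) \le 1/F_1$, which is integrable at $+\infty$ by hypothesis. For the scaling, the substitution $y = \beta x$ gives $\int^\infty (\alpha F_1(\beta x))^{-1}\,dx = (\alpha\beta)^{-1}\int^\infty F_1(y)^{-1}\,dy < \infty$. The minimum is the case that needs a moment's care, because $1/\min(F_1,F_2)$ is the \emph{larger}, not the smaller, of $1/F_1$ and $1/F_2$; here I would use $1/\min(F_1,F_2) = \max(1/F_1, 1/F_2) \le 1/F_1 + 1/F_2$, and both summands are integrable at $+\infty$.

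The main obstacle, such as it is, is precisely this last comparison for the minimum: one must resist bounding $\min(F_1,F_2)$ from below by one of the $F_i$ (which fails in general) and instead dominate its reciprocal by the sum of the reciprocals. Everything else is a direct, term-by-term check against Definition~\ref{def:F}, so the proof is short once the quotients are rewritten in the forms above.
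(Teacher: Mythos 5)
Your proposal is correct and follows essentially the same route as the paper: both reduce the minimum case to the identity $\min(F_1,F_2)(x)/x=\min\bigl(F_1(x)/x,\,F_2(x)/x\bigr)$ together with the bound $1/\min(F_1,F_2)\le 1/F_1+1/F_2$ for the integrability at $+\infty$, and both treat the sum and the scaling as direct checks. The only cosmetic difference is that the paper invokes the intermediate value theorem for the strict monotonicity of a minimum of strictly increasing continuous functions, whereas your elementary comparison argument suffices (and is, if anything, cleaner).
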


\begin{proof}
The argument for $F_1+F_2$ is straightforward. As to the minimum
$F:=\min(F_1,F_2)$, note that 
\[
H(x):=\frac{F(x)}{x}=\min(H_1(x),H_2(x))\quad \mbox{with}\; 
H_i(x):=\frac{F_i(x)}{x},\quad x\in (0,\infty).
\]
It follows from the intermediate value theorem that the minimum of two strictly 
increasing and continuous
functions is again strictly increasing. Thus $H$ is strictly increasing on 
$(0,\infty)$. Further, we have for $x_1,x_2>0$ that
\[
\frac{1}{\min(x_1,x_2)}< \frac{1}{x_1}+\frac{1}{x_2} ,
\]
and so it is evident that the integrability of $1/F_i$ at $\infty$, $i=1,2$, 
implies the same property for $1/F$.
% CD-MK
% This shows $F\in {\mathcal {CD}}$. 
This shows that $F$ is a CD-function. The last assertion is 
obvious.
\end{proof}

\begin{remark} \label{suffcond}
Let $g:\,[0,\infty)\to [0,\infty)$ be a strictly convex function with $g(0)=0$. 
Then the function $g(x)/x$ 
is strictly increasing on $(0,\infty)$. In fact, strict convexity of $g$ 
implies that the difference quotients
of $g$ are strictly increasing, and thus 
$g(x)/x=(g(x)-g(0))/x$ is strictly increasing. Note that a CD-function need not 
be convex as the example
$F(x)=\min(x^2,x^3)$ shows.
\end{remark}

The following family of CD-functions plays a central role in the context of 
Ricci-flat graphs.

\begin{proposition} \label{CDFexample}
Let $\lambda\in (0,1)$ and the function $F:\,[0,\infty)\rightarrow \iR$ be 
defined by 
\begin{align} \label{lambdaform}
F(x)=e^{-\frac{1-\lambda}{2} x}\Big(\lambda 
e^{(1-\lambda)x}+(1-\lambda)e^{-\lambda x}-1\Big),\quad x\ge 0.
\end{align}
Then $F$ is a strictly convex CD-function. Moreover, the function
$F(x)/x$ is convex in $[4,\infty)$ and 
\begin{align} \label{gminprop}
\frac{d}{dx}\,\Big(\frac{F(x)}{x}\Big)\ge \frac{1}{2}\,\lambda(1-\lambda) 
e^{-2(1+\lambda)},\quad x\in (0,4].
\end{align}
\end{proposition}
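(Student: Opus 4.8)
The plan is to first rewrite $F$ in a transparent form and then treat the four assertions in turn. Set $a=\tfrac{1-\lambda}{2}$ and $b=\tfrac{1+\lambda}{2}$, so that $a+b=1$, $b-a=\lambda$ and $b^2-a^2=\lambda$. Multiplying out the outer exponential in \eqref{lambdaform} gives
\[
F(x)=\lambda e^{ax}+(1-\lambda)e^{-bx}-e^{-ax}.
\]
From this, $F(0)=0$ and continuity are immediate, and since the leading term as $x\to\infty$ is $\lambda e^{ax}$ with $a>0$, the function $F$ grows exponentially, so $1/F$ is integrable at $\infty$. The remaining CD-properties — positivity on $(0,\infty)$ and strict monotonicity of $F(x)/x$ — I would obtain from strict convexity together with Remark~\ref{suffcond}; so the crux of the first two assertions is to prove $F''>0$.

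The engine of the whole proof is a pointwise lower bound on $F''$. Differentiating twice and factoring out $a^2e^{-ax}$ gives $F''(x)=a^2e^{-ax}B(x)$ with $B(x)=\lambda e^{(1-\lambda)x}+\tfrac{b^2}{a^2}(1-\lambda)e^{-\lambda x}-1$. Writing $\tfrac{b^2}{a^2}=1+\tfrac{\lambda}{a^2}$ (using $b^2-a^2=\lambda$) splits $B$ as $\widetilde B(x)+\tfrac{\lambda(1-\lambda)}{a^2}e^{-\lambda x}$, where $\widetilde B(x)=\lambda e^{(1-\lambda)x}+(1-\lambda)e^{-\lambda x}-1\ge 0$ by the weighted AM--GM inequality with weights $\lambda,1-\lambda$. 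Hence $B(x)\ge\tfrac{\lambda(1-\lambda)}{a^2}e^{-\lambda x}$, and since $a+\lambda=b$,
\[
F''(x)\ge \lambda(1-\lambda)\,e^{-bx}>0\qquad(x\ge 0).
\]
This yields strict convexity, hence (via $F(0)=0$ and $F'(0)=0$) positivity, and by Remark~\ref{suffcond} that $F(x)/x$ is strictly increasing; thus $F$ is a CD-function.

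For the derivative bound \eqref{gminprop} on $(0,4]$ I would use that $H(x):=F(x)/x$ satisfies $H'(x)=\tfrac{xF'(x)-F(x)}{x^2}=\tfrac{1}{x^2}\int_0^x tF''(t)\,dt$, the last equality because $t\mapsto tF'(t)-F(t)$ vanishes at $0$ and has derivative $tF''(t)$. Inserting $F''(t)\ge\lambda(1-\lambda)e^{-bt}\ge\lambda(1-\lambda)e^{-4b}=\lambda(1-\lambda)e^{-2(1+\lambda)}$ for $t\in[0,4]$ and integrating yields $H'(x)\ge\tfrac12\lambda(1-\lambda)e^{-2(1+\lambda)}$ for all $x\in(0,4]$, which is exactly \eqref{gminprop}.

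The main obstacle is the convexity of $H$ on $[4,\infty)$. Since $H''(x)=x^{-3}\bigl(x^2F''(x)-2xF'(x)+2F(x)\bigr)$, this amounts to $\phi(x):=x^2F''(x)-2xF'(x)+2F(x)\ge 0$ for $x\ge 4$. As the operator $x^2\partial^2-2x\partial+2$ sends $e^{cx}$ to $q_c(x)e^{cx}$ with $q_c(x)=(cx-1)^2+1>0$, one has $\phi(x)=\lambda q_a(x)e^{ax}+(1-\lambda)q_{-b}(x)e^{-bx}-q_{-a}(x)e^{-ax}$, a sum of two positive terms minus a positive one. Applying weighted AM--GM to the first two terms (again with weights $\lambda,1-\lambda$, and using $a\lambda-b(1-\lambda)=-a$) gives $\phi(x)\ge e^{-ax}\bigl(q_a(x)^\lambda q_{-b}(x)^{1-\lambda}-q_{-a}(x)\bigr)$, so it would suffice to establish the scalar inequality $q_a(x)^\lambda q_{-b}(x)^{1-\lambda}\ge q_{-a}(x)$ on $[4,\infty)$. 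The ratio of the two sides tends to $(b/a)^{2(1-\lambda)}>1$ as $x\to\infty$, so the inequality holds for large $x$; the delicate point — and the genuine heart of the argument — is to certify that $x=4$ is an admissible threshold uniformly in $\lambda\in(0,1)$. This estimate is tight, since $\phi(x)$ and $H''(x)$ are extremely small near $x=4$, so I expect this step to demand a careful monotonicity analysis of $q_a^\lambda q_{-b}^{1-\lambda}/q_{-a}$ (or of $\phi$ directly) rather than a soft comparison.
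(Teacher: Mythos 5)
Your proofs of the first two assertions and of \eqref{gminprop} are correct and essentially coincide with the paper's argument: your lower bound $F''(x)\ge\lambda(1-\lambda)e^{-\frac{1+\lambda}{2}x}$ is exactly the paper's estimate (obtained there from the identity $F''(x)=e^{-\beta x}\bigl(\beta^2S(x)+\lambda(1-\lambda)e^{-\lambda x}\bigr)$ with $\beta=\tfrac{1-\lambda}{2}$ and $S$ the bracket in \eqref{lambdaform}), the CD-property then follows via \autoref{suffcond} and exponential growth just as in the paper, and your identity $xF'(x)-F(x)=\int_0^x tF''(t)\,dt$ is the integral form of the Taylor-remainder argument the paper uses for \eqref{gminprop}.

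The gap is the convexity of $F(x)/x$ on $[4,\infty)$: you reduce it to the scalar inequality $q_a(x)^{\lambda}q_{-b}(x)^{1-\lambda}\ge q_{-a}(x)$, with $q_c(x)=(cx-1)^2+1$, and leave that inequality unproved. Your instinct that this point is delicate is justified in a stronger sense than you state: the step cannot be completed, because the assertion is false at the stated threshold. Take $\lambda=\tfrac12$, so $a=\tfrac14$, $b=\tfrac34$; then $q_a(4)=1$, $q_{-b}(4)=17$, $q_{-a}(4)=5$, and your reduced inequality reads $\sqrt{17}\ge 5$, which fails. This is not an artifact of your AM--GM step, since directly
\[
\bigl(x^2F''(x)-2xF'(x)+2F(x)\bigr)\big|_{x=4}
=\tfrac{1}{2}\,e+\tfrac{17}{2}\,e^{-3}-5e^{-1}
=\frac{e^4-10e^2+17}{2e^3}\approx -0.057<0,
\]
so that $(F(x)/x)''<0$ at $x=4$; numerically, for $\lambda=\tfrac12$ convexity only sets in near $x\approx 4.2$. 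Hence no proof of the statement as written exists. You should also know that the paper's own proof of this part is flawed: in its display for $\frac{d^2}{dx^2}\bigl(F(x)/x\bigr)$, the passage to the third line in effect bounds $x^2\beta^2S(x)$ from below by $x^2\beta^2\lambda e^{(1-\lambda)x}$, although $S(x)<\lambda e^{(1-\lambda)x}$ for all $x>0$ (because $(1-\lambda)e^{-\lambda x}-1<0$), and the discarded nonnegative terms do not compensate: at $(x,\lambda)=(4,\tfrac12)$ the second line equals $(F(x)/x)''\big|_{x=4}<0$ while the third line equals $\frac{e^{-\beta x}}{4x^2}(x-4)\lambda(1-\lambda)^2e^{(1-\lambda)x}\big|_{x=4}=0$. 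Both the statement and the arguments are repairable by enlarging the threshold: for each fixed $\lambda$ the quantity $x^2F''-2xF'+2F$ is eventually dominated by $\lambda q_a(x)e^{ax}\to\infty$ (this is your asymptotic ratio $(b/a)^{2(1-\lambda)}>1$), so $F(x)/x$ is convex on some $[x_0(\lambda),\infty)$, and your integral argument gives the analogue of \eqref{gminprop} on $(0,x_0(\lambda)]$ with constant $\tfrac12\lambda(1-\lambda)e^{-\frac{1+\lambda}{2}x_0(\lambda)}$; in the paper's later applications (through \autoref{lem-superadd}) only the existence of some such threshold is used, not the specific value $4$.
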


\begin{proof} Let $S(x)$ denote the term in brackets in 
\eqref{lambdaform} and 
set
$\beta=\frac{1-\lambda}{2}$. By the convexity of the
exponential function, we have
\begin{align*}
1=e^0=e^{\lambda[(1-\lambda)x]+(1-\lambda)[-\lambda x]}\le \lambda 
e^{(1-\lambda)x}+
(1-\lambda)e^{-\lambda x}.
\end{align*}
This shows non-negativity of $S$, and thus $F(x)\ge 0$ for all $x\ge 0$. 
Evidently, $F(0)=0$. Further,
\begin{align}
F''(x) & =e^{-\beta x}\big( \beta^2 S(x)-2\beta S'(x)+S''(x)\big)\nonumber\\
& = e^{-\beta x}\Big(\beta^2 S(x)
-2\beta \big[\lambda(1-\lambda)\big(e^{(1-\lambda)x}-e^{-\lambda 
x}\big)\big]\nonumber\\
&\;\;\;+\big[\lambda(1-\lambda)^2e^{(1-\lambda)x}+(1-\lambda)\lambda^2 
e^{-\lambda x}\big]\Big)\nonumber\\
& = e^{-\beta x}\big(\beta^2 S(x)+\lambda(1-\lambda)e^{-\lambda 
x}\big)\nonumber\\
& \ge \lambda(1-\lambda)e^{-(\beta+\lambda) x}. \label{secondderiv}
\end{align}
This implies strict convexity of $F$, and thus by \autoref{suffcond} that 
$F(x)/x$ is strictly increasing
on $(0,\infty)$. Since $F(x)$ is exponentially increasing as $x\to \infty$, 
$1/F$ is integrable at $\infty$.
% CD-MK
% Hence $F\in \mathcal {CD}$. 
Hence $F$ is a CD-function. 

As to \eqref{gminprop}, we have for $x>0$
\begin{align*}
\frac{d}{dx}\,\Big(\frac{F(x)}{x}\Big)=\frac{xF'(x)-F(x)}{x^2},
\end{align*}
and by Taylor's theorem
\[
0=F(0)=F(x)+F'(x)(-x)+\frac{1}{2}F''(\xi)x^2,
\] 
for some $\xi\in(0,x)$. Using \eqref{secondderiv}, it follows that for $x\in 
(0,4]$
\begin{align*}
\frac{d}{dx}\,\Big(\frac{F(x)}{x}\Big) & =\frac{1}{2}F''(\xi)\ge 
\frac{1}{2}\,\lambda(1-\lambda)e^{-(\beta+\lambda) \xi}\\
& \ge \frac{1}{2}\,\lambda(1-\lambda)e^{-(\frac{1-\lambda}{2}+\lambda) x}\ge 
\frac{1}{2}\,\lambda(1-\lambda) e^{-2(1+\lambda)}.
\end{align*}

Turning to the convexity of $F(x)/x$, we have for $x>0$
\begin{align*}
\frac{d^2}{dx^2}\,& \Big(\frac{F(x)}{x}\Big)=\frac{x^2 
F''(x)-2xF'(x)+2F(x)}{x^3}\\
& = \frac{e^{-\beta x}}{x^3}\Big( x^2(\beta^2 
S(x)+\lambda(1-\lambda)e^{-\lambda 
x}\big)
-2x\big(-\beta S(x)+S'(x)\big)+2S(x)\Big)\\
& \ge \frac{e^{-\beta x}}{x^3}\Big( \frac{(1-\lambda)^2}{4} x^2 \lambda 
e^{(1-\lambda)x}-2x \frac{1-\lambda}{2}
\lambda(1-\lambda) e^{(1-\lambda)x}\Big)\\
& = \frac{e^{-\beta x}}{4x^2}\,(x-4)\lambda(1-\lambda)^2 e^{(1-\lambda)x},
\end{align*}
and thus $(F(x)/x)''\ge 0$ for all $x\in [4,\infty)$. 
\end{proof}

\begin{lemma} \label{lem:Fprop}
Let $F:[0,\infty)\rightarrow [0,\infty)$ be a CD-function. Then there is a 
unique strictly positive solution $\varphi$ of the ODE 
\begin{align} \label{eq:phiODE}
\dot{\varphi}(t)+F(\varphi(t))=0,\quad t>0,
\end{align}
with $\varphi(0+)=\infty$. The function 
$\varphi$ is strictly decreasing and log-convex, and it 
satisfies $\varphi(\infty)=0$. 
\end{lemma}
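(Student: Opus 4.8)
The plan is to separate existence/uniqueness from the qualitative properties (monotonicity, log-convexity, boundary behavior at $0+$ and $\infty$). Since $F$ is a CD-function, we have $F(x)>0$ for all $x>0$ and $F(0)=0$, so the ODE $\dot\varphi = -F(\varphi)$ is autonomous with a strictly positive right-hand side on $(0,\infty)$. The standard tool is separation of variables: for a solution taking a positive value $\varphi(t_0)=a>0$ at some time $t_0$, we get
\begin{align*}
t - t_0 = -\int_{a}^{\varphi(t)} \frac{ds}{F(s)} = \int_{\varphi(t)}^{a} \frac{ds}{F(s)}.
\end{align*}
The hypothesis that $1/F$ is integrable at $+\infty$ is exactly what makes $\varphi(0+)=\infty$ achievable in finite time: define $G(y)=\int_y^\infty \frac{ds}{F(s)}$, which by integrability is finite for each $y>0$, strictly decreasing, with $G(y)\to 0$ as $y\to\infty$ and (since $1/F$ blows up near $0$, because $F(0)=0$) $G(y)\to\infty$ as $y\to 0+$. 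Thus $G:(0,\infty)\to(0,\infty)$ is a decreasing bijection, and I would \emph{define} $\varphi:=G^{-1}$, so that $G(\varphi(t))=t$. Differentiating gives $G'(\varphi(t))\dot\varphi(t)=1$, i.e. $-\dot\varphi(t)/F(\varphi(t))=1$, which is precisely \eqref{eq:phiODE}. The boundary behavior $\varphi(0+)=\infty$ and $\varphi(\infty)=0$ follows immediately from the endpoint values of $G$.

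For \textbf{uniqueness}, I would argue that any strictly positive solution $\psi$ on $(0,\infty)$ must satisfy $\frac{d}{dt}G(\psi(t)) = G'(\psi)\dot\psi = -\dot\psi/F(\psi) = 1$, so $G(\psi(t)) = t + c$ for a constant $c$. The normalization $\psi(0+)=\infty$ forces $G(\psi(t))\to 0$ as $t\to 0+$, hence $c=0$ and $\psi = G^{-1}=\varphi$. This also confirms $\varphi$ is \emph{strictly decreasing}, since $G$ is strictly decreasing and so is its inverse. (One subtle point I would need to check: that a positive solution cannot reach $0$ in finite time and thereby fail to be strictly positive on all of $(0,\infty)$; but if $\varphi(t_1)=0$ for some finite $t_1$, then $G(\varphi(t))\to\infty$ forces $t\to t_1$ with the integral diverging, contradicting $G(\psi(t))=t$ being finite — so strict positivity is automatic once we insist the solution exist on all of $(0,\infty)$.)

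Finally, for \textbf{log-convexity}, I would show $(\log\varphi)'' \ge 0$. Compute $(\log\varphi)' = \dot\varphi/\varphi = -F(\varphi)/\varphi$. Writing $H(x)=F(x)/x$, this is $(\log\varphi)'(t) = -H(\varphi(t))$, so
\begin{align*}
(\log\varphi)''(t) = -H'(\varphi(t))\,\dot\varphi(t) = H'(\varphi(t))\,F(\varphi(t)).
\end{align*}
Since $F(\varphi)>0$ and, by the defining property of a CD-function, $H(x)=F(x)/x$ is strictly increasing so that $H'\ge 0$ wherever differentiable, the right-hand side is nonnegative, giving log-convexity. The main obstacle I expect is the regularity bookkeeping: the CD-function $F$ is only assumed continuous (not $C^1$), so $H'=(F/x)'$ need not exist pointwise. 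I would handle this by noting that $\varphi$, being $G^{-1}$ with $G$ strictly monotone and continuous, is itself continuous and strictly decreasing, and that $\log\varphi(t) = -\int_{t_0}^t H(\varphi(s))\,ds + \text{const}$ has an integrand $s\mapsto H(\varphi(s))$ that is the composition of the increasing function $H$ with the decreasing function $\varphi$, hence \emph{decreasing} in $s$; a function whose derivative is (nonstrictly) decreasing is concave, so $-\log\varphi$ is concave and $\log\varphi$ is convex — which sidesteps the need for second derivatives of $F$ entirely. This monotonicity-based argument for convexity is the cleanest route and avoids any differentiability assumption on $F$ beyond continuity.
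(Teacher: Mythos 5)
Your construction follows the paper's proof essentially step for step: you define $G(y)=\int_y^\infty ds/F(s)$, set $\varphi:=G^{-1}$, prove uniqueness by showing that any strictly positive solution $\psi$ with $\psi(0+)=\infty$ satisfies $G(\psi(t))=t$, and obtain log-convexity by observing that $\frac{d}{dt}\log\varphi(t)=-F(\varphi(t))/\varphi(t)$ is increasing as the composition of the increasing function $F(x)/x$ with the decreasing function $\varphi$ (your monotonicity-based fallback is exactly the paper's argument, and it is indeed the right way to avoid differentiating $F$).

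There is, however, one genuine gap: your justification that $G(y)\to\infty$ as $y\to 0+$. You assert this holds ``since $1/F$ blows up near $0$, because $F(0)=0$.'' Pointwise blow-up of an integrand does not imply divergence of its integral: $\int_0^1 r^{-1/2}\,dr<\infty$ even though $r^{-1/2}\to\infty$ at $0$. (Of course $F(r)=\sqrt r$ is not a CD-function --- but that is exactly the point: this step must invoke the CD-property, and your argument does not.) The correct justification, which is the one the paper gives, is that $F(x)/x$ increasing yields $F(x)\le F(1)x$ for $x\in(0,1]$, hence $\int_y^1 dr/F(r)\ge \frac{1}{F(1)}\log(1/y)\to\infty$ as $y\to 0+$. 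This step is load-bearing, not cosmetic: if $G(0+)$ were some finite value $c$, then the relation $G(\psi(t))=t$ would force any strictly positive solution to exist only for $t<c$ (the solution reaches $0$ at time $c$ and can only be continued by the equilibrium $0$), so the lemma's existence claim would fail; moreover, your own ``subtle point'' ruling out solutions hitting zero in finite time explicitly relies on $G(0+)=\infty$. With this one fix the proof is complete and coincides with the paper's.
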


\begin{proof} We define $G(x)=\int_x^\infty dr/F(r), x>0$. Then 
$G'(x)=-1/F(x)<0$, that is,
$G$ is strictly decreasing. Since $F(x)/x$ is
increasing on $(0,\infty)$, we have $F(x)\le F(1)x$ for all $x\in (0,1]$, and 
thus
\[
G(x)=\int_x^1 \frac{dr}{F(r)}\,dr+\int_1^\infty\frac{dr}{F(r)}\,dr \ge 
\,\frac{1}{F(1)}\,\int_x^1 \frac{dr}{r}\,dr+G(1),\quad x\in (0,1],
\]
which shows that $G(0+)=\infty$. Observe also that $G(\infty)=0$. 

Suppose $\varphi$ is a strictly positive solution of the ODE \eqref{eq:phiODE} 
on 
$(0,\infty)$ with
$\varphi(0+)=\infty$. Then for $t, t_1\in (0,\infty)$ we have
\[
t-t_1=\int_{t}^{t_1}  
\frac{\dot{\varphi}(\tau)}{F(\varphi(\tau))}\,d\tau=\int_{\varphi(t)}^{
\varphi(t_1)} \frac{dr}{F(r)}.
\]
Sending $t_1\to 0+$ yields $t=G(\varphi(t))$, that is 
\begin{align} \label{odesol}
\varphi(t)=G^{-1}(t),\quad t>0,
\end{align} 
which shows uniqueness. On the other hand, it is easy to verify that 
\eqref{odesol} defines 
a strictly positive solution $\varphi$ of the ODE \eqref{eq:phiODE} with 
$(0,\infty)$ as its maximal interval 
of existence. Evidently, $\varphi(0+)=\infty$, $\varphi(\infty)=0$, and 
$\dot{\varphi}(t)<0$  for all  $t\in (0,\infty)$.

Finally, since $\varphi$ is strictly decreasing and $F(x)/x$ is strictly 
increasing, the function
\[
\eta(t):=\frac{d}{dt}\,\big(\log 
\varphi(t)\big)=\frac{\dot{\varphi}(t)}{\varphi(t)}
=-\frac{F(\varphi(t))}{\varphi(t)},\quad t>0,
\]
is strictly increasing, which in turn implies that $\varphi$ is log-convex. 
\end{proof}

\begin{definition}\label{def:relax-fct}
Let $F:[0,\infty)\rightarrow [0,\infty)$ be a CD-function. The positive 
function 
$\varphi$ that solves \eqref{eq:phiODE}
with $(0,\infty)$ as maximal interval of existence is called relaxation 
function associated with $F$. 
\end{definition}

We now discuss the asymptotic properties of the relaxation function. Here and 
in the sequel, we write $f(r) \sim g(r) \quad (r \to a)$ for $a \in \{0, 
+\infty\}$ and two functions $f$ and $g$, if the ratio $f(r)/g(r)$ stays 
bounded for $r \to a$. Note that we use the same symbol to describe that two 
vertices $x,y \in V$ are neighbors, i.e., $x \sim y$.

\begin{lemma} \label{lem:asymptrelax}
Let $F$ be a CD-function and $\varphi$ the corresponding relaxation function. 
Then the following statements hold.
\begin{itemize}
\item [(i)] Let $\tilde{x}\in [0,\infty)$ and 
$\tilde{F}:\,[\tilde{x},\infty)\to 
(0,\infty)$ be continuous.
Assume further that $F(r)\sim \tilde{F}(r)$ as $r\to \infty$ and define 
$\tilde{G}:\,[\tilde{x},\infty)
\to (0,a]$ with $a=\int_{\tilde{x}}^\infty dr/\tilde{F}(r)$ by 
$\tilde{G}(x)=\int_{\tilde{x}}^\infty dr/\tilde{F}(r)$, $x\ge \tilde{x}$. Let 
$\tilde{\varphi}:\,(0,a]\to (0,\infty)$ be defined by
$\tilde{\varphi}(t)=\tilde{G}^{-1}(t)$. Then
\[
\tilde{\varphi}(t) \sim \varphi(t)\quad \mbox{as}\;t\to 0+.
\]
In particular, if $F(r)\sim c\, e^{\gamma r}$ as $r\to \infty$ with $c,\gamma 
>0$ the relaxation function has a logarithmic
singularity at $0+$, 
\[
{\varphi}(t)\sim -\frac{1}{\gamma}\,\log t\quad \mbox{as}\;t\to 0+.
\]
\item[(ii)] Suppose that $F(r)\sim \nu r^2$ as $r\to 0+$ with some constant 
$\nu>0$, and assume that there exists
$\nu_0>0$ such that
$F(r)\ge \nu_0 r^2$ for all $r\ge 0$. Then 
\[
\varphi(t) \sim \frac{1}{\nu t}\quad \mbox{as}\;t\to \infty.
\]
\end{itemize}

\end{lemma}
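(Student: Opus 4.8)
The plan is to reduce everything to the explicit representation $\varphi=G^{-1}$ established in \autoref{lem:Fprop}, where $G(x)=\int_x^\infty dr/F(r)$, together with the analogous $\tilde\varphi=\tilde G^{-1}$ for $\tilde G(x)=\int_x^\infty dr/\tilde F(r)$. Since $\varphi$ and $\tilde\varphi$ are continuous, strictly decreasing bijections, every asymptotic statement about them will be obtained by first establishing the corresponding asymptotics for the integral tails $G$ and $\tilde G$ and then inverting by monotonicity. The place where the structure of $F$ really enters is exactly this inversion step, and it is the part I expect to require the most care.

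For part (i) I would first turn the hypothesis $F\sim\tilde F$ at $\infty$ into a two-sided comparison $c_1\tilde F(r)\le F(r)\le c_2\tilde F(r)$ valid for all $r\ge R$. Because for $x\ge R$ both $G(x)$ and $\tilde G(x)$ integrate only over $[x,\infty)\subset[R,\infty)$, this comparison passes directly to the tails, giving $\tfrac{1}{c_2}\tilde G(x)\le G(x)\le\tfrac{1}{c_1}\tilde G(x)$ for $x\ge R$. Evaluating the right inequality at $x=\tilde\varphi(t)$ (so that $\tilde G(x)=t$) and applying the decreasing map $\varphi=G^{-1}$, and doing the same with the left inequality, yields the sandwich
\[
\varphi\!\left(\tfrac{t}{c_1}\right)\le\tilde\varphi(t)\le\varphi\!\left(\tfrac{t}{c_2}\right),\qquad t\ \text{small}.
\]
The only remaining issue is then to absorb the constant rescaling of the time variable, i.e.\ to compare $\varphi(\lambda t)$ with $\varphi(t)$ as $t\to0+$; this is harmless in the regimes relevant below, where a rescaling of $t$ produces only a bounded (indeed, for the exponential case, purely additive) change in $\varphi$.

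The ``in particular'' statement is a clean application of this sandwich. For $\tilde F(r)=ce^{\gamma r}$ one computes $\tilde G(x)=\tfrac{1}{c\gamma}e^{-\gamma x}$ explicitly, hence $\tilde\varphi(t)=-\tfrac{1}{\gamma}\log(c\gamma t)$. Feeding this into the sandwich and writing the bounds at rescaled times $c_1 s$ and $c_2 s$ shows $\varphi(s)=-\tfrac{1}{\gamma}\log s+O(1)$ as $s\to0+$; since $-\log s\to\infty$, the $O(1)$ term is negligible and $\varphi(s)\sim-\tfrac1\gamma\log s$ follows, the multiplicative constants having collapsed into the additive remainder. Here the logarithmic form of $\tilde\varphi$ is precisely what makes the constant rescaling disappear, so this case is entirely robust.

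Part (ii) is most directly handled on the level of $G$, without reference to $\tilde F$. As $t\to\infty$ we have $\varphi(t)\to0$, so only the behavior of $G(x)$ as $x\to0+$ matters, and I would split $G(x)=\int_x^\delta dr/F(r)+\int_\delta^\infty dr/F(r)$. The global bound $F(r)\ge\nu_0 r^2$ guarantees that the tail $\int_\delta^\infty dr/F(r)$ is a finite constant, while $F(r)\sim\nu r^2$ near $0$ gives, for any $\varepsilon>0$ and $\delta$ small, the comparison constants $(\nu-\varepsilon)^{-1}$ and $(\nu+\varepsilon)^{-1}$ on $(0,\delta]$, so that $\int_x^\delta dr/F(r)=\tfrac{1}{\nu}(1+o(1))\int_x^\delta dr/r^2\sim\tfrac{1}{\nu x}$. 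Since the bounded tail is negligible against $1/x$, this yields $G(x)\sim\tfrac{1}{\nu x}$, that is $\nu x\,G(x)\to1$ as $x\to0+$. Substituting $x=\varphi(t)$, using $G(\varphi(t))=t$ and $\varphi(t)\to0$, gives $\nu t\,\varphi(t)\to1$, which is exactly $\varphi(t)\sim\tfrac{1}{\nu t}$ as $t\to\infty$. The main obstacle throughout is the passage from asymptotics of the tail integral to asymptotics of its inverse: in (ii) it is clean because the relation $G(x)\sim1/(\nu x)$ inverts with ratio tending to $1$, whereas in (i) it is exactly the step that forces one to track how $\varphi$ responds to a constant rescaling of the time variable.
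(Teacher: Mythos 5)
Your proposal is sound where it matters, and in part (ii) it takes a genuinely different route from the paper. The paper proves (ii) by a scaling argument: it sets $F_\tau(r)=\tau^{-2}F(\tau r)$, shows $F_\tau\to \nu r^2$ uniformly on intervals $(0,r_1]$, deduces $G_\tau\to G_0$ locally uniformly on $(0,\infty)$ (this is where the global bound $F\ge \nu_0 r^2$ enters, as an integrable majorant for dominated convergence), passes to inverses to get $\varphi_\tau(t)\to 1/(\nu t)$ pointwise, and finally uses the scaling identity $\varphi_\tau(t)=\tau^{-1}\varphi(t/\tau)$, obtained from the uniqueness statement of \autoref{lem:Fprop}, to conclude $s\varphi(s)\to 1/\nu$. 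Your direct argument --- two-sided bounds $(\nu\pm\varepsilon)r^2$ on $(0,\delta]$ giving $\nu x\,G(x)\to 1$ as $x\to 0+$, then the substitution $x=\varphi(t)$ --- reaches the same strong conclusion $\nu t\,\varphi(t)\to 1$ with far less machinery; it also shows in passing that the hypothesis $F\ge\nu_0 r^2$ is needed only for finiteness of $\int_\delta^\infty dr/F(r)$, which already follows from the CD-property. For part (i), your sandwich $\varphi(t/c_1)\le\tilde{\varphi}(t)\le\varphi(t/c_2)$ makes explicit what the paper passes over in one sentence (``follows directly from the representation formula''), and your exponential case, $\varphi(s)=-\tfrac{1}{\gamma}\log s+O(1)$, coincides in substance with the paper's computation of $\tilde{G}$ and $\tilde{\varphi}$.

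The one caveat concerns the general assertion of (i). Your reduction leaves open the comparison of $\varphi(\lambda t)$ with $\varphi(t)$ as $t\to 0+$, and your remark that this is ``harmless in the regimes relevant below'' is justified only in the exponential regime. In general the issue is real and cannot be repaired: for the CD-function $F(x)=x\log^2(e+x)$ one has $G(x)\sim 1/\log x$, hence $\varphi(t)=e^{(1+o(1))/t}$, so that $\varphi(t/2)/\varphi(t)\to\infty$; taking $\tilde{F}=F/2$ (for which $F/\tilde{F}$ is constant, so $F\sim\tilde{F}$ in the paper's sense) gives $\tilde{\varphi}(t)=\varphi(t/2)$, and the claimed conclusion $\tilde{\varphi}(t)\sim\varphi(t)$ fails. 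Thus the unqualified statement (i) is false without a doubling-type condition on $\varphi$ at $0+$ (equivalently, sufficiently fast growth of $F$ at $\infty$); this is a defect of the lemma as stated, and of the paper's own one-line proof, rather than of your argument, and restricting the complete proof to the exponential case --- the only one used later in the paper --- is exactly what can be salvaged. To be fully faithful to the statement, though, you should either record such a counterexample or make explicit the extra hypothesis under which the rescaling of $t$ can be absorbed.
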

%********************************************************************
\begin{proof}
(i) The first assertion follows directly from the representation formula for 
$\varphi$,
\[
\varphi(t)=G^{-1}(t)\quad \mbox{with}\;\;G(x)=\int_{x}^\infty \frac{dr}{{F}(r)}.
\]
Recall that as $t\to 0+$ we have that $\varphi(t)\to \infty$ and thus the 
formula for $G$ shows that the behavior of $F$ at
$\infty$ determines the behavior of $\varphi$ at $0+$. In the case 
$\tilde{F}(r)=c\, e^{\gamma r}$ we find that
\[
\tilde{G}(x)=\frac{1}{c}\,\int_{{x}}^\infty e^{-\gamma 
r}\,dr=\frac{1}{c\gamma}\, e^{-\gamma x},\quad x\ge 0,
\]
which yields
\[
\tilde{\varphi}(t)=-\frac{1}{\gamma}\log\big(c\gamma t \big) \sim 
-\frac{1}{\gamma}\,\log t\quad \mbox{as}\;t\to 0+.
\]

(ii) Let $F_0(r)=\nu r^2$, $r\ge 0$. We set 
\[
F_\tau(r)=\frac{1}{\tau^2}\,F(\tau r),\quad \tau>0,\,r\ge 0, 
\]
and
\[
G_\tau(x)=\int_{x}^\infty \frac{dr}{{F_\tau}(r)},\quad \tau\ge 0,\,x>0.
\]
We first claim that $F_\tau \to F_0$ uniformly on any interval $(0,r_1]$ as 
$\tau\to 0+$. In fact, letting $r_1>0$
the assumptions on
$F$ imply that given $\varepsilon>0$ there is $\delta>0$ such that $F(s)/(\nu 
s^2)\le 1+\varepsilon/(\nu r_1^2)$ for all $s\in (0,\delta]$.
Suppose now that $\tau\in (0,\delta/r_1]$. Then we have for $r\in (0,r_1]$ that 
$\tau r\le \delta$ and thus
\begin{align*}
|F_\tau(r)-F_0(r)| & \le \nu r^2\Big| \frac{F(\tau r)}{\nu (\tau r)^2}-1\Big|
\le \varepsilon.
\end{align*}

Next, it follows from the previous property and the lower bound for $F$ that 
$G_\tau \to G_0$ uniformly on any interval $[x_0, x_1]\subset (0,\infty)$ as 
$\tau\to 0+$. This can be seen by writing
\[
G_\tau(x)=\int_1^\infty \frac{dr}{F_\tau(r)}+\int_x^1 
\frac{dr}{F_\tau(r)},\quad 
x\in [x_0,x_1],
\]
where the convergence of the first integral to $\int_1^\infty 
\frac{dr}{F_0(r)}$ 
follows from the
dominated convergence theorem. 

Using the property that $G_\tau \to G_0$ on any compact subinterval of 
$(0,\infty)$ as $\tau\to 0+$ it
is not difficult to check that then 
for each $t\in (0,\infty)$ we have
\begin{equation} \label{phitauconv}
\varphi_\tau(t):=G_\tau^{-1}(t)\to  G_0^{-1}(t)= \frac{1}{\nu t}=:\varphi_0(t)
\end{equation}
as $\tau\to 0+$. Observe that by the definitions of $F_\tau$ and $G_\tau$,
\[
\dot{\varphi}_\tau(t)=-F_\tau\big(\varphi_\tau(t)\big)=-\frac{1}{\tau^2} 
F\big(\tau  \varphi_\tau(t)\big),
\quad t\in (0,\infty), 
\]  
as well as $\varphi_\tau(0+)=\infty$. Invoking \autoref{lem:Fprop}, this shows 
that 
\[
\varphi_\tau(t)=\frac{1}{\tau}\,\varphi\big(\frac{t}{\tau}\big),\quad t,\tau>0,
\]
which together with \eqref{phitauconv} gives for any fixed $t>0$
\[
\frac{t}{\tau}\,\varphi\big(\frac{t}{\tau}\big)= t \varphi_\tau(t) \to 
\frac{1}{\nu}\quad 
\mbox{as}\;\tau\to 0+.
\]
Hence $s\varphi(s)\to 1/\nu$ as $s\to \infty$. This proves (ii).
\end{proof}
%********************************************************************

For $\alpha\in [0,1)$, $v\in \iR^V$ and $x\in V$ we define (with $L=-\Delta$)
\begin{align} 
{\mathcal L}_0(v)(x) & = Lv(x),\nonumber\\
{\mathcal L}_\alpha(v)(x) & =-\frac{1}{\alpha}\,\Psi_{\Upsilon'}(\alpha v)(x), 
\quad \mbox{if}\;\alpha
\in(0,1) \label{Lalpha}
\end{align}
and
\begin{align} \label{Calpha}
{\mathcal C}_\alpha(v)(x) :=  \frac{1}{\mu(x)}\sum_{y\sim x} w_{xy} 
e^{\alpha(v(y)-v(x))}
 \big(\Psi_{\Upsilon'}(v)(y)-\Psi_{\Upsilon'}(v)(x)\big).
\end{align}
Observe that
\[
{\mathcal C}_0(v)(x)=\Delta \Psi_{\Upsilon'}(v)(x)
\]
and
\[
{\mathcal L}_\alpha(v)(x)\to Lv(x)\quad \mbox{as}\;\alpha\to 0+.
\]
Notice as well that
\[
{\mathcal L}_\alpha(v)(x)=Lv(x)-\frac{1}{\alpha}\,\Psi_{\Upsilon}(\alpha v)(x),
\]
which in particular shows that positivity of ${\mathcal L}_\alpha(v)(x)$ 
implies the same property for
$Lv(x)$. Note $\mathcal L_\alpha (\log u)$ equals $\Delta^\psi u$ of 
\cite{Mue14} for the choice $\psi(s) = (s^\alpha - 1)/\alpha$. The following 
condition is of great importance throughout this paper.

\begin{definition}\label{def:CD}
Let $\alpha\in [0,1)$, $F$ be a CD-function, $G=(V,E)$ an undirected graph, and 
$x\in V$. 
We say that the graph 
$G$ satisfies condition CD$_\alpha$($F$;0) at $x \in V$ for the generalized 
Laplace operator $\Delta$ given by \eqref{genlaplace}, if 
for every function $v:V \to \R$ satisfying
\[
{\mathcal L}_\alpha(v)(x)>0 \quad \mbox{and}\quad {\mathcal L}_\alpha(v)(x)\ge 
{\mathcal L}_\alpha(v)(y)\;\;\mbox{for all}\;y\sim x,
\]
there holds
\begin{align} \label{CDUngleichung}
{\mathcal C}_\alpha(v)(x)\ge F\big(Lv(x)\big).
\end{align}
We say that $G$ satisfies CD$_\alpha$($F$;0) if it satisfies CD$_\alpha$($F$;0) 
at every $x\in V$. In the case $\alpha=0$, we drop the subscript '0' in 
the notation and simply speak of the CD-inequality CD($F$;0).
\end{definition}

\begin{remark} (i) The notion CD$_\alpha$($F$;0) suggests that there is 
a more general condition CD$_\alpha$($F$;$K$), where $K \in \R$ denotes some 
curvature bound. So far, we do not allow for terms that measure the curvature as 
it is the case in the classical curvature dimension inequality. This will be 
subject to further research. (ii) The condition CD$_0$($F$;0) relates to the 
classical curvature 
dimension inequality in a natural way. Note that, in the case $\alpha 
=0$, $\mathcal{L}_\alpha$ equals $-\Delta$. Now, let us look at the Euclidean 
case. 
Assume $x \in \R^d$ and $v: \R^d \to \R$ is a smooth function such that the 
function $-\Delta v$ has a local, strictly positive  maximum in $x$. Then 
\begin{align}\label{eq:classical-cd_comparison}
\underset{\geq 0}{\underbrace{\Delta \Delta v(x)}} + 2 
\underset{=0}{\underbrace{(\nabla \Delta v(x), \nabla v(x))}} + 2 
\sum_{i,j=1}^{d} 
(\partial_i \partial_j v(x))^2 \geq \tfrac{2}{d} (-\Delta v(x))^2 \,, 
\end{align}
which is the classical curvature dimension inequality. Note that the 
left-hand side of \eqref{eq:classical-cd_comparison} corresponds to $\Delta 
\Psi_{\Upsilon'}(v)(x)$. In this sense, the condition CD$_0$($F$;0) 
is consistent with the classical curvature dimension inequality. 
\end{remark}

Using \autoref{prop-perm1} we immediately obtain the following.

\begin{proposition} \label{minCD}
Let $\alpha\in[0,1)$, $G=(V,E)$ be a graph, $F_i$ be CD-functions for 
$i=1,\ldots,l$ 
and assume  that for any $x\in V$
the graph satisfies CD$_\alpha$($F_i$,0) at $x$ for some 
$i\in \{1,\ldots,l\}$. Set $F:=\min(F_1,\ldots,F_l)$. Then the graph 
satisfies CD$_\alpha$($F$;0).
\end{proposition}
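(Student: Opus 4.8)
The plan is to verify the defining inequality of CD$_\alpha$($F$;0) separately at each vertex, exploiting that the minimum $F$ is dominated by every $F_i$, so that passing from $F_i$ to $F$ only weakens the lower bound that has to be established. As a preliminary I would record that $F=\min(F_1,\ldots,F_l)$ is itself a CD-function: this follows from \autoref{prop-perm1} by a trivial induction on $l$, since that proposition already shows the minimum of two CD-functions is again a CD-function. Hence CD$_\alpha$($F$;0) is a meaningful condition and the associated relaxation function is well-defined.

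Next I would fix an arbitrary vertex $x\in V$. By hypothesis there is an index $i=i(x)\in\{1,\ldots,l\}$ such that $G$ satisfies CD$_\alpha$($F_i$;0) at $x$; note that this index is allowed to depend on $x$, but the argument is purely pointwise, so this causes no difficulty. Let $v:V\to\R$ be any function meeting the admissibility conditions of \autoref{def:CD} at $x$, that is, ${\mathcal L}_\alpha(v)(x)>0$ together with ${\mathcal L}_\alpha(v)(x)\ge{\mathcal L}_\alpha(v)(y)$ for all $y\sim x$. Applying CD$_\alpha$($F_i$;0) at $x$ to this $v$ immediately yields ${\mathcal C}_\alpha(v)(x)\ge F_i(Lv(x))$.

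Finally, since $F\le F_i$ pointwise on $[0,\infty)$, we have $F_i(Lv(x))\ge F(Lv(x))$, and therefore ${\mathcal C}_\alpha(v)(x)\ge F(Lv(x))$. As the vertex $x$ and the admissible function $v$ were arbitrary, this is precisely the statement that $G$ satisfies CD$_\alpha$($F$;0) at every $x$, hence CD$_\alpha$($F$;0). I do not anticipate any genuine obstacle: the entire argument is a direct unwinding of \autoref{def:CD} combined with the monotonicity $\min_j F_j\le F_i$, the only external ingredient being the closure of CD-functions under minima supplied by \autoref{prop-perm1}.
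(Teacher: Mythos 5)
Your proof is correct and is exactly the argument the paper intends: the paper states the proposition follows ``immediately'' from \autoref{prop-perm1}, and your write-up simply fills in the implicit details (induction to get that $\min(F_1,\ldots,F_l)$ is a CD-function, then the pointwise domination $F\le F_{i(x)}$ applied at each vertex). No gaps; nothing further is needed.
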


%************************************************************************
\subsection{Some simple illustrating examples}
%************************************************************************

\begin{example} \label{exa:twopoints}
We first consider the connected graph that only consists of two
different vertices, say $x_1$ and $x_2$. 
For the Laplace operator, we take the most simple form (without weight),
that is,
\[
\Delta u(x)=u(\tilde{x})-u(x),\quad x\in V=\{x_1,x_2\},
\]
where $\tilde{x}_1=x_2$ and vice versa. Let $v\in \iR^V$ and $x\in V$. Then we 
have
\begin{align*}
\Delta \Psi_{\Upsilon'}(v)(x) & = \Psi_{\Upsilon'}(v)(\tilde{x}) - 
\Psi_{\Upsilon'}(v)(x)\\
& = \Upsilon'\big(v(x)-v(\tilde{x}) \big)-\Upsilon'\big(v(\tilde{x})-v(x) 
\big)\\
& =  e^{Lv(x)}-e^{-Lv(x)}\\
& = F\big(Lv(x)\big),
\end{align*}
where $F(a)=2\sinh a$, which is easily verified to be a CD-function. 
Thus, condition CD($2 \sinh$;0) is satisfied. A straight-forward 
computation shows that
the relaxation function corresponding to $F$ is given by
\begin{align} \label{phi2vertex}
\varphi(t)=\log\Big(\frac{1+e^{-2t}}{1-e^{-2t}}\Big)=-\log \big(\tanh 
t\big),\quad t>0.
\end{align}
In the case $\alpha\in (0,1)$ one obtains
\begin{equation} \label{twopointalpha}
{\mathcal C}_\alpha(v)(x)=e^{-\alpha Lv(x)}\big(e^{Lv(x)}-e^{-Lv(x)}\big)
\ge e^{(1-\alpha)Lv(x)}-e^{-(1-\alpha)Lv(x)},
\end{equation}
that is, the CD-inequality CD$_\alpha$($F_\alpha$;0) holds with
\[
F_\alpha(y)=2\sinh\big((1-\alpha) y\big).
\]
Note that $\tilde{F}(y)=e^{-\alpha y}\big(e^y-e^{-y}\big)$, $y\ge 0$ is not a 
CD-function, since
$F(y)/y$ is decreasing near $0$. Note also that in \eqref{twopointalpha} we 
used 
that $Lv(x)=v(x)-v(\tilde{x})>0$, which follows from ${\mathcal 
L}_\alpha(v)(x)>0$.
\end{example}

\begin{example} \label{exa:triangle}{\ }

\begin{minipage}{0.7\textwidth}
We next consider the case of a triangle, i.e., $V=\{x_*,x_1,x_2\}$ and 
$E=\{x_*x_1,x_*x_2,
x_1x_2\}$. Again, we look at the most simple case without weights and with 
$\mu\equiv 1$. Let $v\in \iR^V$ and set $z_j=v(x_j)$  for $j\in\{*,1,2\}$ and 
$a_j=z_*-z_j$ for $j\in\{1,2\}$. 
\end{minipage}
\begin{minipage}{0.3\textwidth}
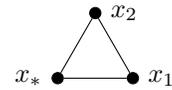
\begin{figure}[H]
\begin{tikzpicture}[main_node/.style={circle,fill=black,draw,minimum 
size=0.2em,inner sep=1.5pt]}]

    \node[main_node] [label={[distance=0.1cm]180:$x_{\ast}$}](1) at (0,0) {};
    \node[main_node] [label={[distance=0.1cm]00:$x_1$}](2) at (1, 0) {};
    \node[main_node] [label={[distance=0.1cm]00:$x_2$}](3) at (0.5,0.866025) {};

    \draw (1) -- (2) -- (3) -- (1);
\end{tikzpicture}
\caption{Triangle}
\label{fig:triangle} 
\end{figure}
\end{minipage}

Now, 
\begin{align*}
\Delta \Psi_{\Upsilon'}(v)(x_*) & = 
\Psi_{\Upsilon'}(v)(x_1)+\Psi_{\Upsilon'}(v)(x_2) - 2\Psi_{\Upsilon'}(v)(x_*)\\
& = 
\Upsilon'(z_*-z_1)+\Upsilon'(z_2-z_1)+\Upsilon'(z_*-z_2)+\Upsilon'(z_1-z_2)\\
& \;\;\;-2\big(\Upsilon'(z_1-z_*)+\Upsilon'(z_2-z_*)   \big)\\
& = e^{a_1}+e^{a_1-a_2}+e^{a_2}+e^{a_2-a_1}-2e^{-a_1}-2e^{-a_2}=:f(a_1,a_2).
\end{align*}
Further,
\begin{align*}
Lv(x_*)& =(z_*-z_1)+(z_*-z_2)= a_1+a_2,\\
Lv(x_1) & = (z_1-z_*)+(z_1-z_2)=a_2-2a_1,\\
Lv(x_2) & = (z_2-z_*)+(z_2-z_1)=a_1-2a_2.
\end{align*}
We see that $Lv$ has a positive maximum at $x_*$ if and only if $a_j\ge 0$ for 
$j=1,2$ and $a_1+a_2>0$.
Assuming this, by symmetry, we may assume without loss of generality that $0\le 
a_1\le a_2$. 
Then
\[
\frac{\partial f}{\partial a_2}=-e^{a_1-a_2}+e^{a_2}+e^{a_2-a_1}+2e^{-a_2}>0,
\]
and thus $f(a_1,a_2)\ge f(a_1,a_1)$, which in turn yields
\begin{align*}
\Delta \Psi_{\Upsilon'}(v)(x_*) & \ge F\big(Lv(x_*)\big),
\end{align*}
where
\[
F(a)=2\Big(e^{\frac{a}{2}}+1-2e^{-\frac{a}{2}}\Big).
\]
Observe that $F$ is not a CD-function, since $F(a)=3a-a^2/2+O(a^3)$ as $a\to 
0+$, which implies that 
$F(x)/x$ is not increasing near $0$. However, one can find many CD-functions 
$\tilde{F}$ with $F\ge \tilde{F}$
on $[0,\infty)$, e.g.\ $\tilde{F}(a)=4\sinh(a/2)$,
and so CD($\tilde{F}$;0) holds for any such function.
\end{example}

The aforementioned examples are special cases of the class of complete graphs. 
Next, let us treat complete graphs in general.

%**********************************************************************
\begin{example}
Let $G=(V,E)$ be a complete graph with $D+1$ vertices, $D \in \N$. That is, for 
every pair of 
vertices $x,y\in V$ with
$x\neq y$ we have $x\sim y$. Let $V=\{x_0,x_1,\ldots,x_D\}$ and $\alpha\in 
[0,\frac{1}{2}]$.
We consider the case without edge weights and with $\mu(y)=\mu_0>0$ for all 
$y\in V$.
Suppose that $v:\,V\rightarrow \iR$ is such that
\[
\mathcal{L}_\alpha(v)(x_0)>0\quad \mbox{and} \quad 
\mathcal{L}_\alpha(v)(x_0)\ge \mathcal{L}_\alpha(v)(x_j)
\quad \mbox{for all}\;j=1,\ldots,D. 
\]
Setting
\[
F_\alpha(a):=\frac{D}{\mu_0^2}e^{-\frac{\alpha\mu_0}{D}a}\big(e^{\frac{\mu_0}{D}
a}-1\big)
\big( De^{-\frac{\mu_0}{D}a}+1\big),\quad a\ge 0,
\]
we claim that
\begin{equation} \label{estimatecompletegraphs}
\mathcal{C}_\alpha(v)(x_0)\ge F_\alpha\big(Lv(x_0)\big).
\end{equation}
Indeed, putting $z_k=v(x_k)$ for $k=0,1,\ldots, D$ and $\psi(a)=e^{-\alpha 
a}(e^a-1)$, $a\in \iR$, we have
\begin{align*}
\mathcal{C}_\alpha(v)(x_0) & =\frac{1}{\mu_0} \sum_{k=1}^D e^{\alpha(z_k-z_0)} 
\Big(\Psi_{\Upsilon'}(v)(x_k)-
\Psi_{\Upsilon'}(v)(x_0)\Big)\\
& = \frac{1}{\mu_0^2} \sum_{k=1}^D e^{\alpha(z_k-z_0)} \Big( \sum_{j=0,\,j\neq 
k}^D e^{z_j-z_k}
-\sum_{j=1}^D e^{z_j-z_0}\Big)\\
& =  \frac{1}{\mu_0^2} \sum_{k=1}^D e^{\alpha(z_k-z_0)} 
\Big(\sum_{j=1}^D \big(e^{z_j-z_k}-e^{z_j-z_0}\big)+e^{z_0-z_k}-1\Big)\\
& =  \frac{1}{\mu_0^2} \sum_{k=1}^D e^{-\alpha(z_0-z_k)} 
\big(e^{z_0-z_k}-1\big) \Big(\sum_{j=1}^D e^{z_j-z_0}+1\Big)\\
& \ge  \frac{D}{\mu_0^2}\,\psi\big(\frac{1}{D}\,\sum_{k=1}^D 
(z_0-z_k)\big)\Big(\sum_{j=1}^D e^{z_j-z_0}+1\Big)\\
& =  \frac{D}{\mu_0^2}\,\psi\big( \frac{\mu_0}{D} Lv(x_0) 
\big)\Big(\sum_{j=1}^D 
e^{z_j-z_0}+1\Big),
\end{align*}
by convexity of $\psi$ on $\iR$. Since $\psi$ is positive on $(0,\infty)$ and 
$Lv(x_0)\ge \mathcal{L}_\alpha(v)(x_0)>0$,
the $\psi$-term in the last line is positive. Using the convexity of the 
exponential function we may thus further deduce
that
\begin{align*}
\mathcal{C}_\alpha(v)(x_0) &  \ge \frac{D}{\mu_0^2}\,\psi\big( \frac{\mu_0}{D} 
Lv(x_0) \big)
\Big(D \exp\big(\frac{1}{D}\sum_{j=1}^D (z_j-z_0)\big)+1\Big)\\
& = \frac{D}{\mu_0^2}\,\psi\big( \frac{\mu_0}{D} Lv(x_0) \big)
\Big(D \exp\big(-\frac{\mu_0}{D}Lv(x_0)\big)+1\Big)=F_\alpha\big(Lv(x_0)\big).
\end{align*} 
Note that $F_\alpha$ is not a CD-function in general. Note that in the case 
$\alpha =0, \mu_0 = 1$, we obtain
\begin{align*}
F_0(a)=D \big(e^{\frac{a}{D}} - De^{\frac{-a}{D}} + (D-1)\big)\,,
\end{align*}
which reduces to $F_0(a)= e^{a} - e^{-a}= 2 \sinh(a)$ in the case 
$D=1$ and to $F_0(a)=2 \big(e^{\frac{a}{2}} - 2e^{\frac{-a}{2}} + 1\big)$ 
in the case of $D=2$. Thus, the case of general complete graphs is consistent 
with \autoref{exa:twopoints} and \autoref{exa:triangle}.  
\end{example}
%**********************************************************************         

\begin{example} \label{exa:chain}{\ }

\medskip

\begin{minipage}{0.65\textwidth}
The next example is a path consisting of three vertices. Let 
$V=\{x_*,x_1,x_2\}$ and $E=\{x_*x_1,x_*x_2\}$.
We consider the case without weights and with $\mu(x_i)=1$, $i=1,2$ and 
$\mu(x_*)=2$, so $\mu$ coincides
at every vertex with its degree. Letting $v\in \iR^V$ we use the same notation 
as in \autoref{exa:triangle}.
\end{minipage}
\begin{minipage}{0.35\textwidth}
\begin{figure}[H]
\centering
\begin{tikzpicture}[main_node/.style={circle,fill=black,draw,minimum 
size=0.2em,inner sep=1.5pt]}]
    \node[main_node] [label={[distance=0.2cm]90:$x_1$}](1) at (0,0) {};
    \node[main_node] [label={[distance=0.2cm]90:$x_{\ast}$}](2) at (1,0) {};
    \node[main_node] [label={[distance=0.2cm]90:$x_2$}](3) at (2,0) {};
    \draw (1) -- (2) -- (3);
\end{tikzpicture}
\label{fig:chain}
\caption{A chain-like graph}
\end{figure}
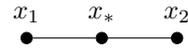
\end{minipage}

\medskip

Then, we have for the vertex $x_*$
\begin{align*}
\Delta \Psi_{\Upsilon'}(v)(x_*) & = 
\frac{1}{2}\Big(\Psi_{\Upsilon'}(v)(x_1)+\Psi_{\Upsilon'}(v)(x_2) - 
2\Psi_{\Upsilon'}(v)(x_*)\Big)\\
& = \frac{1}{2}\Big(\Upsilon'(z_*-z_1)+\Upsilon'(z_*-z_2)- 2\cdot \frac{1}{2} 
\big(\Upsilon'(z_1-z_*)
+\Upsilon'(z_2-z_*) \big)\Big)\\
& =  \frac{1}{2}\Big(e^{a_1}+e^{a_2}-e^{-a_1}-e^{-a_2}\Big)=:\tilde{f}(a_1,a_2),
\end{align*}
and 
\[
Lv(x_*)= \frac{1}{2}(a_1+a_2),\quad Lv(x_1)=-a_1,\quad Lv(x_2)=-a_2.
\]
$Lv$ has a positive maximum at $x_*$ if and only if $3a_1+a_2\ge 0$, 
$3a_2+a_1\ge 0$ and $a_1+a_2>0$. 
Assuming this, by symmetry, we may assume that $a_1\le a_2$. Then $3a_1+a_2\ge 
0$ implies that $3a_2+a_1\ge 0$,
so the first condition is the stronger one and will be assumed. 

Suppose first that $a_1<0$. The function $\tilde{f}$ is strictly increasing 
w.r.t.\ $a_2$,
and thus $\tilde{f}(a_1,a_2)\ge \tilde{f}(a_1,-3a_1)$. This leads to
\[
\Delta \Psi_{\Upsilon'}(v)(x_*)\ge F_1\big(Lv(x_*)\big)\quad \mbox{with}\;\;
F_1(a)= \frac{1}{2}\Big(e^{3a}+e^{-a}-e^{a}-e^{-3a}\Big),
\]
since for $a_2=-3a_1$ we have $Lv(x_*)=-a_1>0$.

Next, suppose that $a_1> 0$. Then $\tilde{f}(a_1,a_2)\ge \tilde{f}(a_1,a_1)$, 
which yields
\[
\Delta \Psi_{\Upsilon'}(v)(x_*)\ge F_2\big(Lv(x_*)\big)\quad \mbox{with}\;\;
F_2(a)= e^{a}-e^{-a}=2\sinh a.
\]
Note that here $Lv(x_*)=a_1>0$. The case $a_1=0$ leads to the function $F_2$ as 
well.

One can show that $F_1(a)\ge F_2(a)$ for all $a\ge 0$. 
Hence CD($2\sinh$;0) holds at $x_*$. Note that here, we work with the same 
CD-function as in \autoref{exa:twopoints}.

Let us now study an endpoint of the path. At the vertex $x_1$, we have
\begin{align*}
\Delta \Psi_{\Upsilon'}(v)(x_1) & = 
\Psi_{\Upsilon'}(v)(x_*)-\Psi_{\Upsilon'}(v)(x_1)\\
& = \frac{1}{2} \big(\Upsilon'(z_1-z_*)
+\Upsilon'(z_2-z_*)\big)-\Psi_{\Upsilon'}(v)(z_*-z_1)\\
& = \frac{1}{2} \big( e^{-a_1}+e^{-a_2}\big)-e^{a_1}=:\hat{f}(a_1,a_2).
\end{align*}
The condition $Lv(x_1)\ge Lv(x_*)$ is equivalent to
$3a_1+a_2\le 0$, and $Lv(x_1)>0$ means that $a_1=-Lv(x_1)<0$.
Since $\hat{f}$ is strictly decreasing w.r.t.\ $a_2$, we obtain 
$\hat{f}(a_1,a_2)\ge \hat{f}(a_1,-3a_1)$ (by increasing
$a_2$ for fixed $a_1<0$). This gives
\[
\Delta \Psi_{\Upsilon'}(v)(x_1)\ge F_3\big(Lv(x_*)\big)\quad \mbox{with}\;\;
F_3(a)=  \frac{1}{2} \big(e^{a}+e^{-3a}\big)-e^{-a}.
\]
Observing that
\[
F_3(a)= \frac{1}{2} \,e^{-a} \big(e^a-e^{-a}\big)^2\quad 
\mbox{and}\;\;F_3''(a)=F_3(a)+4e^{-3a}>0
\]
we easily see that $F_3$ is a CD-function. Hence, for $i=1,2$, the 
condition CD($F_3$;0) holds at $x_i$.

Concerning the entire graph, it follows from \autoref{minCD} that the 
CD($F$;0) holds with $F=\min(F_2,F_3)=F_3$.
\end{example}

%***********************************************************************
\subsection{Ricci-flat graphs}\label{subsec:Ricci-flat}
%***********************************************************************

Next, we show that Ricci-flat graphs satisfy the condition CD($F$;0) with a 
CD-function $F$ that we can compute explicitly. The notion of Ricci-flat 
graphs has been introduced in \cite{ChYa96} as a notion of graphs with 
nonnegative curvature. 

\begin{definition} \label{defRicci}
Let $G=(V,E)$ be a $D$-regular graph with $D\in \iN$, let $x \in V$ and $N(x) 
= \{x\} \cup \{y \in V | y \sim x\}$ . $G$ is called Ricci-flat at $x$ 
if, there exist maps $\eta_1,\ldots,\eta_D: N(x) \rightarrow V$ 
such that the following conditions are satisfied:
\begin{itemize}
\item[(i)] $\eta_i(y) \sim y$ for all $i\in \{1,\ldots,D\}$ and all $y \in 
N(x)$.
\item[(ii)] $\eta_i(y) \neq \eta_j(y)$ for $y \in 
N(x)$ and $i \neq j$.
\item[(iii)] $\bigcup_{j=1}^D \eta_i(\eta_j(x))=\bigcup_{j=1}^D 
\eta_j(\eta_i(x))$ for all $i\in \{1,\ldots,D\}$.
\end{itemize}
The graph $G$ is called Ricci-flat if it is Ricci-flat at every vertex $x 
\in V$.
\end{definition}

The graph $\Z^d$ with $x \sim y \Leftrightarrow |x-y|_1=1$ is Ricci-flat. Any
Cayley graph of a finitely generated group is Ricci-flat if the generating 
system is closed under conjugation. 

%%%%%%%%%%%%%%%%%%%%%%%%%%%%%%%%%%%%%%%%%%%%%%%%%%%%%%%%%%%%%%%%
\begin{minipage}
{0.95\textwidth} 
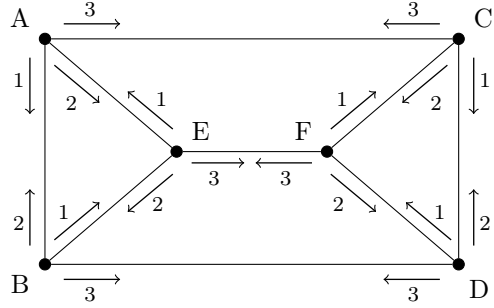
\begin{figure}[H]
\begin{tikzpicture}[main_node/.style={circle,fill=black,draw,minimum 
size=0.2em,inner sep=1.5pt]}]

    \node[main_node] [label={[distance=0.2cm]160:A}] (A) at (0,0) {};
    \node[main_node] [label={[distance=0.2cm]20:C}] (B) at (5.5, 0)  {};
    \node[main_node] [label={[distance=0.2cm]280:D}] (C) at (5.5, -3) {};
    \node[main_node] [label={[distance=0.2cm]190:B}] (D) at (0, -3)  {};
    \node[main_node] [label={[distance=0.2cm]20:E}] (E) at (1.75, -1.5) {};
    \node[main_node] [label={[distance=0.2cm]160:F}] (F) at (3.75, -1.5) {};

%% senkrechte und waagerechte Pfeile %%   
 
%% 1 senkrecht %% 
    \draw[arrows=->, line width=0.5pt] (-0.2, -0.25) -- (-0.2, -1);
		\draw (-0.35,-0.55) node {\footnotesize{1}};    
	
	\draw[arrows=->, line width=0.5pt] (5.7, -0.25) -- (5.7, -1);
		\draw (5.85,-0.55) node {\footnotesize{1}}; 

%% 2 senkrecht %%				
    \draw[arrows=<-, line width=0.5pt] (-0.2, -2) -- (-0.2, -2.75);
         \draw (-0.35,-2.45) node {\footnotesize{2}};         
		
    \draw[arrows=<-, line width=0.5pt] (5.7, -2) -- (5.7, -2.75);
         \draw (5.85,-2.45) node {\footnotesize{2}};

%% 3 waagerecht %% 
	\draw[arrows=->, line width=0.5pt] (0.25, 0.2) -- (1, 0.2);
		\draw (0.6,0.4) node {\footnotesize{3}}; 	
		
	\draw[arrows=<-, line width=0.5pt] (4.5, 0.2) -- (5.25, 0.2);
		\draw (4.9,0.4) node {\footnotesize{3}}; 	

	\draw[arrows=<-, line width=0.5pt] (2.65, -1.65) -- (1.95, -1.65);
		\draw (2.25,-1.85) node {\footnotesize{3}}; 	
		    
	\draw[arrows=<-, line width=0.5pt] (2.8, -1.65) -- (3.55, -1.65);
		\draw (3.2,-1.85) node {\footnotesize{3}}; 	

    \draw[arrows=->, line width=0.5pt] (0.25, -3.2) -- (1, -3.2);
		\draw (0.6,-3.4) node {\footnotesize{3}}; 	
		    
	\draw[arrows=<-, line width=0.5pt] (4.5, -3.2) -- (5.25, -3.2);
		\draw (4.9,-3.4) node {\footnotesize{3}};

%% schräge Pfeile %%

%% 1 schräg %%
    \draw[arrows=->, line width=0.5pt] (1.7, -1.2) -- (1.1, -0.7); 
    	\draw (1.55, -0.8) node {\footnotesize{1}};

    \draw[arrows=<-, line width=0.5pt] (0.725, -2.175) -- (0.125, -2.675); 
    	\draw (0.25, -2.3) node {\footnotesize{1}};	
    
    \draw[arrows=->, line width=0.5pt] (5.4, -2.675) -- (4.8, -2.175); 
    	\draw (5.25, -2.3) node {\footnotesize{1}};
    	
	\draw[arrows=<-, line width=0.5pt] (4.4, -0.7) -- (3.8, -1.2); 
    	\draw (3.95, -0.8) node {\footnotesize{1}};
    	
%% 2 schräg %%    	
    \draw[arrows=->, line width=0.5pt] (3.8, -1.8) -- (4.4, -2.3); 
    	\draw (3.9, -2.2) node {\footnotesize{2}};	
    
    \draw[arrows=->, line width=0.5pt] (1.7, -1.8) -- (1.1, -2.3); 
    	\draw (1.5, -2.2) node {\footnotesize{2}};	
    
    \draw[arrows=<-, line width=0.5pt] (0.725, -0.85) -- (0.125, -0.35); 
    	\draw (0.35, -0.85) node {\footnotesize{2}};	
    	
  	\draw[arrows=<-, line width=0.5pt] (4.75, -0.85) -- (5.35, -0.35); 
    	\draw (5.2, -0.85) node {\footnotesize{2}};

    \draw (A) -- (B) -- (C) -- (D) -- (A) -- (E) -- (D) (E) -- (F) -- (B) (F) 
-- 
(C);
    
\end{tikzpicture}
\caption{A Ricci flat graph together with the maps $\eta_i$, $i \in 
\{1,2,3\}$.}
\label{fig:S3}
\end{figure}
\end{minipage}

\medskip

It is proved in \cite{LiYa10} that Ricci-flat graphs satisfy $\Gamma_2(f,f) 
\geq 0$ for every $f \in V \to \R$, thus it is reasonable to think of 
Ricci-flat graphs as graphs with nonnegative curvature. One can also think of 
Ricci-flat graphs as generalizations of Cayley graphs of Abelian groups. We 
will make use of the following property of Ricci-flat graphs, 
which is proved in \cite{Mue14}.

\begin{lemma} \label{lem-Ricflat}
Let $G=(V,E)$ be a $D$-regular graph which is Ricci-flat at the vertex $x\in 
V$. 
Let $\eta_1,\ldots,\eta_D$ be the maps as in \autoref{defRicci}.
\begin{itemize}
\item[(i)] For any function $u: V\rightarrow \iR$ and for all $i\in 
\{1,\ldots,D\}$ one has
\begin{align} \label{sumproperty}
\sum_{j=1}^D u(\eta_i(\eta_j(x)))=\sum_{j=1}^D u(\eta_j(\eta_i(x))).
 \end{align}
 \item[(ii)] For every $i\in \{1,\ldots,D\}$ there exists a unique $i^*\in 
\{1,\ldots,D\}$ such that $\eta_{i}(\eta_{i^*}(x))=x$.
 Moreover, the map $i\mapsto i^*$ is a permutation of $\{1,\ldots,D\}$. 
\end{itemize}
\end{lemma}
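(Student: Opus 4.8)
The plan is to derive both parts purely from the three axioms of \autoref{defRicci} together with $D$-regularity, the crucial manoeuvre being to upgrade the set-equality in condition (iii) to an equality counting multiplicities. First I would record an auxiliary distinctness fact. Fix $i$. Since $\eta_i(x)\sim x$, the vertex $\eta_i(x)$ lies in $N(x)$, so condition (ii) applies with $y=\eta_i(x)$ and shows that $\eta_1(\eta_i(x)),\ldots,\eta_D(\eta_i(x))$ are pairwise distinct; hence the set $R_i:=\{\eta_j(\eta_i(x)):1\le j\le D\}$ has exactly $D$ elements. Applying condition (ii) instead at $y=x$ shows $\eta_1(x),\ldots,\eta_D(x)$ are $D$ distinct neighbors of $x$, and because $G$ is $D$-regular these exhaust the neighborhood of $x$.

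For part (i), I would combine this with condition (iii). The set $L_i:=\{\eta_i(\eta_j(x)):1\le j\le D\}$ is the image of the $D$-element set $\{\eta_j(x)\}_j$ under the single map $\eta_i$, so a priori $|L_i|\le D$. But condition (iii) gives $L_i=R_i$, and we have just seen $|R_i|=D$; therefore $|L_i|=D$, which forces the assignment $j\mapsto\eta_i(\eta_j(x))$ to be injective. Consequently the sequence $(\eta_i(\eta_j(x)))_{j=1}^D$ enumerates $L_i$ without repetition, and likewise $(\eta_j(\eta_i(x)))_{j=1}^D$ enumerates $R_i=L_i$ without repetition. Summing $u$ over either enumeration yields $\sum_{z\in L_i}u(z)$, so the two sums in \eqref{sumproperty} coincide.

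For part (ii), existence of $i^*$ comes from locating $x$ inside $L_i$. Since $\eta_i(x)\sim x$ and the graph is undirected, $x$ is a neighbor of $\eta_i(x)$; as $\eta_i(x)$ has degree $D$, its $D$ distinct neighbors $\eta_1(\eta_i(x)),\ldots,\eta_D(\eta_i(x))$ are all of them, whence $x\in R_i$. By condition (iii), $x\in L_i$, meaning $x=\eta_i(\eta_{i^*}(x))$ for some index $i^*$; uniqueness is immediate because the enumeration of $L_i$ carries no repetition. To see that $i\mapsto i^*$ is a permutation it suffices, on the finite set $\{1,\ldots,D\}$, to prove injectivity: if $i^*=k^*$, then $w:=\eta_{i^*}(x)$ is a single vertex lying in $N(x)$ with $\eta_i(w)=x=\eta_k(w)$, and condition (ii) at $y=w$ forces $i=k$.

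I expect the only genuinely delicate step to be the multiplicity upgrade in part (i): condition (iii) is stated only as an equality of sets, and one must argue through the cardinality count $|L_i|=|R_i|=D$ together with $D$-regularity that the map $j\mapsto\eta_i(\eta_j(x))$ admits no collisions, so that the set equality actually transfers to the summation. Once this injectivity is secured, the remainder is routine bookkeeping with the axioms.
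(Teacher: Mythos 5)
Your proof is correct. Note that the paper itself contains no proof of \autoref{lem-Ricflat}: it states the lemma and defers the argument to \cite{Mue14}, so there is no in-paper proof to compare against; your write-up supplies the missing argument, and it is in substance the standard one. The decisive point is exactly the one you flag: condition (iii) of \autoref{defRicci} is only a set identity, and to transfer it to sums one must first observe that the $D$ vertices $\eta_j(\eta_i(x))$, $1\le j\le D$, are pairwise distinct (condition (ii) of the definition applied at $y=\eta_i(x)\in N(x)$), so that $|R_i|=D$, whence the set equality $L_i=R_i$ forces $|L_i|=D$ and hence injectivity of $j\mapsto\eta_i(\eta_j(x))$. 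All auxiliary facts you invoke are used legitimately: $\eta_j(x)\in N(x)$ by condition (i), so the compositions are defined; $D$-regularity makes $\{\eta_j(y)\}_{j=1}^D$ exhaust the neighbourhood of any $y\in N(x)$, which gives $x\in R_i$ since the graph is undirected; and uniqueness of $i^*$ follows from the injectivity just established, while injectivity of $i\mapsto i^*$ follows from condition (ii) at $y=\eta_{i^*}(x)$, which is indeed in the domain $N(x)$ of the maps. On a finite index set this yields the permutation property, completing part (ii).
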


As mentioned above, we can show that Ricci-flat graphs satisfy the condition 
CD($F$;0) for a function $F$ that we can compute explicitly. This is 
the 
content of the next result. Note that, in several examples, it is possible to 
prove the condition CD($F$;0) with a larger function $\widetilde{F}$, i.e., the 
function $F$ given in \autoref{theo:CDRiccimain} is not best possible.

\begin{theorem} \label{theo:CDRiccimain}
Let $G=(V,E)$ be a $D$-regular unweighted Ricci-flat graph with $D\ge 2$. 
Assume 
that $\mu(y)=\mu_0>0$ for all $y\in V$. Then CD($F$;0) holds 
with
\begin{align} \label{RicciF}
F(a)=\frac{D}{\mu_0^2} \exp\big(-\frac{\mu_0}{D}a\big)\Big[ 
\Upsilon\Big(\frac{2\mu_0}{D}a \Big)
+(D-1) \Upsilon\Big(- \frac{2\mu_0}{D(D-1)}a \Big)\Big].
\end{align}
\end{theorem}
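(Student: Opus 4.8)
The plan is to establish the two things hidden in the phrase ``CD($F$;0) holds with $F$ as in \eqref{RicciF}'': that this $F$ is a CD-function in the sense of \autoref{def:F}, and that the inequality \eqref{CDUngleichung} (with $\alpha=0$) holds at every $x$. The first task I would dispatch quickly by \emph{not} analyzing $F$ directly but recognizing it as a rescaling of the family in \autoref{CDFexample}. Expanding $\Upsilon(z)=e^z-1-z$ in \eqref{RicciF}, the two linear contributions cancel and one is left with a combination of three exponentials; matching exponents and coefficients identifies $F(a)=\tfrac{D^2}{\mu_0^2}\,\widetilde F\big(\tfrac{2\mu_0}{D-1}a\big)$, where $\widetilde F$ is the function \eqref{lambdaform} with $\lambda=\tfrac1D\in(0,1)$ (admissible since $D\ge 2$). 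As $\widetilde F$ is a CD-function by \autoref{CDFexample}, \autoref{prop-perm1} (closure under $\alpha F(\beta\cdot)$) finishes this part.

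For the inequality, I fix $x$ and denote by $\eta_1,\dots,\eta_D$ the Ricci-flat maps at $x$, so the neighbours of $x$ are exactly $\eta_1(x),\dots,\eta_D(x)$. Since $\alpha=0$, \autoref{def:CD} asks me to prove $\Delta\Psi_{\Upsilon'}(v)(x)\ge F(Lv(x))$ for every $v$ with $Lv(x)>0$ and $Lv(x)\ge Lv(\eta_i(x))$ for all $i$. Setting $a_i=v(\eta_i(x))-v(x)$ and $b_{ij}=v(\eta_j(\eta_i(x)))-v(x)$ and using $\Upsilon'(z)=e^z-1$, a direct expansion (with the constants cancelling as in the examples of \autoref{sec:curv-ineq}) yields
\[
\mu_0^2\,\Delta\Psi_{\Upsilon'}(v)(x)=\sum_{i,j=1}^D e^{b_{ij}-a_i}-D\sum_{i=1}^D e^{a_i},\qquad Lv(x)=-\tfrac1{\mu_0}\sum_{i=1}^D a_i .
\]

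Now both parts of \autoref{lem-Ricflat} enter. By part (ii) there is, for each first step $i$, a unique second step realizing $\eta_i(\eta_{i^*}(x))=x$, i.e.\ $b_{i^*,i}=0$; these $D$ ``returning'' entries contribute precisely $\sum_i e^{-a_i}$ and \emph{must be retained}, because they are what compensates the negative term $-D\sum_i e^{a_i}$. Part (i), rewritten through \eqref{sumproperty}, says that the matrix $(b_{ij})$ has matching row and column sums, $\sum_j b_{ij}=\sum_j b_{ji}$ for every $i$. Finally, computing $Lv(\eta_i(x))=\tfrac1{\mu_0}\big(Da_i-\sum_j b_{ij}\big)$ turns the maximality hypothesis into the family of lower bounds $\sum_j b_{ij}\ge Da_i+\sum_k a_k$. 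Applying convexity of the exponential (Jensen/AM--GM) to the $D-1$ non-returning entries of each row together with these bounds produces, after reinstating the prefactor $e^{-\mu_0 Lv(x)/D}=e^{\bar a}$ with $\bar a=\tfrac1D\sum_i a_i$, exactly the term $D(D-1)\Upsilon\!\big(-\tfrac{2\mu_0}{D(D-1)}Lv(x)\big)$ of \eqref{RicciF}.

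The heart of the proof, and the step I expect to be the main obstacle, is to pass from these constraints to the \emph{full} closed form $F(Lv(x))$, including the single $\Upsilon\!\big(\tfrac{2\mu_0}{D}Lv(x)\big)$ term. The delicate point is that one may not estimate $\sum_i e^{-a_i}$ and $-D\sum_i e^{a_i}$ separately: for a single large $a_i$ the naive convexity bounds point the wrong way, and it is precisely the row-equals-column-sum identity from \autoref{lem-Ricflat}(i) that forbids one row sum from growing freely, forcing correspondingly large entries in the matching column and hence extra positive mass in the double sum. I would therefore carry the identity and the maximality bounds jointly, equivalently solving the convex minimization of $\sum_{i,j}e^{b_{ij}-a_i}-D\sum_i e^{a_i}$ under the linear constraints $\sum_j b_{ij}=\sum_j b_{ji}$, the forced zeros $b_{i^*,i}=0$, and $\sum_j b_{ij}\ge Da_i+\sum_k a_k$. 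The optimizer is the balanced configuration in which the $D-1$ genuine second-order differences are equal while one per column is singled out by the zeros of part (ii); this split is exactly what produces the coefficients $1$ and $D-1$ in \eqref{RicciF} and makes the resulting bound sharp, reproducing $F$.
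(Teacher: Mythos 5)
Your preparatory work is sound and matches the paper: the identification of $F$ as a rescaling of \autoref{CDFexample} with $\lambda=1/D$ (via \autoref{prop-perm1}), the expansion $\mu_0^2\,\Delta\Psi_{\Upsilon'}(v)(x)=\sum_{i,j}e^{b_{ij}-a_i}-D\sum_i e^{a_i}$, the forced zeros $b_{i^*,i}=0$ from \autoref{lem-Ricflat}(ii), and the translation of maximality into $\sum_j b_{ij}\ge Da_i+\sum_k a_k$ are all correct. The gap is exactly where you suspect it, and neither of your two routes closes it. Your concrete step --- Jensen on the $D-1$ non-returning entries of each row, followed by the maximality bound on the row sums --- does \emph{not} produce the term $D(D-1)\Upsilon\bigl(-\tfrac{2\mu_0}{D(D-1)}Lv(x)\bigr)$; what it produces is the lower bound
\[
\mu_0^2\,\Delta\Psi_{\Upsilon'}(v)(x)\;\ge\;\sum_{i=1}^D g(a_i),
\qquad g(t)=e^{-t}+(D-1)\,e^{\frac{t+S}{D-1}}-D\,e^{t},
\qquad S=\sum_k a_k=-\mu_0\,Lv(x),
\]
whose target value is $\mu_0^2F(Lv(x))=D\,g(S/D)$. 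This reduced inequality $\sum_i g(a_i)\ge D\,g(S/D)$ is \emph{false} for $D\ge 3$: take $D=3$ and $a=(S/3+\epsilon,\,S/3-\epsilon,\,S/3)$ with $S\to 0^-$; then $\sum_i g(a_i)-3g(S/3)\to 4\cosh(\epsilon/2)-4\cosh\epsilon<0$. (Note that your reduction never actually uses \autoref{lem-Ricflat}(i): maximality alone bounds each row sum, and the per-row Jensen then throws away precisely the row/column coupling that, as you yourself observe in prose, is indispensable.) So this reduction is irreparably lossy, and no further manipulation of the one-variable inequality can rescue it.

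Your fallback --- ``solving the convex minimization'' of $\sum_{i,j}e^{b_{ij}-a_i}-D\sum_i e^{a_i}$ under the constraints, with the assertion that the balanced configuration is the optimizer --- is not a proof: the objective is a difference of convex functions (the term $-D\sum_i e^{a_i}$ is concave), so the problem is not convex and the global minimizer cannot be identified by stationarity or symmetry heuristics; and the claimed optimality of the balanced configuration \emph{is} the theorem, restated. For comparison, the paper closes this step with three devices absent from your outline: it applies Jensen per \emph{second}-step index (keeping the prefactors $e^{a_j}$ outside the inner sum), which is exactly what makes \autoref{lem-Ricflat}(i) usable to convert column sums into row sums so that maximality can enter; this yields brackets depending on the paired quantities $a_j+a_{j^*}$; since $j\mapsto j^*$ need not be an involution, it then invokes the rearrangement inequality (after sorting) to replace $j^*$ by the reversal $j'=D+1-j$, so that the pairing becomes symmetric; finally it pairs $j$ with $j'$, uses convexity of the exponential on the prefactors, and uses the strict convexity of $F$ itself (from \autoref{CDFexample}) to assemble the per-pair terms into $F\bigl(Lv(x)\bigr)$ by one more application of Jensen. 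Your sharpness intuition (balanced configurations, coefficients $1$ and $D-1$) is correct --- the paper confirms it on $\iZ^d$ --- but it is a consequence of the proof, not a substitute for these missing steps.
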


\begin{proof} We first verify that $F$ is a CD-function. Setting 
$\eta=\frac{\mu_0}{D}$ and 
$\lambda=\frac{1}{D}$ we can
write
\begin{align*}
F(a) & =\frac{1}{\mu_0\eta}\,e^{-\eta a}\Big[e^{2\eta 
a}+(D-1)e^{-\frac{2\eta}{D-1}a}-D\Big]\\
& = \frac{1}{\eta^2}\,e^{-\eta a}\Big[\lambda e^{2\eta 
a}+(1-\lambda)e^{-\frac{2\eta}{D-1}a}-1\Big].
\end{align*}
We scale the argument by putting 
$\tilde{a}=\frac{2\eta}{1-\lambda}a=\frac{2\mu_0}{D-1}a$ and
introduce the function $\tilde{F}$ by means of $\tilde{F}(\tilde{a})=F(a)$. 
This gives
\begin{align*}
\tilde{F}(\tilde{a})=\frac{1}{\eta^2}\,e^{-\frac{1-\lambda}{2} \tilde{a}}
\Big[\lambda e^{(1-\lambda) \tilde{a}}+(1-\lambda)e^{-\lambda \tilde{a}}-1\Big].
\end{align*}
\autoref{CDFexample} and \autoref{prop-perm1} now imply that $\tilde{F}$, and 
thus also $F$, are
strictly convex CD-functions.

Let now $x\in V$ and $v\in \iR^V$ such that 
\[
Lv(x)>0 \quad \mbox{and}\quad Lv(x)\ge Lv(\eta_j(x))\;\;\mbox{for 
all}\;j=1,\ldots, D.
\]
Set $z=v(x)$, $z_i=v(\eta_i(x))$ and $z_{ij}=v(\eta_j(\eta_i(x)))$ for 
$i,j=1,\ldots, D$. We have
\begin{align}
\Delta \Psi_{\Upsilon'}(v)(x) & = \frac{1}{\mu_0}\,\sum_{i=1}^D 
\big[\Psi_{\Upsilon'}(v)(\eta_i(x))
-\Psi_{\Upsilon'}(v)(x)\big] \nonumber \\
 & = \frac{1}{\mu_0^2}\,\sum_{i=1}^D \sum_{j=1}^D \big[ 
\Upsilon'(z_{ij}-z_i)-\Upsilon'(z_j-z)\big]
 = \frac{1}{\mu_0^2}\,\sum_{i=1}^D \sum_{j=1}^D \big[ 
e^{z_{ij}-z_i}-e^{z_j-z}\big] \nonumber\\
 & = \frac{1}{\mu_0^2}\,\sum_{j=1}^D e^{z_j-z} \sum_{i=1}^D 
\big(e^{z_{ij}-z_i-z_j+z}-1\big). \label{psiform}
\end{align}
Setting $w_j=z-\frac{1}{2}z_j-\frac{1}{2}z_{j^*}$ and recalling that $z_{ 
j^{*}j}=z$, the inner sum can be written as
\begin{align*}
\sum_{i=1}^D \big(e^{z_{ji}-z_i-z_j+z}-1\big)=e^{2w_j}-1+\sum_{i=1, i\neq 
j^*}^D 
\big(e^{z_{ij}-z_i-z_j+z}-1\big). 
\end{align*}
By convexity of the exponential function, \autoref{lem-Ricflat} (i) and the 
local maximum property of $Lv$ at $x$ we may now estimate as follows.
\begin{align*}
\sum_{i=1, i\neq j^*}^D  &\big(e^{z_{ij}-z_i-z_j+z}-1\big)  \ge 
(D-1) \Big[ \exp\Big (\frac{1}{D-1}\sum_{i=1, i\neq j^*}^D 
[z_{ij}-z_i-z_j+z]\Big)-1\Big]\\
& = (D-1) \Big[ \exp\Big (\frac{1}{D-1}\sum_{i=1}^D 
[z_{ij}-z_i-z_j+z]-\frac{2w_j}{D-1}\Big)-1\Big]\\
& =  (D-1) \Big[ \exp\Big (\frac{1}{D-1}\sum_{i=1}^D 
[z-z_i+z_{ji}-z_j]-\frac{2w_j}{D-1}\Big)-1\Big]\\
& = (D-1) \Big[ \exp\Big 
(\frac{\mu_0}{D-1}[Lv(x)-Lv(\eta_j(x)]-\frac{2w_j}{D-1}\Big)-1\Big]\\
& \ge  (D-1) \Big[ \exp\Big (-\frac{2w_j}{D-1}\Big)-1\Big].
\end{align*}
Combining this and the previous identities yields
\begin{align} \label{Ricflatstep}
\Delta \Psi_{\Upsilon'}(v)(x)\ge  \frac{1}{\mu_0^2}\,\sum_{j=1}^D e^{z_j-z} 
\Big(e^{2w_j}-1+ (D-1) \Big[ \exp\Big (-\frac{2w_j}{D-1}\Big)-1\Big]   \Big).
\end{align}

The next step consists in symmetrizing the sum. Since we do not have 
$(j^*)^*=j$ 
in general,
we use the rearrangement inequality, which says that for all permutations 
$\pi$ on
$\{1,\ldots,D\}$ and all $0\le a_1\le a_2\le \ldots \le a_D$ and all $0\le 
b_1\le b_2\le \ldots \le b_D$, one 
has
\begin{align} \label{Rea}
\sum_{j=1}^D a_{\pi(j)} b_j\ge \sum_{j=1}^D a_{D+1-j} b_j.
\end{align}

Without restriction of generality, we may assume that $z_1\le z_2\le \ldots \le 
z_D$. We set
$j':=D+1-j$ and $\tilde{w}_j=z-\frac{1}{2}z_j-\frac{1}{2}z_{j'}$. Using 
\eqref{Rea} we then have
\begin{align*}
\sum_{j=1}^D e^{z_j-z} \exp\Big (-\frac{2w_j}{D-1}\Big)
&  =\sum_{j=1}^D \exp\Big( z_j-z+\frac{1}{D-1}(z_j-2z)\Big) 
\exp\Big(\frac{1}{D-1}z_{j^*}\Big)\\
 & \ge \sum_{j=1}^D \exp\Big( z_j-z+\frac{1}{D-1}(z_j-2z)\Big) 
\exp\Big(\frac{1}{D-1}z_{j'}\Big)\\
&  = \sum_{j=1}^D e^{z_j-z}\exp\Big (-\frac{2\tilde{w}_j}{D-1}\Big).
\end{align*}
Furthermore,
\[
\sum_{j=1}^D e^{z_j-z} 
e^{2w_j}= \sum_{j=1}^D e^{z-z_{j^*}}= \sum_{j=1}^D e^{z-z_{j'}}=\sum_{j=1}^D 
e^{z_j-z} 
e^{2\tilde{w}_j}.
\]
These relations and \eqref{Ricflatstep} imply that
\begin{align} \label{Ricflatstep2}
\Delta \Psi_{\Upsilon'}(v)(x)\ge  \frac{1}{\mu_0^2}\,\sum_{j=1}^D e^{z_j-z} 
\Big(e^{2\tilde{w}_j}-1+ (D-1) \Big[ \exp\Big 
(-\frac{2\tilde{w}_j}{D-1}\Big)-1\Big]   \Big).
\end{align}

Compared to \eqref{Ricflatstep}, inequality \eqref{Ricflatstep2} has the 
advantage that 
$\tilde{w}_j=\tilde{w}_{j'}$, since $(j')'=j$. Employing this and the convexity 
of the exponential function
and $F$ we have
\begin{align*}
\Delta \Psi_{\Upsilon'}(v)(x) & \ge  \frac{1}{2\mu_0^2}\,\sum_{j=1}^D 
\big[e^{z_j-z}+e^{z_{j'}-z}\big] 
\Big(e^{2\tilde{w}_j}-1+ (D-1) \Big[ \exp\Big 
(-\frac{2\tilde{w}_j}{D-1}\Big)-1\Big]   \Big) \\
& \ge  \frac{1}{\mu_0^2}\,\sum_{j=1}^D  e^{-\tilde{w}_j}  
\Big(e^{2\tilde{w}_j}-1+ (D-1) \Big[ \exp\Big 
(-\frac{2\tilde{w}_j}{D-1}\Big)-1\Big]   \Big) \\
& = \frac{1}{D}\,\sum_{j=1}^D F\Big(\frac{D\tilde{w}_j}{\mu_0}\Big)
\ge F\Big(\sum_{j=1}^D \frac{\tilde{w}_j}{\mu_0}\Big)=F\big(Lv(x)\big).
\end{align*}
This proves the asserted inequality. 
\end{proof}

\medskip

\noindent It turns out that for Ricci-flat graphs with constant $\mu$, in 
general, the CD-function $F$ provided by \autoref{theo:CDRiccimain} is optimal, 
at least if $D$ is an even number. This can be seen by looking at the lattice 
$\iZ^d$.
More precisely, we have the following result.

\begin{theorem} Let $G=(V,E)$ be the lattice $\iZ^d$ and consider the case 
without weights and with the Laplace operator given by
\[
\Delta u(x)=\,\frac{1}{\mu_0}\,\sum_{y\sim x} \big(u(y)-u(x)\big),\quad x\in 
\iZ^d,
\]
where $\mu_0>0$ is a constant. Then for any $a>0$, there exists a function 
$v\in 
\iR^V$ satisfying
\[
Lv(0)=a>0 \quad \mbox{and}\quad Lv(0)\ge Lv(y)\;\;\mbox{for all}\;y\sim 0,
\]
and such that
\[
\Delta \Psi_{\Upsilon'}(v)(0)= F\big(Lv(0)\big)=F(a),
\]
where $F$ is the CD-function given by \eqref{RicciF} with $D=2d$. 
\end{theorem}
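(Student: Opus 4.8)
The plan is to run the proof of \autoref{theo:CDRiccimain} backwards on $\iZ^d$: we exhibit a function $v$ for which every inequality used in that proof collapses to an equality. First I would record the equality conditions. On $\iZ^d$ the Ricci-flat structure is given by the translations $\eta_k(x)=x+e_k$ and $\eta_{d+k}(x)=x-e_k$ for $k=1,\dots,d$, so that $k^*=d+k$, $(k^*)^*=k$, and $D=2d$. Because $(j^*)^*=j$ one may take $j'=j^*$ in the rearrangement step, so \eqref{Rea} is automatically an equality. The remaining inequalities in the passage from \eqref{psiform} to $F\big(Lv(0)\big)$ are: the inner application of Jensen (convexity of $\exp$), the use of the local maximum property $Lv(0)\ge Lv(\eta_j(0))$, the arithmetic–geometric step $\tfrac12(e^{z_j-z}+e^{z_{j'}-z})\ge e^{-\tilde w_j}$ passing from \eqref{Ricflatstep} to \eqref{Ricflatstep2}, and the final application of Jensen to the convex function $F$. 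I would now impose conditions on $v$ that turn each of these into an equality.

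The key observation is that the inner Jensen step, the AM–GM step and the final Jensen step all become equalities as soon as $v$ is constant on each $\ell_1$-sphere $\{|x|_1=m\}$, $m=0,1,2$, around $0$; say $v(0)=z$, $v\equiv s$ on $\{|x|_1=1\}$ and $v\equiv p$ on $\{|x|_1=2\}$ (the values of $v$ off $B_2(0)$ are irrelevant, since $\Delta\Psi_{\Upsilon'}(v)(0)$, $Lv(0)$ and the $Lv(y)$ for $y\sim0$ only see $B_2(0)$). Indeed, with $z_j=s$ for all $j$ the quantity $\tilde w_j=z-s$ is independent of $j$, which makes the AM–GM and the final Jensen step equalities; and for fixed $j$ the numbers $z_{ij}-z_i-z_j+z=v(g_i+g_j)-2s+z$ with $i\ne j^*$ all equal $p-2s+z$, because every second neighbour $g_i+g_j$ — whether of axial type $\pm2e_k$ or of diagonal type $\pm e_k\pm e_l$ — lies on $\{|x|_1=2\}$. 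This is exactly the point where a merely separable ansatz would fail, as it would assign different values at $2e_k$ and at $e_k\pm e_l$; hence the inner Jensen step is an equality as well.

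It remains to choose $z,s,p$ so that $Lv(0)=a$ and so that the local maximum property holds with equality, i.e.\ $Lv(\eta_j(0))=a$ for every $j$. A direct computation gives $Lv(0)=\tfrac{2d}{\mu_0}(z-s)$ and, for the neighbour $e_1$, $Lv(e_1)=\tfrac1{\mu_0}\big[(s-z)+(2d-1)(s-p)\big]$, its $2d$ neighbours being $0$ and $2d-1$ points of $\{|x|_1=2\}$. Setting $z=0$, the equations $Lv(0)=a$ and $Lv(e_1)=a$ are solved by $s=-\tfrac{\mu_0 a}{2d}$ and $p=-\tfrac{2\mu_0 a}{2d-1}$; by symmetry $Lv(\eta_j(0))=a$ for all $j$, so $Lv(0)=a>0$ and $Lv(0)=a\ge Lv(y)$ for all $y\sim0$. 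With this $v$ every inequality in the proof of \autoref{theo:CDRiccimain} is an equality, and since all $\tilde w_j$ equal $\tfrac{\mu_0 a}{2d}=\tfrac{\mu_0 a}{D}$, the final line of that proof reads $\Delta\Psi_{\Upsilon'}(v)(0)=\tfrac1D\sum_{j=1}^{D}F\big(\tfrac{D\tilde w_j}{\mu_0}\big)=F(a)$.

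The main obstacle I expect is precisely the inner Jensen equality: it forces the distance-two values of $v$ to be constant over the \emph{whole} $\ell_1$-sphere of radius $2$, mixing the axial and diagonal second neighbours, so one must check that this strong symmetry requirement is still compatible with prescribing a nonzero Laplacian $Lv(0)=a$ together with the local-maximum equalities — which, as above, reduces to a small and solvable linear system in $z,s,p$. This is also where the even value $D=2d$ enters, through the pairing $j\leftrightarrow j^*$ that renders the rearrangement step free.
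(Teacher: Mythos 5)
Your proposal is correct and is essentially the paper's own argument: the witness function you build (constant on the $\ell_1$-spheres of radii $0,1,2$, normalized by $v(0)=0$, with the distance-one and distance-two values $s=-\tfrac{\mu_0 a}{2d}$, $p=-\tfrac{2\mu_0 a}{2d-1}$ chosen so that $Lv$ is constant on the closed unit ball) coincides, up to the additive normalization, with the paper's function taking the values $\alpha,\beta,\gamma$ subject to \eqref{gamma}. The only difference is expository: the paper verifies $\Delta \Psi_{\Upsilon'}(v)(0)=F(a)$ by direct computation from \eqref{psiform} using $(2d-1)(\gamma-2\beta+\alpha)=-2(\alpha-\beta)$, whereas you deduce it by checking that every inequality in the proof of \autoref{theo:CDRiccimain} (inner Jensen step, local-maximum step, rearrangement, symmetrization, final Jensen step for $F$) is saturated for this $v$ --- which amounts to the same calculation.
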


\begin{proof}
Let $e_j$ be the $j$th unit vector in $\iR^d$ and set $\eta_j(x)=x+e_j$ and 
$\eta_{j+d}(x)=x-e_j$ for $j=1,\ldots,d$ and $x\in \iZ^d$. For any vertex, the 
mapping $j\to j^*$ from
\autoref{lem-Ricflat}(ii) is then given by $j^*=j+d$ for $j=1,\ldots,d$
and $j^*=j-d$ for $j=d+1,\ldots,2d$. 

Let $\alpha>\beta$ and define
$v(0)=\alpha$ and $v(\eta_j(0))=\beta$ for $j=1,\ldots,2d$. For $i,j\in 
\{1,\ldots,2d\}$ with $i\neq j^*$ 
we further set $v(\eta_j(\eta_i(0))=\gamma$. We put $v(x)=0$ elsewhere. We then 
have
\[
Lv(0)=\frac{2d(\alpha-\beta)}{\mu_0}>0.
\]
The idea is now to choose $\gamma\in \iR$ such that $Lv(\eta_j(0))=Lv(0)$ for 
all $j=1,\ldots,2d$.
Note that, by symmetry, $Lv$ then assumes the same value at all neighbors of 
$0$. We have
\[
Lv(\eta_j(0))=\frac{1}{\mu_0}\big(2d\beta-\alpha-(2d-1)\gamma\big),\quad 
j=1,\ldots,2d,
\]
and so the condition for $\gamma$ becomes
\begin{align} \label{gamma}
2d(\alpha-\beta)=2d\beta-\alpha-(2d-1)\gamma.
\end{align}
Selecting $\gamma=\gamma(\alpha,\beta,d)$ such that \eqref{gamma} is 
satisfied, we have by \eqref{psiform}, using the
same notation as above,
\begin{align*}
\Delta \Psi_{\Upsilon'}(v)(0) & =\frac{2d}{\mu_0^2} \, e^{\beta-\alpha} \Big( 
(2d-1)[e^{\gamma-2\beta+\alpha}-1]+
e^{2\alpha-2\beta}-1\Big)\\
& = \frac{2d}{\mu_0^2} \, e^{\beta-\alpha} 
\Big(e^{2\alpha-2\beta}-1-(2\alpha-2\beta)+(2d-1)
\big[e^{\gamma-2\beta+\alpha}-1-(\gamma-2\beta+\alpha)\big] \Big)\\
& = \frac{2d}{\mu_0^2} \, e^{\beta-\alpha}\big( \Upsilon(2\alpha-2\beta)+(2d-1) 
\Upsilon(\gamma-2\beta+\alpha)\big)\\
& = \frac{2d}{\mu_0^2} \, \exp\big({-\frac{\mu_0}{2d}Lv(0)}\big)\Big( 
\Upsilon\big( \frac{\mu_0}{d}Lv(0)   \big)+(2d-1) 
\Upsilon\big(-\frac{\mu_0}{d(2d-1)}Lv(0)\big)\Big)\\
& = F\big(Lv(0)\big),
\end{align*}
since
\[
(2d-1)(\gamma-2\beta+\alpha)=-2(\alpha-\beta).
\]
This proves the assertion as for any given $a>0$, we can clearly choose 
$\alpha$ and $\beta$ such that $Lv(0)=a$.
\end{proof}

\begin{example} \label{dgrid}
We consider the scaled $d$-dimensional integer lattice $(\tau \iZ)^d$ with 
scaling parameter $\tau>0$. So 
$V$ is the set of all points $(x_1,\ldots,x_d)$ where every $x_i$ is an 
integral multiple of $\tau$.
Let us assume that  all the weights are equal to $1$ and that $\mu(x)=\tau^2$ 
for all $x\in V$. That is, we have
\[
\Delta u(x)=\frac{1}{\tau^2}\, \sum_{i=1}^d \big(u(x+\tau e_i)-2u(x)+u(x-\tau 
e_i)\big),
\]
where $e_i$ denotes the $i$th unit vector. The graph is $2d$-regular and 
Ricci-flat. By \autoref{theo:CDRiccimain},
the condition CD($F_\tau$;0) holds with
\begin{align} \label{RicciFt}
F_\tau(a)=\frac{2d}{\tau^4} \exp\big(-\frac{\tau^2}{2d}a\big)\Big[ 
\Upsilon\Big(\frac{\tau^2}{d}a \Big)
+(2d-1) \Upsilon\Big(- \frac{\tau^2}{d(2d-1)}a \Big)\Big].
\end{align}
Since $\Upsilon(y)\sim \frac{1}{2}y^2$ as $y\to 0$, we obtain that as $a\to 0+$
\[
F_\tau(a)\sim \frac{2d}{\tau^4} \,\Big( \frac{1}{2} 
\,\frac{\tau^4}{d^2}\,a^2+\frac{2d-1}{2}\,
\frac{\tau^4}{d^2(2d-1)^2}\,a^2\Big) = \frac{2d}{d(2d-1)}\,a^2.
\]
In the same way we see that for fixed $a\ge 0$
\[
F_\tau(a)\to \frac{2d}{d(2d-1)}\,a^2\quad \mbox{as}\; \tau\to 0+.
\]
In particular, we obtain for $d=1$ that $F_\tau(a)$ tends as $\tau\to 0+$ to 
the 
quadratic function $2a^2$, which
appears in the classical (continuous) case in one dimension! 
\end{example}
 
\begin{theorem} \label{theo-CDRiccimainalpha}
Let $G=(V,E)$ be a $D$-regular unweighted Ricci-flat graph with $D\ge 2$. 
Assume 
that $\mu(y)=\mu_0>0$ for all $y\in V$. Then for any $\alpha\in 
(0,1)$, CD$_\alpha$($F_\alpha$;0) holds 
with
\begin{align} \label{RicciFalpha}
F_\alpha(a)=\frac{D}{\mu_0^2} \exp\big(-\frac{\mu_0(1-\alpha)}{D}a\big)\Big[ 
\exp\Big(\frac{2(1-\alpha)\mu_0}{D}a\Big)+\frac{1-\alpha}{\alpha}\exp\Big(-\frac
{2\alpha\mu_0}{D}a  \Big)-\frac{1}{\alpha}\Big].
\end{align}
\end{theorem}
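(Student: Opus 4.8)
The plan is to follow the proof of \autoref{theo:CDRiccimain} step by step, now carrying the additional weights $e^{\alpha(v(y)-v(x))}$ that separate $\mathcal C_\alpha$ from $\mathcal C_0=\Delta\Psi_{\Upsilon'}$ and replacing the local maximum hypothesis on $Lv$ by the one on $\mathcal L_\alpha$. First I would verify that $F_\alpha$ from \eqref{RicciFalpha} is a CD-function, exactly as in the opening of that proof: pulling $1/\alpha$ out of the bracket and rescaling the argument via $\tilde a=\tfrac{2\mu_0}{D}a$ gives
\[
F_\alpha(a)=\frac{D}{\alpha\mu_0^2}\,e^{-\frac{1-\alpha}{2}\tilde a}\bigl(\alpha e^{(1-\alpha)\tilde a}+(1-\alpha)e^{-\alpha\tilde a}-1\bigr),
\]
which is a positive multiple of the function in \autoref{CDFexample} with $\lambda=\alpha\in(0,1)$ (whereas the case $\alpha=0$ instead used $\lambda=1/D$). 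Hence $F_\alpha$ is a strictly convex CD-function by \autoref{CDFexample} and \autoref{prop-perm1}.

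For the inequality itself, fix $x$ and write $z=v(x)$, $z_i=v(\eta_i(x))$, $z_{ij}=v(\eta_j(\eta_i(x)))$ as in \eqref{psiform}; recall $\Upsilon'(t)=e^t-1$. Expanding \eqref{Calpha} yields $\mathcal C_\alpha(v)(x)=\mu_0^{-2}\sum_{i,j}e^{\alpha(z_i-z)}\bigl(e^{z_{ij}-z_i}-e^{z_j-z}\bigr)$. By \eqref{Lalpha} the admissibility conditions of \autoref{def:CD} become $\mathcal L_\alpha(v)(x)>0$, which forces $Lv(x)>0$, and $\mathcal L_\alpha(v)(x)\ge\mathcal L_\alpha(v)(\eta_i(x))$, which is equivalent to the exponential-sum inequality
\[
\sum_{j}e^{\alpha(z_{ij}-z_i)}\ \ge\ \sum_{k}e^{\alpha(z_k-z)}\qquad(\,i=1,\dots,D\,).
\]
The one decisive use of this hypothesis is to control the subtracted exponential mass: since the right-hand side equals $\sum_i e^{\alpha(z_i-z)}$ and is therefore bounded by $\sum_i e^{\alpha(z_{ji}-z_j)}$ for each fixed $j$, multiplying by $e^{z_j-z}>0$, summing in $j$ and relabeling indices I obtain
\[
\mathcal C_\alpha(v)(x)\ \ge\ \frac1{\mu_0^2}\sum_{i,j}\Bigl(e^{(z_{ij}-z_i)-\alpha(z-z_i)}-e^{\alpha(z_{ij}-z_i)-(z-z_i)}\Bigr).
\]

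From this point only the Ricci-flat structure and convexity enter. I would split off the \emph{return} contributions, i.e.\ the unique pairs with $\eta_j(\eta_i(x))=x$ (equivalently $z_{ij}=z$) guaranteed by \autoref{lem-Ricflat}(ii); each such term collapses to the diagonal $2\sinh\bigl((1-\alpha)(z-z_i)\bigr)$. The remaining off-diagonal terms are treated by the convexity of the exponential together with the summation identity \autoref{lem-Ricflat}(i), using the elementary bound $t^\alpha\le\alpha t+(1-\alpha)$ ($t\ge0$) to realign the $\alpha$-contracted exponentials $e^{\alpha(z_{ij}-z_i)}$ with the unscaled ones, so that the auxiliary $Lv(\eta_i(x))$-terms produced along the way cancel rather than having to be discarded. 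Symmetrizing over $j\mapsto j'=D+1-j$ by the rearrangement inequality \eqref{Rea}, exactly as in the passage leading to \eqref{Ricflatstep2}, each resulting summand should take the shape $\tfrac1\alpha$ times the convex profile of $F_\alpha$ at a point $\theta_j$ with $\sum_j\theta_j=\mu_0 Lv(x)$; Jensen's inequality for the convex $F_\alpha$ then delivers $\mathcal C_\alpha(v)(x)\ge F_\alpha(Lv(x))$.

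The main obstacle is the clash of exponential scales. The hypothesis constrains $\mathcal L_\alpha$, built from the contracted differences $e^{\alpha(z_i-z)}$, whereas $\mathcal C_\alpha$ couples these to the full differences $e^{z_{ij}-z_i}$; in the case $\alpha=0$ the maximum condition sat directly on $Lv$ and could be invoked pointwise at each neighbor, but here it is available only through the single exponential-sum step above. Making that step produce precisely the displayed reduction, and then checking that the symmetrized summands assemble into the specific convex function $F_\alpha$ of \eqref{RicciFalpha} rather than into some smaller comparison function, is the delicate bookkeeping that the case $\alpha\in(0,1)$ demands beyond \autoref{theo:CDRiccimain}.
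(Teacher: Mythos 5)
Your opening (rescaling $F_\alpha$ and invoking \autoref{CDFexample} with $\lambda=\alpha$ plus \autoref{prop-perm1}) and your expansion of $\mathcal C_\alpha(v)(x)$ coincide with the paper's proof, and your endgame (return pairs via \autoref{lem-Ricflat}(ii), rearrangement \eqref{Rea}, Jensen for the convex $F_\alpha$) is also the right template. The genuine gap is the step you call ``the one decisive use'' of the hypothesis. Your replacement of the subtracted mass $\sum_j e^{z_j-z}\sum_i e^{\alpha(z_i-z)}$ by $\sum_j e^{z_j-z}\sum_i e^{\alpha(z_{ji}-z_j)}$ is a valid inequality, but it spends the maximality hypothesis at the wrong scale (raw prefactors $e^{z_j-z}$ against $\alpha$-contracted second differences) and gives away too much: the resulting expression $\frac1{\mu_0^2}\sum_{i,j}\bigl(e^{(z_{ij}-z_i)+\alpha(z_i-z)}-e^{\alpha(z_{ij}-z_i)+(z_i-z)}\bigr)$ can be strictly smaller than $F_\alpha(Lv(x))$ on configurations satisfying everything your remaining steps are allowed to use. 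Concretely, take $G=\iZ$, $\mu_0=1$ (so $D=2$), fix $\alpha\in(0,1)$, and for small $\delta>0$ set
\begin{align*}
v(x)=0,\qquad v(x\pm1)=-\tfrac{\delta}{2},\qquad v(x\pm2)=-\delta+\tfrac{\log\alpha}{1-\alpha}.
\end{align*}
Then $Lv(x)=\delta>0$ and $\mathcal L_\alpha(v)(x)=\tfrac2\alpha(1-e^{-\alpha\delta/2})>0$, and the Ricci-flat identities hold trivially, but your reduced expression equals
\begin{align*}
4\sinh\bigl(\tfrac{(1-\alpha)\delta}{2}\bigr)-2(1-\alpha)\,\alpha^{\alpha/(1-\alpha)}e^{-(1+\alpha)\delta/2}\ \longrightarrow\ -2(1-\alpha)\,\alpha^{\alpha/(1-\alpha)}<0
\end{align*}
as $\delta\to0+$, whereas $F_\alpha(\delta)>0$. (This $v$ necessarily violates the neighbor-maximality of $\mathcal L_\alpha$ — it must, since the theorem is true — but it satisfies Ricci-flatness, $\mathcal L_\alpha(v)(x)>0$ and $Lv(x)>0$, which is all you retain after your reduction.) So no amount of convexity, \autoref{lem-Ricflat}(i) and rearrangement can close the argument from your intermediate bound; the ``delicate bookkeeping'' you defer is not bookkeeping but an impossibility.

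What the paper does instead is reverse the order of operations so that the hypothesis is used \emph{after} everything has been brought to the $\alpha$-scale. Writing $\mathcal C_\alpha(v)(x)=\mu_0^{-2}\sum_j e^{z_j-z}\sum_i\bigl(e^{z_{ij}-z_j-(1-\alpha)(z_i-z)}-e^{\alpha(z_i-z)}\bigr)$, one keeps the term $i=j^*$ exact and applies Young's inequality $a b^{\alpha-1}\ge\frac1\alpha a^\alpha-\frac{1-\alpha}\alpha b^\alpha$ (your bound $t^\alpha\le\alpha t+1-\alpha$ in disguise) to every $i\ne j^*$; this converts those terms into $\frac1\alpha\Upsilon'(\alpha(z_{ij}-z_j))-\frac1\alpha\Upsilon'(\alpha(z_i-z))$, and then \autoref{lem-Ricflat}(i) makes the hypothesis appear exactly as the nonnegative, discardable quantity $\mu_0\bigl[\mathcal L_\alpha(v)(x)-\mathcal L_\alpha(v)(\eta_j(x))\bigr]$. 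What survives for each $j$ is $e^{(1-\alpha)(z_j-z)}\bigl(e^{2(1-\alpha)w_j}+\frac{1-\alpha}\alpha e^{-2\alpha w_j}-\frac1\alpha\bigr)$ with $w_j=z-\frac12 z_j-\frac12 z_{j^*}$, and only at this point do the rearrangement, the symmetrization $j\mapsto j'$, and Jensen's inequality that you anticipated actually apply and assemble into $F_\alpha(Lv(x))$.
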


\begin{proof} We first show that $F_\alpha$ is a strictly convex CD-function. 
Setting 
$\eta=\frac{\mu_0}{D}$ we can write
\begin{align*}
F_\alpha(a) & =\frac{1}{\alpha\eta\mu_0} e^{-\eta(1-\alpha)a}\big[ 
\alpha e^{2(1-\alpha)\eta a}+(1-\alpha)e^{-2\alpha\eta a}-1\big].
\end{align*}
Scaling the argument by putting $\tilde{a}=2\eta a$ and
introducing the function $\tilde{F}$ via $\tilde{F}(\tilde{a})=F_\alpha(a)$ we 
obtain
\[
\tilde{F}(\tilde{a})=\frac{1}{\alpha\eta\mu_0} e^{-\frac{1-\alpha}{2}a}\big[ 
\alpha e^{(1-\alpha)\tilde{a}}+(1-\alpha)e^{-\alpha\tilde{a}}-1\big].
\]
It now follows from \autoref{CDFexample} and \autoref{prop-perm1} that 
$\tilde{F}$, and thus also $F$, are
strictly convex CD-functions.

In what follows, we use the same notation as in the proof of 
\autoref{theo:CDRiccimain}.
Let $x\in V$ and suppose that $v\in \iR^V$ is such that 
\begin{align} \label{maxcondalpha}
{\mathcal L}_\alpha(v)(x)>0 \quad \mbox{and}\quad {\mathcal L}_\alpha(v)(x)\ge 
{\mathcal L}_\alpha(v)(\eta_j(x))\;\;\mbox{for 
all}\;j=1,\ldots, D.
\end{align}
Recall that we defined
\[
{\mathcal L}_\alpha(v)(x)  =-\frac{1}{\alpha}\,\Psi_{\Upsilon'}(\alpha v)(x).
\]
We have
\begin{align}
{\mathcal C}_\alpha(v)(x) & = \frac{1}{\mu_0}\,\sum_{i=1}^{D} e^{\alpha(z_i-z)}
\big[\Psi_{\Upsilon'}(v)(\eta_i(x))
-\Psi_{\Upsilon'}(v)(x)\big] \nonumber\\
& = \frac{1}{\mu_0^2}\,\sum_{i=1}^D e^{\alpha(z_i-z)}\sum_{j=1}^D \big[ 
\Upsilon'(z_{ij}-z_i)-\Upsilon'(z_j-z)\big] \nonumber\\
&  = \frac{1}{\mu_0^2}\,\sum_{i=1}^D e^{\alpha(z_i-z)}\sum_{j=1}^D \big[ 
e^{z_{ij}-z_i}-e^{z_j-z}\big] \nonumber\\
 & = \frac{1}{\mu_0^2}\,\sum_{j=1}^D e^{z_j-z} \sum_{i=1}^D 
\big(e^{z_{ij}-z_j-(1-\alpha)(z_i-z)}-e^{\alpha(z_i-z)}\big). 
\label{Calphaformula}
\end{align}

Let $i,j\in \{1,\ldots,D\}$. Then, by Young's inequality, we have for 
$a=e^{z_{ij}-z_j}>0$
and $b=e^{z_i-z}>0$
\[
a^\alpha=\left(\frac{a}{b^{1-\alpha}}\right)^\alpha b^{\alpha(1-\alpha)}\le 
\alpha\,\frac{a}{b^{1-\alpha}}
+(1-\alpha)b^\alpha,
\]
and thus
\[ 
\frac{a}{b^{1-\alpha}}\ge 
\frac{1}{\alpha}\,a^\alpha-\frac{1-\alpha}{\alpha}\,b^\alpha,
\]
which gives
\begin{align} 
e^{z_{ij}-z_j-(1-\alpha)(z_i-z)}-e^{\alpha(z_i-z)} & \ge 
\frac{1}{\alpha}\,e^{\alpha(z_{ij}-z_j)}
-\frac{1-\alpha}{\alpha}\,e^{\alpha(z_i-z)}-e^{\alpha(z_i-z)}\nonumber \\
 & =  
\frac{1}{\alpha}\,\Upsilon'\big(\alpha(z_{ij}-z_j)\big)-\frac{1}{\alpha}\,
\Upsilon'\big(\alpha(z_{i}-z)\big).
\label{youngalpha}
\end{align}
Using $z_{j^*j}=z$, inequality \eqref{youngalpha} for all $i\neq j^*$, 
\autoref{lem-Ricflat} (i) as well as the local maximum property 
\eqref{maxcondalpha}, we may now estimate as follows.
\begin{align*}
\sum_{i=1}^D 
& \,\big(e^{z_{ij}-z_j-(1-\alpha)(z_i-z)}-e^{\alpha(z_i-z)}\big)\ge 
e^{z-z_j-(1-\alpha)(z_{j^*}-z)}-e^{\alpha(z_{j^*}-z)}\\
& \;\;+\frac{1}{\alpha}\,
\sum_{i=1, i\neq j^*}^D 
\Big(\Upsilon'\big(\alpha(z_{ij}-z_j)\big)-\Upsilon'\big(\alpha(z_{i}
-z)\big)\Big)
\\
& \ge e^{z-z_j-(1-\alpha)(z_{j^*}-z)}-e^{\alpha(z_{j^*}-z)}+
\mu_0\big[{\mathcal L}_\alpha(v)(x)- 
{\mathcal L}_\alpha(v)(\eta_j(x))\big]\\
& 
\;\;-\frac{1}{\alpha}\,e^{\alpha(z-z_j)}+\frac{1}{\alpha}\,e^{\alpha(z_{j^*}-z)}
\\
& \ge 
e^{z-z_j-(1-\alpha)(z_{j^*}-z)}-\frac{1}{\alpha}\,e^{\alpha(z-z_j)}+\frac{
1-\alpha}{\alpha}\,
e^{\alpha(z_{j^*}-z)}.
\end{align*}  
Combining the last inequality and \eqref{Calphaformula} yields
\begin{align*}
{\mathcal C}_\alpha(v)(x) & \ge 
\frac{1}{\mu_0^2}\,\sum_{j=1}^D e^{z_j-z} \Big( 
e^{z-z_j-(1-\alpha)(z_{j^*}-z)}-\frac{1}{\alpha}\,e^{\alpha(z-z_j)}+\frac{
1-\alpha}{\alpha}\,
e^{\alpha(z_{j^*}-z)}\Big)\\
& = \frac{1}{\mu_0^2}\,\sum_{j=1}^D e^{(1-\alpha)(z_j-z)}
\Big( 
e^{(1-\alpha)(2z-z_j-z_{j^*})}+\frac{1-\alpha}{\alpha}\,e^{\alpha(z_j-2z+z_{j^*}
)}-\frac{1}{\alpha}\Big) \\
& = \frac{1}{\mu_0^2}\,\sum_{j=1}^D e^{(1-\alpha)(z_j-z)}
\Big( e^{2(1-\alpha)w_j}+\frac{1-\alpha}{\alpha}\,e^{-2\alpha 
w_j}-\frac{1}{\alpha}\Big).
\end{align*}
Assuming without restriction of generality that $z_1\le z_2\le \ldots \le 
z_D$, we can argue as in the proof of 
\autoref{theo:CDRiccimain} invoking the rearrangement inequality \eqref{Rea}. 
Thereby we obtain that 
\begin{align} \label{Calphaest2}
{\mathcal C}_\alpha(v)(x) \ge \frac{1}{\mu_0^2}\,\sum_{j=1}^D 
e^{(1-\alpha)(z_j-z)}
\Big( e^{2(1-\alpha)\tilde{w}_j}+\frac{1-\alpha}{\alpha}\,e^{-2\alpha 
\tilde{w}_j}-\frac{1}{\alpha}\Big).
\end{align}
Note that the term inside the brackets in \eqref{Calphaest2} is nonnegative. So 
we can symmetrize the
exponential factor in front of it and then use the convexity of the exponential 
function and $F_\alpha$
to get that
\begin{align*}
{\mathcal C}_\alpha(v)(x)  & \ge \frac{1}{2\mu_0^2}\,\sum_{j=1}^D 
\big[e^{(1-\alpha)(z_j-z)}+e^{(1-\alpha)(z_{j'}-z)}\big]
\Big( e^{2(1-\alpha)\tilde{w}_j}+\frac{1-\alpha}{\alpha}\,e^{-2\alpha 
\tilde{w}_j}-\frac{1}{\alpha}\Big)\\
& \ge \frac{1}{\mu_0^2}\,\sum_{j=1}^D 
e^{-(1-\alpha)\tilde{w}_j}
\Big( e^{2(1-\alpha)\tilde{w}_j}+\frac{1-\alpha}{\alpha}\,e^{-2\alpha 
\tilde{w}_j}-\frac{1}{\alpha}\Big)\\
& = \frac{1}{D}\,\sum_{j=1}^D F_\alpha\Big(\frac{D\tilde{w}_j}{\mu_0}\Big)
\ge F_\alpha\Big(\sum_{j=1}^D 
\frac{\tilde{w}_j}{\mu_0}\Big)=F_\alpha\big(Lv(x)\big).
\end{align*}
\end{proof}

%*************************************************
\subsection{Examples of graphs that do not satisfy condition 
\texorpdfstring{CD($F$;0)}{CD(F;0)}}
%*************************************************

In this section, we provide examples of graphs for which the condition 
CD($F$;0) does not hold. For the graphs under consideration, we construct a 
family of
functions $v$ such that ${\mathcal C}_0(v)=\Delta \Psi_{\Upsilon'}(v)$ becomes 
arbitrarily negative at a point $x_*$. Thus, there cannot be a CD-function 
$F$ with ${\mathcal C}_0(v)(x_*)\ge F\big(Lv(x_*)\big)$.

\begin{example} \label{exa:star}{\ }

\begin{minipage}{0.6\textwidth}

\medskip

We consider the unweighted graph $G=(V,E)$ with $V=\{x_*,x_1,x_2,x_3\}$, 
$E=\{x_*x_j:\,j=1,2,3\}$ with $\mu\equiv 1$ on $V$. Let $v\in \iR^V$ and set 
$z_j=v(x_j)$ for $j\in\{*,1,2,3\}$. 

At the vertex $x_*$,
we have

\end{minipage}
\begin{minipage}{0.4\textwidth}
\begin{figure}[H]
\begin{tikzpicture}[main_node/.style={circle,fill=black,draw,minimum 
size=0.2em,inner sep=1.5pt]}]

\node[main_node] [label={[distance=0.1cm]270:$x_3$}] (1) at (0,0) {};
\node[main_node] [label={[distance=0.2cm]90:$x_{\ast}$}](2) at (0, 1) {};
\node[main_node] [label={[distance=0.1cm]180:$x_1$}](3) at (-0.866, 1.5) {};
\node[main_node] [label={[distance=0.1cm]00:$x_2$}](4) at (0.866, 1.5) {};
 
\draw (1) -- (2) (3) -- (2) (4) -- (2);
\end{tikzpicture}
\caption{A star-like graph}
\label{fig:star}
\end{figure}
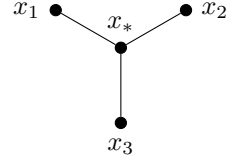
\end{minipage}

\medskip

\begin{align*}
\Delta \Psi_{\Upsilon'}(v)(x_*) = & 
\,\Psi_{\Upsilon'}(v)(x_1)+\Psi_{\Upsilon'}(v)(x_2)+\Psi_{\Upsilon'}(v)(x_{3})
-3\Psi_{\Upsilon'}(v)(x_*)\\
 = &\,\Upsilon'(z_*-z_1)+\Upsilon'(z_*-z_2)+\Upsilon'(z_*-z_3)\\
 &\;- 3\big[ \Upsilon'(z_1-z_*)+\Upsilon'(z_2-z_*)+\Upsilon'(z_3-z_*)\big]\\
 = & 
\,e^{z_*-z_1}+e^{z_*-z_2}+e^{z_*-z_3}-3-3\big[e^{z_1-z_*}+e^{z_2-z_*}+e^{z_3-z_*
}-3 \big]\\
 = & \,e^{a_1}+e^{a_2}+e^{a_3}-3-3\big[e^{-a_1}+e^{-a_2}+e^{-a_3}-3 \big],
\end{align*}
where we set $a_j=z_*-z_j$. We choose $v$ such that $z_*=0$, $-z_1=a_1=-t$ and 
$-z_j=a_j=t$ for $j=2,3$, where $t>0$
is a parameter. Then
\begin{align*}
Lv(x_*) & =a_1+a_2+a_3=t>0,\\
Lv(x_1) & = -a_1=t,\\
Lv(x_j) & = -a_j=-t,\;j=2,3.
\end{align*}
So we see that $Lv$ has a positive maximum at $x_*$. On the other hand, 
inserting the values of $a_j$, gives
\begin{align*}
\Delta \Psi_{\Upsilon'}(v)(x_*) & = e^{-t}+2e^{t}+6-3e^t-6e^{-t}=6-e^t-5e^{-t},
\end{align*}
which shows that 
\begin{align*}
\Delta \Psi_{\Upsilon'}(v)(x_*)\to -\infty \quad \mbox{as}\;t\to \infty.
\end{align*}    
\end{example}

\begin{example} \label{exa:hexa-cut}{\ }

\begin{minipage}{0.6\textwidth}

\medskip

We consider the graph from the previous example and add two edges at each 
of three
ends so that the resulting graph becomes a tree. More precisely, we have 
\begin{align*} 
V & =\{x_*,x_1,x_2,x_3,x_{11},x_{12},x_{21},x_{22},x_{31}, x_{32}\},\\
E & =\{x_*x_j:\,j=1,2,3\}\cup \{x_jx_{jk}:\,j=1,2,3,\,k=1,2\}.
\end{align*}
We consider the case without weights
and with $\mu\equiv 1$ on $V$. Let $v\in \iR^V$ and set $z_j=v(x_j)$ for 
$j\in\{*,1,2,3\}$ and
$z_{jk}=v(x_{jk})$ for $j\in\{1,2,3\}$ and $k\in\{1,2\}$.  As before, we put 
$a_j=z_*-z_j$ for $j=1,2,3$ and choose $v$ such that $z_*=0$, 
$-z_1=a_1=-t$, $-z_j=a_j=t$ for $j=2,3$, with $t>0$. As to the new vertices, we 
put $z_{jk}:=z_j$ for all $j=1,2,3$ and $k=1,2$. 
\end{minipage}
\begin{minipage}{0.4\textwidth}
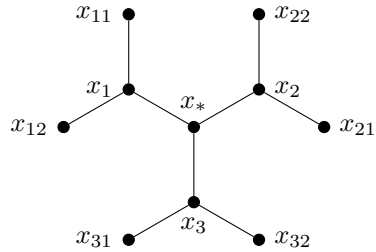
\begin{figure}[H]
\begin{tikzpicture}[main_node/.style={circle,fill=black,draw,minimum 
size=0.2em,inner sep=1.5pt]}]

    \node[main_node] [label={[distance=0.1cm]270:$x_3$}] (1) at (0,0) {};
    \node[main_node] [label={[distance=0.1cm]180:$x_{31}$}](2) at (-0.866, 
-0.5) {};
    \node[main_node] [label={[distance=0.1cm]00:$x_{32}$}](3) at (0.866, -0.5) 
{};
    \node[main_node] [label={[distance=0.2cm]90:$x_{\ast}$}](4) at (0, 1) {};
    \node[main_node] [label={[distance=0.1cm]180:$x_1$}](5) at (-0.866, 1.5) {};
    \node[main_node] [label={[distance=0.1cm]00:$x_2$}](6) at (0.866, 1.5) {};
    \node[main_node] [label={[distance=0.1cm]180:$x_{12}$}](7) at (-1.732, 1) 
{};
    \node[main_node] [label={[distance=0.1cm]180:$x_{11}$}](8) at (-0.866, 2.5) 
{};
    \node[main_node] [label={[distance=0.1cm]00:$x_{21}$}](9) at (1.732, 1) {};
    \node[main_node] [label={[distance=0.1cm]00:$x_{22}$}](10) at (0.866, 2.5) 
{};
    
    \draw (1) -- (2) (1) -- (3) (1) -- (4) -- (5) -- (7) (4) -- (6) (5) -- (8) 
(6) -- (10) (6) -- (9);
\end{tikzpicture}
\caption{Part of hexagonal tiling}
\label{fig:hexa-cut}
\end{figure}
\end{minipage}

\medskip

At the vertex $x_*$, we now obtain the same expression as above, since 
$\Upsilon'(0)=0$. Indeed,
\begin{align*}
\Delta \Psi_{\Upsilon'}(v)(x_*) = & 
\,\Psi_{\Upsilon'}(v)(x_1)+\Psi_{\Upsilon'}(v)(x_2)+\Psi_{\Upsilon'}(v)(x_{3})
-3\Psi_{\Upsilon'}(v)(x_*)\\
 = &\;\Upsilon'(z_*-z_1)+\Upsilon'(z_{11}-z_1)+\Upsilon'(z_{12}-z_1)\\
  &\,+\Upsilon'(z_*-z_2)+\Upsilon'(z_{21}-z_2)+\Upsilon'(z_{22}-z_2)\\
   &\,+\Upsilon'(z_*-z_3)+\Upsilon'(z_{31}-z_3)+\Upsilon'(z_{32}-z_3)\\
 &\;- 3\big[ \Upsilon'(z_1-z_*)+\Upsilon'(z_2-z_*)+\Upsilon'(z_3-z_*)\big]\\
 = &\,\Upsilon'(z_*-z_1)+\Upsilon'(z_*-z_2)+\Upsilon'(z_*-z_3)\\
 &\;- 3\big[ \Upsilon'(z_1-z_*)+\Upsilon'(z_2-z_*)+\Upsilon'(z_3-z_*)\big]\\
= & \;6-e^t-5e^{-t}.
\end{align*}
The values of $Lv$ on the set $\{x_*,x_1,x_2,x_3\}$ remain unchanged, since
\[
Lv(x_j)=3z_j-z_*-z_{j1}-z_{j2}=z_j-z_*=-a_j,\quad j=1,2,3.
\]
Thus $Lv$ has a positive local maximum at $x_*$ and as before $\Delta 
\Psi_{\Upsilon'}(v)(x_*)\to -\infty$
as the parameter $t\to \infty$.
\end{example}

\begin{example} \label{exa:hexagonal_tiling}{\ }

\begin{minipage}{0.4\textwidth}
Let us consider the graph, which is given by a hexagonal tiling of the plane. 
The graph shown in \autoref{fig:hexa-cut} 
obviously is a subgraph of this 
tiling. It follows from the previous example that there is no CD-function $F$ 
for which CD($F$;0) is satisfied.
\end{minipage}
\begin{minipage}{0.6\textwidth}
\begin{figure}[H]
\centering
\begin{tikzpicture}[x=4.34mm,y=7.5mm, rotate=90]
  % some styles
  \tikzset{
    box/.style={
      regular polygon,
      regular polygon sides=6,
      minimum size=10mm,
      inner sep=0mm,
      outer sep=0mm,
      rotate=0,
    draw
    }
  }

\foreach \i in {0,...,3} 
    \foreach \j in {0,...,3} {
            \node[box] at (2*\i,2*\j) {};
            \node[box] at (2*\i+1,2*\j+1) {};
            
     }

\end{tikzpicture}
\caption{Hexagonal tiling}
\label{fig:hexa-tiling}
\end{figure}
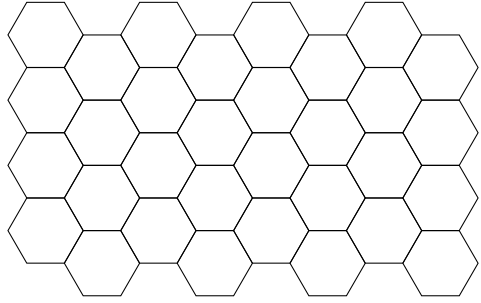
\end{minipage}

\end{example}

%************************************************************************
\section{Li-Yau inequalities on finite graphs}\label{sec:li-yau-finite}

%************************************************************************
\subsection{A new Li-Yau inequality} \label{sec-li-yau-null}
%************************************************************************

In this section we provide a proof of \autoref{theo:finite-graphs}. Let 
$G=(V,E)$ be a finite connected graph. Suppose that $u:[0,\infty)\times V 
\rightarrow (0,\infty)$ is a solution of the heat equation
on $G$, that is
\begin{align} \label{heat1}
\partial_t u-\Delta u=0\quad \mbox{in}\;[0,\infty)\times V.
\end{align}
Multiplying \eqref{heat1} by $u^{-1}$ and using identity \eqref{palmerel} we 
obtain for
$v:=\log u$ the equation
\begin{align} \label{heat2}
\partial_t v-\Delta v= \Psi_\Upsilon(v)\quad \mbox{in}\;(0,\infty)\times V.
\end{align}
Since 
\[
\Upsilon(z)+z=e^z-1=\Upsilon'(z),
\]
we can rewrite equation \eqref{heat2} as 
\begin{align} \label{heat3}
\partial_t v= \Psi_{\Upsilon'}(v)\quad \mbox{in}\;(0,\infty)\times V.
\end{align}

For the readers' convenience, let us repeat \autoref{theo:finite-graphs}.

\begin{theorem*} 
Assume that $G$ satisfies CD($F$;0) and let 
$\varphi$ be the relaxation function 
associated with the CD-function $F$. 
Suppose that $u:[0,\infty)\times V\rightarrow (0,\infty)$ is a solution of the 
heat equation
on $G$ (equation \eqref{heat1}). Then
\begin{align} \label{eq:LYfinite-copy}
-\Delta \log u\le \varphi(t)\quad \mbox{in}\;(0,\infty)\times V,
\end{align}
and thus
\begin{align} \label{eq:diffHarnack-copy}
\partial_t (\log u)\ge \Psi_\Upsilon(\log u)-\varphi(t)\quad 
\mbox{in}\;(0,\infty)\times V.
\end{align}
\end{theorem*}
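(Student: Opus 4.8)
The plan is to reduce everything to the pointwise bound \eqref{eq:LYfinite-copy}, namely $Lv(t,x)\le\varphi(t)$ with $L=-\Delta$ and $v=\log u$, and to prove the latter by a parabolic maximum principle combined with an ODE comparison against the relaxation function. The differential Harnack estimate \eqref{eq:diffHarnack-copy} is then immediate: rewriting \eqref{heat2} as $\partial_t v=\Delta v+\Psi_\Upsilon(v)=-Lv+\Psi_\Upsilon(v)$ and inserting $Lv\le\varphi(t)$ gives $\partial_t v\ge\Psi_\Upsilon(v)-\varphi(t)$, which is exactly \eqref{eq:diffHarnack-copy}. So the whole task is to establish \eqref{eq:LYfinite-copy}.

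First I would introduce $g(t):=\max_{x\in V}Lv(t,x)$, which is well defined since $V$ is finite. As $G$ is finite and $u$ solves the linear heat equation with $u>0$, each map $t\mapsto Lv(t,x)$ is $C^1$ on $(0,\infty)$ and continuous up to $t=0$; hence $g$, being a finite maximum of such functions, is locally Lipschitz, differentiable at almost every $t$, and continuous with $g(0)$ finite. To differentiate $g$ along the flow I use \eqref{heat3} and the time-independence and linearity of $L$ to compute $\partial_t(Lv)=L(\partial_t v)=-\Delta\Psi_{\Upsilon'}(v)=-\mathcal{C}_0(v)$, the last identity being the observation recorded after \eqref{Calpha}. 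By the standard envelope argument, at almost every $t$ there is a maximizing vertex $x_t$ with $\dot g(t)=\partial_t(Lv)(t,x_t)=-\mathcal{C}_0(v)(t,x_t)$.

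The decisive step is to invoke CD($F$;0). At a maximizer $x_t$ one has $Lv(t,x_t)=g(t)\ge Lv(t,y)$ for every neighbour $y\sim x_t$; since $\mathcal{L}_0(v)=Lv$ in the case $\alpha=0$, both hypotheses of \autoref{def:CD} are met at $x_t$ whenever $g(t)>0$, so \eqref{CDUngleichung} yields $\mathcal{C}_0(v)(t,x_t)\ge F(g(t))$. Together with the formula for $\dot g$ this gives the differential inequality $\dot g(t)\le -F(g(t))$ at almost every $t$ with $g(t)>0$. I expect this interplay --- almost-everywhere differentiation of the finite maximum combined with applying the CD-inequality precisely at the active vertex --- to be the main point that requires care.

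It remains to compare $g$ with $\varphi$, which satisfies $\dot\varphi=-F(\varphi)$ by \autoref{lem:Fprop}. Writing $G(x):=\int_x^\infty dr/F(r)$ as in the proof of \autoref{lem:Fprop}, so that $\varphi=G^{-1}$, $G(\varphi(t))=t$ and $G'=-1/F<0$, I argue by contradiction. If $g(t_0)>\varphi(t_0)$ for some $t_0>0$, let $(t_1,t_0]$ be the connected component of $\{t:g(t)>\varphi(t)\}$ containing $t_0$. Since $\varphi(0+)=\infty$ while $g$ stays bounded near $0$, this component cannot reach $0$, so $t_1>0$ and $g(t_1)=\varphi(t_1)$ by continuity. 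On $[t_1,t_0]$ one has $g\ge\varphi\ge\varphi(t_0)>0$, so the differential inequality applies and $\tfrac{d}{dt}G(g(t))=-\dot g(t)/F(g(t))\ge 1=\tfrac{d}{dt}G(\varphi(t))$ almost everywhere, while $t\mapsto G(g(t))-G(\varphi(t))$ is absolutely continuous. Hence this difference is nondecreasing on $[t_1,t_0]$ and vanishes at $t_1$, forcing $G(g(t_0))\ge G(\varphi(t_0))$; as $G$ is strictly decreasing, $g(t_0)\le\varphi(t_0)$, a contradiction. Therefore $g(t)\le\varphi(t)$ for all $t>0$, which is \eqref{eq:LYfinite-copy}, and \eqref{eq:diffHarnack-copy} follows as explained.
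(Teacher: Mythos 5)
Your proof is correct, and it takes a genuinely different technical route to \eqref{eq:LYfinite-copy} than the paper, even though both arguments share the same two pillars: the evolution identity $\partial_t(Lv)=-\Delta\Psi_{\Upsilon'}(v)=-\mathcal{C}_0(v)$ from \eqref{heat3}, and the application of CD($F$;0) at a vertex where $Lv(t,\cdot)$ attains a positive maximum. The paper handles time by normalization: it considers the quotient $Lv(t,x)/\varphi(t)$, extended by $0$ at $t=0$ (continuous there precisely because $\varphi(0+)=\infty$), takes a global space-time maximum over $[0,t_1]\times V$, and at that single point combines $\partial_t\big(Lv/\varphi\big)\ge 0$ with the CD-inequality and $\dot\varphi=-F(\varphi)$ to obtain $F(a)/a\le F(\varphi(t_*))/\varphi(t_*)$, where $a=Lv(t_*,x_*)>0$; the strict monotonicity of $F(x)/x$, built into \autoref{def:F}, then yields $a\le\varphi(t_*)$, i.e.\ the quotient is at most $1$. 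This is pointwise algebra: no a.e.\ differentiation, no ODE comparison lemma. You instead differentiate the spatial maximum $g(t)=\max_{x\in V}Lv(t,x)$ almost everywhere via the envelope argument (which is valid: at every differentiability point of $g$, one has $\dot g(t)=\partial_t(Lv)(t,x_t)$ for every active vertex $x_t$), deduce $\dot g\le -F(g)$ a.e.\ on $\{g>0\}$, and conclude by ODE comparison through the strictly decreasing function $x\mapsto\int_x^\infty dr/F(r)$ of \autoref{lem:Fprop}; your treatment of the comparison is complete (the component of $\{g>\varphi\}$ cannot reach $t=0$ because $\varphi(0+)=\infty$ while $g(0)$ is finite, and the composition of this integral function with $g$ is Lipschitz on $[t_1,t_0]$ since $g$ stays there in a compact subset of $(0,\infty)$). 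The trade-off: the paper's quotient trick dispenses with all nonsmooth analysis but leans essentially on the strict monotonicity of $F(x)/x$ in its final step, whereas your comparison uses only positivity of $F$ and $\varphi(0+)=\infty$ (monotonicity of $F(x)/x$ enters only through \autoref{lem:Fprop} to produce $\varphi$), at the price of Rademacher, absolute continuity, and a comparison principle; in both proofs the blow-up $\varphi(0+)=\infty$ plays the same structural role of keeping the ``bad set'' away from $t=0$.
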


\begin{proof} We define on $[0,\infty)\times V$ the function $G$ by setting
\[
G(t,x)=-\,\frac{1}{\varphi(t)}\,\Delta v(t,x)
=\,\frac{1}{\varphi(t)}\,L v(t,x),\quad t>0,\,x\in V,
\]
and $G(0,x)=0$, $x\in V$. Observe that $G$ is continuous in time, since 
$\varphi(t)\rightarrow \infty$ as $t \to 0+$.

Let $t_1>0$ be arbitrarily fixed. Suppose that $G$ (restricted to the set 
$[0,t_1]\times V$) assumes the global maximum at $(t_*,x_*)\in[0,t_1]\times V$ 
and that $G(t_*,x_*)>0$. By definition of $G$ it is clear that $t_*>0$, and thus
$(\partial_t G)(t_*,x_*)\ge 0$. 

Now, equation \eqref{heat3} implies that
\[
\partial_t \Delta v=\Delta \Psi_{\Upsilon'}(v)\quad \mbox{in}\;(0,\infty)\times 
V,
 \] 
which in turn entails that
\[
\partial_t G= - \varphi(t)^{-1} \Delta \Psi_{\Upsilon'}(v)-\dot{\varphi}(t) 
\varphi(t)^{-1}G.
\]
It follows that at the maximum point $(t_*,x_*)$ we have that
\[
 0 \le - \varphi(t)^{-1} \Delta \Psi_{\Upsilon'}(v)-\dot{\varphi}(t) 
\varphi(t)^{-1}G ,
\]
which is equivalent to
\begin{align} \label{est1}
 \Delta \Psi_{\Upsilon'}(v)\le -\dot{\varphi}(t) \varphi(t)^{-1} \,Lv.
\end{align}
%Note that $B_{\exp}(v,Lv)(t_*,x_*)\le 0$, since $G$ assumes its global maximum 
at $(t_*,x_*)$.
Since $Lv(t_*,x_*)>0$ is the global maximum of $Lv(t_*,x)$ over $x\in V$, we 
may apply condition CD($F$;0), which gives
\begin{align} \label{est2}
F\big(Lv(t_*,x_*)\big)\le  \big(\Delta \Psi_{\Upsilon'}(v)\big)(t_*,x_*).
\end{align}

Setting $a=Lv(t_*,x_*) (>0)$, we infer from \eqref{est1} (at $(t_*,x_*)$) and 
\eqref{est2} that
\[
\frac{F(a)}{a}\le \frac{-\dot{\varphi}(t_*)}{\varphi(t_*)}.
\]
Since $\varphi$ satisfies the differential equation
\[
-\dot{\varphi}(t)= F(\varphi(t)),\quad t>0,
\]
it follows that
\[
\frac{F(a)}{a}\le \frac{-\dot{\varphi}(t_*)}{\varphi(t_*)}= 
\frac{F(\varphi(t_*))}{\varphi(t_*)},
\]
and thus, by strict monotonicity of $F(x)/x$,
\[
Lv(t_*,x_*)=a\le \varphi(t_*).
\]
This in turn gives
\[
G(t_*,x_*)\le 1.
\]
Since $(t_*,x_*)$ was a global maximum point of $G$ restricted to the set 
$[0,t_1]\times V$
with $t_1>0$ arbitrarily chosen, we obtain
\[
G(t_1,x)\le G(t_*,x_*)\le 1,\quad t_1\in (0,\infty),\,x\in V.
\]
This shows inequality \eqref{eq:LYfinite-copy}, which together with 
\eqref{heat2} 
implies the inequality \eqref{eq:diffHarnack-copy}.  
\end{proof}

\begin{example}
We consider the most simple case, i.e., the two point graph from 
\autoref{exa:twopoints}.
So $V=\{x_1,x_2\}$ and $\Delta u(x)=u(\tilde{x})-u(x)$, where $\tilde{x}$ 
denotes the only neighbor of $x\in V$.
Let $u:[0,\infty)\times V\to (0,\infty)$ be a solution of the heat equation on 
the graph. Then \autoref{theo:finite-graphs}
yields the estimate $-\Delta \log u\le \varphi(t)$ where $\varphi$ is given by
\[
\varphi(t)=\log\Big(\frac{1+e^{-2t}}{1-e^{-2t}}\Big)=-\log \big(\tanh 
t\big),\quad t>0,
\]
cf.\ \autoref{exa:twopoints}. Let us show that this estimate is optimal. 
Setting 
$u_i(t)=u(t,x_i)$ for $i=1,2$, the functions $u_1, u_2$ solve the ODE system
\[
\dot{u}_1=u_2-u_1,\quad
\dot{u}_2 =u_1-u_2,\quad t\ge 0.
\]
Adding the initial conditions $u_i(0)=u_i^0>0$, $i=1,2$, the solution is given 
by
\begin{align*}
u_1(t)&=\frac{1}{2}(u_1^0-u_2^0)e^{-2t}+\frac{1}{2}(u_1^0+u_2^0),\\
u_2(t)&=\frac{1}{2}(u_2^0-u_1^0)e^{-2t}+\frac{1}{2}(u_1^0+u_2^0).
\end{align*}
By symmetry, we may assume without loss of generality that $u_1^0\ge u_2^0$. 
This implies
$u_1(t)\ge u_2(t)$ for all $t\ge 0$, and thus
\[
-\Delta \log u(t,x_2)=\log\big(\frac{u_2(t)}{u_1(t)}\big)\le 0 \le 
\log\big(\frac{u_1(t)}{u_2(t)}\big)=-\Delta \log u(t,x_1)=:w(t).
\]
So we have to examine the function $w(t)$. Setting 
$\alpha=u_1^0/(u_1^0+u_2^0)>0$ and $\beta=u_2^0/(u_1^0+u_2^0)>0$ we have 
$\alpha+\beta=1$, and
for $t>0$ there holds
\[
w(t)=\log\Big(\frac{(\alpha-\beta)e^{-2t}+1}{(\beta-\alpha)e^{-2t}+1}\Big)
=\log\Big(\frac{(2\alpha-1)e^{-2t}+1}{(1-2\alpha)e^{-2t}+1}\Big)\le 
\log\Big(\frac{1+e^{-2t}}{1-e^{-2t}}\Big)=\varphi(t),
\]
here the upper estimate corresponds to the limiting and extreme case where 
$\alpha=1$ and $\beta=0$.
We can be arbitrarily close to this case by choosing $u_1^0$ and $u_2^0$ such 
that $u_1^0/u_2^0$
is sufficiently large, which shows that $-\Delta \log u\le \varphi(t)$ is a 
sharp estimate.

Note that as $t\to \infty$,
\begin{align*}
\varphi(t)=\log\Big(1+\frac{2e^{-2t}}{1-e^{-2t}} \Big)\sim 
\frac{2e^{-2t}}{1-e^{-2t}}\sim 2e^{-2t}.
\end{align*}
As $t\to 0$, we have
\begin{align*}
\varphi(t)=-\log\Big(\frac{1-e^{-2t}}{1+e^{-2t}}\Big)\sim -\log(t).
\end{align*}
\end{example}

%************************************************************************
\subsection{\texorpdfstring{$\alpha$-calculus and comparison with the square 
root approach}{alpha-calculus and comparison with the square 
root approach}} 
\label{sec-alpha}
%**********************************************************************

Let $G=(V,E)$ be a finite connected graph. Suppose that $u:[0,\infty)\times V 
\rightarrow (0,\infty)$ is a solution of the heat equation on $G$. We have 
shown that $v=\log u$ satisfies the equation
\begin{align} \label{heat2p}
\partial_t v-\Delta v= \Psi_\Upsilon(v)\quad \mbox{in}\;(0,\infty)\times V \,.
\end{align}
Letting $\alpha\in (0,1)$, \eqref{heat2p} implies that
\[
\partial_t v-\Delta v-\frac{1}{\alpha}\,\Psi_{\Upsilon_\alpha} (v)= 
\Psi_\Upsilon(v)
-\frac{1}{\alpha}\,\Psi_{\Upsilon_\alpha} (v)\quad \mbox{in}\;(0,\infty)\times 
V.
\]
In view of the identity \eqref{palmalpha} we have
\[
\frac{\Delta( u^\alpha)}{\alpha u^\alpha}=\Delta 
v+\frac{1}{\alpha}\,\Psi_{\Upsilon_\alpha} (v)
=\frac{1}{\alpha}\,\Psi_{\Upsilon'}(\alpha v),
\]
and thus
\begin{align} \label{root1}
\partial_t v-\frac{\Delta( u^\alpha)}{\alpha u^\alpha}= \Psi_\Upsilon(v)
-\frac{1}{\alpha}\,\Psi_{\Upsilon_\alpha} (v)\quad \mbox{in}\;(0,\infty)\times 
V.
\end{align}
Note that in case $\alpha=\frac{1}{2}$, identity \eqref{Gammaroot} shows that 
equation \eqref{root1} then takes the form
\[
\partial_t v-2\,\frac{\Delta( \sqrt{u})}{\sqrt{u}}= 
2\,\frac{\Gamma(\sqrt{u})}{u}
\quad \mbox{in}\;(0,\infty)\times V.
\]

We are interested in an estimate of the form
\[
-\frac{\Delta( u^\alpha)}{\alpha 
u^\alpha}=-\frac{1}{\alpha}\,\Psi_{\Upsilon'}(\alpha v)
= {\mathcal L}_\alpha(v) \le \eta(t)
\quad \mbox{in}\;(0,\infty)\times V,
\]
for some appropriate positive function $\eta$. To achieve this, we first derive 
an equation for the
temporal derivative of the quantity to be estimated. Using the equation for $v$ 
we obtain
\begin{align*}
\partial_t \Big(\frac{1}{\alpha}\,\Psi_{\Upsilon'}(\alpha v)\Big)(t,x) & = 
 \frac{1}{\mu(x)}\sum_{y\sim x} w_{xy} e^{\alpha(v(t,y)-v(t,x))}\big(\partial_t 
v(t,y)-
 \partial_t v(t,x)\big)\\
 & =  \frac{1}{\mu(x)}\sum_{y\sim x} w_{xy} e^{\alpha(v(t,y)-v(t,x))}
 \big(\Psi_{\Upsilon'}(v)(t,y)-\Psi_{\Upsilon'}(v)(t,x)\big)\\
 & = {\mathcal C}_\alpha(v)(t,x),
\end{align*}
cf.\ \eqref{Calpha}. 

\begin{theorem} \label{Thmfinitealpha}
Let $\alpha\in(0,1)$, $G=(V,E)$ be a finite connected graph which satisfies 
CD$_\alpha$($F$;0) and let $\varphi$ be the relaxation function 
corresponding to the CD-function $F$. 
Suppose that $u:[0,\infty)\times V\rightarrow (0,\infty)$ is a solution of the 
heat equation
on $G$ (equation \eqref{heat1}). Then
\begin{align} \label{LYfinitealpha}
-\frac{\Delta( u^\alpha)}{\alpha 
u^\alpha}=-\frac{1}{\alpha}\,\Psi_{\Upsilon'}(\alpha \log u)
={\mathcal L}_\alpha(\log u)\le \varphi(t)\quad \mbox{in}\;(0,\infty)\times V,
\end{align}
and consequently
\begin{align} \label{diffHarnackalpha}
\partial_t (\log u)\ge  \Psi_\Upsilon(\log u)
-\frac{1}{\alpha}\,\Psi_{\Upsilon_\alpha} (\log u)-\varphi(t)\quad 
\mbox{in}\;(0,\infty)\times V.
\end{align}
\end{theorem}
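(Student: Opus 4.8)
The plan is to run the same maximum-principle argument that establishes \autoref{theo:finite-graphs}, but now applied to the $\alpha$-regularized quantity ${\mathcal L}_\alpha(v)$ in place of $Lv$. Writing $v=\log u$, which solves \eqref{heat3}, I would set
\[
G(t,x)=\frac{1}{\varphi(t)}\,{\mathcal L}_\alpha(v)(t,x)=-\frac{1}{\alpha\varphi(t)}\,\Psi_{\Upsilon'}(\alpha v)(t,x),\quad t>0,
\]
and $G(0,x)=0$. Since $G$ is finite and $v$ is bounded on compact time intervals (the graph is finite) while $\varphi(0+)=\infty$, the function $G$ is continuous up to $t=0$. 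Fix $t_1>0$ and let $(t_*,x_*)$ be a global maximum of $G$ on $[0,t_1]\times V$; if its value is nonpositive there is nothing to prove, so assume $G(t_*,x_*)>0$, which forces $t_*>0$ and hence $\partial_t G(t_*,x_*)\ge 0$.

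For the time derivative I would use the identity $\partial_t\big(\tfrac{1}{\alpha}\Psi_{\Upsilon'}(\alpha v)\big)={\mathcal C}_\alpha(v)$ computed just before the statement, so that $\partial_t{\mathcal L}_\alpha(v)=-{\mathcal C}_\alpha(v)$ and
\[
\partial_t G=-\frac{1}{\varphi(t)}\,{\mathcal C}_\alpha(v)-\frac{\dot\varphi(t)}{\varphi(t)}\,G.
\]
Evaluating at $(t_*,x_*)$ gives ${\mathcal C}_\alpha(v)(t_*,x_*)\le -\dot\varphi(t_*)\,G(t_*,x_*)=-\tfrac{\dot\varphi(t_*)}{\varphi(t_*)}{\mathcal L}_\alpha(v)(t_*,x_*)$. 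Because $x_*$ maximizes $G(t_*,\cdot)$ and $\varphi(t_*)>0$, the spatial point $x_*$ maximizes ${\mathcal L}_\alpha(v)(t_*,\cdot)$, and $G(t_*,x_*)>0$ gives ${\mathcal L}_\alpha(v)(t_*,x_*)>0$; these are exactly the hypotheses of CD$_\alpha$($F$;0) at $x_*$ (\autoref{def:CD}), whence ${\mathcal C}_\alpha(v)(t_*,x_*)\ge F\big(Lv(t_*,x_*)\big)$.

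The step that goes beyond the $\alpha=0$ case, and which I expect to be the main obstacle, is that the maximum principle controls ${\mathcal L}_\alpha(v)$ while the curvature condition returns $F$ evaluated at $Lv$; for $\alpha=0$ these coincide, but in general they differ. I would bridge them using the two facts recorded before \autoref{def:CD}: positivity of ${\mathcal L}_\alpha(v)$ implies positivity of $Lv$, and ${\mathcal L}_\alpha(v)=Lv-\tfrac{1}{\alpha}\Psi_\Upsilon(\alpha v)\le Lv$ since $\Upsilon\ge 0$. Thus, with $a:=Lv(t_*,x_*)$ and $b:={\mathcal L}_\alpha(v)(t_*,x_*)$ we have $0<b\le a$, and combining the two bounds with $\dot\varphi=-F(\varphi)$ (\autoref{lem:Fprop}) yields $F(a)\le \tfrac{F(\varphi(t_*))}{\varphi(t_*)}\,b$. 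Dividing by $a$ and using $b/a\le 1$ gives $\tfrac{F(a)}{a}\le\tfrac{F(\varphi(t_*))}{\varphi(t_*)}$, so strict monotonicity of $x\mapsto F(x)/x$ forces $a\le\varphi(t_*)$, hence $b\le a\le\varphi(t_*)$ and $G(t_*,x_*)\le 1$. Since $t_1$ was arbitrary, this proves ${\mathcal L}_\alpha(\log u)\le\varphi(t)$, i.e.\ \eqref{LYfinitealpha}; the differential Harnack estimate \eqref{diffHarnackalpha} then follows by inserting $-{\mathcal L}_\alpha(v)=\tfrac{\Delta(u^\alpha)}{\alpha u^\alpha}\ge-\varphi(t)$ into \eqref{root1}.
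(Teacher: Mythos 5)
Your proposal is correct and takes essentially the same approach as the paper: the maximum-principle argument of \autoref{theo:finite-graphs} applied to $G(t,x)=\mathcal{L}_\alpha(v)(t,x)/\varphi(t)$, invoking CD$_\alpha$($F$;0) at the maximum point and bridging $\mathcal{L}_\alpha(v)$ and $Lv$ via $\mathcal{L}_\alpha(v)=Lv-\tfrac{1}{\alpha}\Psi_\Upsilon(\alpha v)\le Lv$. The only cosmetic difference is the order in which this bridge is used: you apply $b\le a$ on the right-hand side to conclude $Lv(t_*,x_*)\le\varphi(t_*)$ and hence $\mathcal{L}_\alpha(v)(t_*,x_*)\le\varphi(t_*)$, while the paper applies the monotonicity of $F$ on the left-hand side (via $F\bigl(\mathcal{L}_\alpha(v)\bigr)\le F\bigl(Lv\bigr)$) to bound $\mathcal{L}_\alpha(v)(t_*,x_*)$ directly; these are the same ingredients rearranged.
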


\begin{proof} The proof is almost entirely analogous to the one of 
\autoref{theo:finite-graphs}, the only difference being that
in addition one uses the inequality
\[ 
F\big({\mathcal L}_\alpha(v)(t_*,x_*)\big)\le F\big(Lv(t_*,x_*)\big),
\]
which holds, since $F$ is strictly increasing and 
\[
{\mathcal 
L}_\alpha(v)(t_*,x_*)=Lv(t_*,x_*)-\frac{1}{\alpha}\,\Psi_{\Upsilon}(\alpha 
v)(t_*,x_*)\le Lv(t_*,x_*).
\]
\end{proof}

%***********************************************************************
\section{Local Li-Yau inequalities and estimates on infinite 
graphs}\label{sec:li-yau-infinite}
%***********************************************************************

The approach of \autoref{sec:li-yau-finite} to Li-Yau type estimates is 
restricted to finite graphs. Proofs of similar results in the case of infinite 
graphs are more involved. They additionally require the use of cut-off 
functions. The same 
difficulty arises when one
aims at local Li-Yau inequalities for positive functions that solve 
the heat equation only on a part
of the graph, e.g.\ in a ball. It turns out that the $\alpha$-calculus from 
\autoref{sec-alpha} is sufficiently robust to obtain the desired estimates.  
Throughout this section we confine ourselves to Ricci-flat graphs, which 
are introduced in \autoref{subsec:Ricci-flat}.

%************************************************************************
\subsection{General Ricci-flat graphs}
%************************************************************************

Throughout this subsection we assume that

%% MK: ersetzt durch align-Umgebung
% \begin{itemize}
% \item[(R)] $G=(V,E)$ is a $D$-regular unweighted Ricci-flat graph with $D\ge 2$ 
% and $\mu(y)=\mu_0>0$ for all $y\in V$.
% \end{itemize} 

\begin{align}\tag{R}\label{eq:R}
\begin{split}
&G=(V,E) \text{ is a } D\text{-regular unweighted Ricci-flat graph with } \\ 
&D\ge 2 \text{ and } \mu(y)=\mu_0>0 \text{ for all } y\in V \,.
\end{split}
\end{align}

We first need a slight generalization of the 
CD$_\alpha$($F_\alpha$; 0) 
provided by \autoref{theo-CDRiccimainalpha}.

\begin{corollary} \label{theo-CDRiccimainalphacutoff}
Assume \eqref{eq:R} and let $\alpha\in (0,1)$. Let $x_*\in V$ and $\psi\in 
\iR^V$ such 
that $\psi(x_*)>0$ and $\psi(y)>0$ for
all $y\sim x_*$. Let $v\in \iR^V$ and suppose that the function
\[
M(x):=\psi(x){\mathcal L}_\alpha(v)(x),\quad x\in V,
\] 
has a positive local maximum at $x_*$, that is,
\[
M(x_*)>0 \quad \mbox{and}\quad M(x_*)\ge M(y)
\;\;\mbox{for all}\;y\sim x.
\]
Then the following inequality holds true.
\begin{align} \label{CDalphacutoff}
{\mathcal C}_\alpha(v)(x_*)\ge 
F_\alpha\big(Lv(x_*)\big)-\frac{1}{\mu_0}{\mathcal L}_\alpha(v)(x_*) 
\sum_{y\sim x_*} 
e^{v(y)-v(x_*)} \frac{|\psi(x_*)-\psi(y)|}{\psi(y)}.
\end{align}
where $F_\alpha$ is the function given by \eqref{RicciFalpha}. 
\end{corollary}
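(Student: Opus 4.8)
The plan is to run the proof of \autoref{theo-CDRiccimainalpha} essentially verbatim, the only change occurring at the single point where the local maximum hypothesis is invoked. Fix $x=x_*$ and adopt the notation of that proof: write $z=v(x_*)$, $z_i=v(\eta_i(x_*))$ and $z_{ij}=v(\eta_j(\eta_i(x_*)))$, where $\eta_1,\dots,\eta_D$ are the Ricci-flat maps at $x_*$. Since $\psi(x_*)>0$ and $M(x_*)=\psi(x_*)\mathcal{L}_\alpha(v)(x_*)>0$, we have $\mathcal{L}_\alpha(v)(x_*)>0$; as $\mathcal{L}_\alpha(v)=Lv-\tfrac1\alpha\Psi_\Upsilon(\alpha v)$ and $\Upsilon\ge 0$, this also forces $Lv(x_*)>0$, so that $F_\alpha(Lv(x_*))$ is well defined.

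First I would reproduce, word for word, the computation of \eqref{Calphaformula}, Young's inequality \eqref{youngalpha}, and the subsequent estimate of the inner sum over $i$. None of these steps uses the maximum property: using $z_{j^*j}=z$, \eqref{youngalpha} for $i\neq j^*$, and the identity of \autoref{lem-Ricflat}(i) (applied, for each fixed $j$, to the function $u=\Upsilon'(\alpha(v(\cdot)-z_j))$, which is legitimate since that identity holds for arbitrary $u$), one reaches exactly the intermediate bound
\begin{align*}
\sum_{i=1}^D\big(e^{z_{ij}-z_j-(1-\alpha)(z_i-z)}-e^{\alpha(z_i-z)}\big)
&\ge e^{z-z_j-(1-\alpha)(z_{j^*}-z)}-\tfrac1\alpha e^{\alpha(z-z_j)}+\tfrac{1-\alpha}\alpha e^{\alpha(z_{j^*}-z)}\\
&\quad+\mu_0\big[\mathcal{L}_\alpha(v)(x_*)-\mathcal{L}_\alpha(v)(\eta_j(x_*))\big].
\end{align*}
Under the hypothesis of \autoref{theo-CDRiccimainalpha} the final bracket is nonnegative and is simply discarded. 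Here is the one place where the argument changes: from the weighted maximum property $\psi(x_*)\mathcal{L}_\alpha(v)(x_*)\ge\psi(\eta_j(x_*))\mathcal{L}_\alpha(v)(\eta_j(x_*))$ and $\psi(\eta_j(x_*))>0$ I would instead estimate
\[
\mathcal{L}_\alpha(v)(x_*)-\mathcal{L}_\alpha(v)(\eta_j(x_*))\ge\mathcal{L}_\alpha(v)(x_*)\,\frac{\psi(\eta_j(x_*))-\psi(x_*)}{\psi(\eta_j(x_*))},
\]
keeping this term rather than dropping it.

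Finally I would multiply the inner bound by $\mu_0^{-2}e^{z_j-z}$ and sum over $j$, splitting the result into a ``main'' sum and an ``extra'' sum. The main sum is identical to the one in \autoref{theo-CDRiccimainalpha}; rewriting it through $w_j=z-\tfrac12 z_j-\tfrac12 z_{j^*}$, applying the rearrangement inequality \eqref{Rea} to pass to $\tilde w_j$, symmetrizing, and using convexity of the exponential and of $F_\alpha$ bounds it below by $F_\alpha(Lv(x_*))$ with no change. The extra sum equals $\tfrac{1}{\mu_0}\mathcal{L}_\alpha(v)(x_*)\sum_{j}e^{z_j-z}\frac{\psi(\eta_j(x_*))-\psi(x_*)}{\psi(\eta_j(x_*))}$; since $j\mapsto\eta_j(x_*)$ is a bijection onto the neighbors of $x_*$ (by conditions (i),(ii) of \autoref{defRicci}), this is $\tfrac{1}{\mu_0}\mathcal{L}_\alpha(v)(x_*)\sum_{y\sim x_*}e^{v(y)-v(x_*)}\frac{\psi(y)-\psi(x_*)}{\psi(y)}$, and using $\mathcal{L}_\alpha(v)(x_*)>0$ together with $\frac{\psi(y)-\psi(x_*)}{\psi(y)}\ge-\frac{|\psi(x_*)-\psi(y)|}{\psi(y)}$ produces precisely the error term of \eqref{CDalphacutoff}. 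The genuinely new content is confined to these last steps, and the only real care needed is to confirm that the extra term decouples cleanly from the part generating $F_\alpha$ — which it does, since it involves neither $w_j$ nor $j^*$ and is untouched by the rearrangement — while the technical heart, the rearrangement and symmetrization, is inherited unchanged from \autoref{theo-CDRiccimainalpha}.
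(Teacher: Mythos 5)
Your proposal is correct and takes essentially the same approach as the paper: rerun the proof of \autoref{theo-CDRiccimainalpha} at $x_*$, and at the single point where the local maximum property is invoked, use the weighted maximum of $M=\psi\,\mathcal{L}_\alpha(v)$ to get $\mathcal{L}_\alpha(v)(x_*)-\mathcal{L}_\alpha(v)(\eta_j(x_*))\ge \mathcal{L}_\alpha(v)(x_*)\,\big(\psi(\eta_j(x_*))-\psi(x_*)\big)/\psi(\eta_j(x_*))$, carrying this extra term (which is untouched by the rearrangement and symmetrization) through to produce the error term in \eqref{CDalphacutoff}. The only cosmetic difference is that you keep the signed quotient and pass to absolute values at the very end, using $\mathcal{L}_\alpha(v)(x_*)>0$, whereas the paper introduces the absolute value immediately.
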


\begin{proof} We follow the lines of the proof of 
\autoref{theo-CDRiccimainalpha} with $x$ replaced by $x_*$. When estimating the 
inner sum
we now have by the local maximum property of $M$
\begin{align*}
{\mathcal L}_\alpha(v)(x_*)- &\, {\mathcal L}_\alpha(v)(\eta_j(x_*))  =
\frac{1}{\psi(x_*)}\,\big(M(x_*)-M(\eta_j(x_*))\big)+\frac{
\psi(\eta_j(x_*))-\psi(x_*)}{\psi(\eta_j(x_*)) \psi(x_*)}\,
M(\eta_j(x_*))\\
& \ge -\frac{|\psi(\eta_j(x_*))-\psi(x_*)|}{\psi(\eta_j(x_*)) \psi(x_*)}\,
M(x_*)= -{\mathcal 
L}_\alpha(v)(x_*)\,\frac{|\psi(x_*)-\psi(\eta_j(x_*))|}{\psi(\eta_j(x_*))}.
\end{align*} 
Arguing as in the proof of \autoref{theo-CDRiccimainalpha}, the last term leads 
to the second term on the right of  
inequality \eqref{CDalphacutoff}. 
\end{proof}

\begin{lemma} \label{trivialalpha}
Assume \eqref{eq:R}. Let $\alpha\in (0,1)$, $x_*\in V$ and $u:\,V\to 
(0,\infty)$ such 
that $\Delta (u^\alpha)(x_*)<0$.
Then one has
\begin{align}
\sum_{y\sim x_*} u(y)\le D^{1/\alpha} u(x_*).
\end{align} 
\end{lemma}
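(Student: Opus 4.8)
The plan is to unpack the hypothesis $\Delta(u^\alpha)(x_*)<0$ into a concrete inequality among the values of $u^\alpha$ at the neighbours of $x_*$, and then to convert it into the claimed inequality among the values of $u$ by a single elementary power-sum estimate. First I would use the definition \eqref{genlaplace} of the Laplacian together with the standing assumptions \eqref{eq:R}, namely $D$-regularity, unit weights, and $\mu\equiv\mu_0$: since $x_*$ has exactly $D$ neighbours, each of weight $1$,
\[
\Delta(u^\alpha)(x_*)=\frac{1}{\mu_0}\sum_{y\sim x_*}\big(u(y)^\alpha-u(x_*)^\alpha\big)
=\frac{1}{\mu_0}\Big(\sum_{y\sim x_*}u(y)^\alpha-D\,u(x_*)^\alpha\Big).
\]
Because $\mu_0>0$, the assumption $\Delta(u^\alpha)(x_*)<0$ is then equivalent to
\[
\sum_{y\sim x_*}u(y)^\alpha< D\,u(x_*)^\alpha .
\]

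The key observation is the subadditivity of the map $t\mapsto t^\alpha$ on $[0,\infty)$ for $\alpha\in(0,1)$. Since this function is concave and vanishes at $0$, a one-line argument (for $\lambda\in[0,1]$ one has $\lambda^\alpha+(1-\lambda)^\alpha\ge \lambda+(1-\lambda)=1$) together with a straightforward induction over the finitely many positive summands gives
\[
\Big(\sum_{y\sim x_*}u(y)\Big)^\alpha\le \sum_{y\sim x_*}u(y)^\alpha .
\]
Equivalently, this is the monotonicity of $\ell^p$-norms, $\|\cdot\|_{\ell^1}\le\|\cdot\|_{\ell^\alpha}$, applied to the (genuinely nonnegative, since $u>0$) vector $\big(u(y)\big)_{y\sim x_*}$. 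Combining it with the reformulated hypothesis yields
\[
\Big(\sum_{y\sim x_*}u(y)\Big)^\alpha\le \sum_{y\sim x_*}u(y)^\alpha< D\,u(x_*)^\alpha .
\]

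Finally I would raise both ends to the power $1/\alpha>0$, which is order-preserving on $[0,\infty)$, to obtain
\[
\sum_{y\sim x_*}u(y)<\big(D\,u(x_*)^\alpha\big)^{1/\alpha}=D^{1/\alpha}\,u(x_*),
\]
which is the asserted bound (in fact with a strict inequality, so the non-strict statement has room to spare). There is essentially no obstacle here; the lemma is elementary, as its role in the paper as an auxiliary step suggests. The only points requiring a moment's care are to note that $D$-regularity makes the neighbour count exactly $D$, so that the subtracted term collapses to $D\,u(x_*)^\alpha$, and that the positivity of $u$ is what legitimises applying the power inequality to nonnegative quantities.
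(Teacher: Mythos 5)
Your proof is correct and follows essentially the same route as the paper's: both arguments rest on the subadditivity inequality $\big(\sum_j a_j\big)^\alpha \le \sum_j a_j^\alpha$ for $\alpha\in(0,1)$ combined with unpacking $\Delta(u^\alpha)(x_*)<0$ into $\sum_{y\sim x_*}u(y)^\alpha < D\,u(x_*)^\alpha$. The only cosmetic difference is that the paper first normalizes by $u(x_*)$ and applies the power inequality to the ratios $u(y)/u(x_*)$, whereas you apply it directly to the sum and raise to the power $1/\alpha$ at the end.
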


\begin{proof}
For positive numbers $a_j$, $j=1,\ldots,D$ we have the inequality
\[
\big(\sum_{j=1}^D a_j\big)^\alpha \le  \sum_{j=1}^D a_j^\alpha, 
\]
and thus
\begin{align*}
\sum_{y\sim x_*} \frac{u(y)}{u(x_*)} & \le  \big(\sum_{y\sim x_*} 
\frac{u(y)^\alpha }{u(x_*)^\alpha}\big)^{1/\alpha}\\
& =\Big( \frac{1}{u(x_*)^\alpha}\,\sum_{y\sim x_*} 
\big(u(y)^\alpha-u(x_*)^\alpha\big) +D  \Big)^{1/\alpha}
\le D^{1/\alpha}.
\end{align*}
\end{proof}

The following lemma is a very useful auxiliary result when proving estimates 
involving CD-functions.

\begin{lemma} \label{lem-superadd}
Let $g:[0,\infty)\rightarrow [0,\infty)$ be a strictly increasing $C^1$ 
function with $g(0)=0$.
Assume that there exists $a_*>0$ such that $c_0:=\min_{[0,a_*]} g'>0$ and 
$g|_{[a_*,\infty)}$
is convex. Let $c_1=\max_{[0,a_*]} g'$ and set $\gamma=c_1/c_0$. Then there 
holds
\begin{align} \label{superH}
g(x)+g(y)\le g(x+\gamma y),\quad x,y\in [0,\infty).
\end{align}
\end{lemma}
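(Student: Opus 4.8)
The plan is to fix $x\ge 0$, regard the desired inequality as a statement about the single-variable function
\[
\Phi(y):=g(x+\gamma y)-g(x)-g(y),\qquad y\ge 0,
\]
and to prove $\Phi\ge 0$ by combining $\Phi(0)=0$ with $\Phi'\ge 0$. Since $g(0)=0$, we have $\Phi(0)=g(x)-g(x)-0=0$, so everything reduces to the monotonicity of $\Phi$. Differentiating in $y$ (this is the decisive choice, since the factor $\gamma$ then appears in front of the ``fast'' argument) gives
\[
\Phi'(y)=\gamma\,g'(x+\gamma y)-g'(y),
\]
and the whole lemma comes down to verifying the pointwise estimate $\gamma\,g'(x+\gamma y)\ge g'(y)$ for all $x,y\ge 0$.

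Before the case analysis I would record two elementary facts. First, $\gamma=c_1/c_0\ge 1$, because $c_1=\max_{[0,a_*]}g'\ge\min_{[0,a_*]}g'=c_0>0$. Second, $g'\ge c_0$ on all of $[0,\infty)$: on $[0,a_*]$ this is the definition of $c_0$, and on $[a_*,\infty)$ the convexity of $g$ makes $g'$ nondecreasing, whence $g'(s)\ge g'(a_*)\ge c_0$ for every $s\ge a_*$. With these in hand, the estimate $\gamma\,g'(x+\gamma y)\ge g'(y)$ splits according to the size of $y$.

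If $y\le a_*$, then $g'(y)\le c_1$, while the global lower bound yields $\gamma\,g'(x+\gamma y)\ge\gamma c_0=c_1$, so $\Phi'(y)\ge c_1-g'(y)\ge 0$. If $y>a_*$, then $x+\gamma y\ge\gamma y\ge y>a_*$, so both $y$ and $x+\gamma y$ lie in the region $[a_*,\infty)$ on which $g'$ is nondecreasing; hence $g'(x+\gamma y)\ge g'(y)$, and multiplying by $\gamma\ge 1$ again gives $\Phi'(y)\ge 0$. In both cases $\Phi'\ge 0$, so $\Phi(y)\ge\Phi(0)=0$ for all $y\ge 0$, which is exactly \eqref{superH}.

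I expect no serious obstacle; the only point requiring genuine care is the choice to differentiate in $y$ rather than $x$. Had I fixed $y$ and varied $x$ instead, the derivative $g'(x+\gamma y)-g'(x)$ would carry no compensating factor of $\gamma$, and the small-argument region, where $g'$ need not be monotone, could not be controlled. The crux that makes the one-line case split succeed is precisely the identity $\gamma c_0=c_1$, which is exactly the constant needed to dominate $g'$ on $[0,a_*]$, together with the observation that $g'\ge c_0$ holds globally.
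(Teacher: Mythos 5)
Your proof is correct. All the ingredients check out: $\gamma\ge 1$ and the global bound $g'\ge c_0$ (definition of $c_0$ on $[0,a_*]$, monotonicity of $g'$ on $[a_*,\infty)$ from convexity) are established correctly, the case $y\le a_*$ uses exactly the identity $\gamma c_0=c_1$ to get $\gamma\,g'(x+\gamma y)\ge c_1\ge g'(y)$, and the case $y>a_*$ correctly places both $y$ and $x+\gamma y$ in the region where $g'$ is nondecreasing. Integrating $\Phi'\ge 0$ from $\Phi(0)=0$ finishes the argument.

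Your route is genuinely different from the paper's. The paper works directly with function values: it first reduces to the case $x\ge y$ by a symmetry observation (if $x<y$ then $y+\gamma x\le x+\gamma y$, so it suffices to treat the ordered pair), then in the case $y\le a_*$ estimates $g(x)+g(y)\le g(x)+c_1y=g(x)+c_0\bigl((x+\gamma y)-x\bigr)\le g(x+\gamma y)$ using $g'\ge c_0$, and in the case $a_*<y\le x$ first proves the auxiliary bound $g(y)\le\gamma g'(y)y$ via a tangent-line estimate at $a_*$ and then concludes with convexity and $g'(y)\le g'(x)$ — this last step is where the ordering $x\ge y$ is essential. Your differential argument dispenses with the symmetry reduction altogether and replaces the paper's somewhat intricate second case by the one-line monotonicity comparison $g'(x+\gamma y)\ge g'(y)$; the entire lemma is compressed into a single pointwise derivative inequality. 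What the paper's version buys in exchange is that it never differentiates the composite $y\mapsto g(x+\gamma y)$ and exhibits the mechanism (tangent lines and the Bregman-type gap) more explicitly, but as a matter of economy and transparency your argument is the cleaner of the two; both rest on the same structural facts, namely $\gamma c_0=c_1$, the global bound $g'\ge c_0$, and the monotonicity of $g'$ beyond $a_*$.
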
 

\begin{proof}
Let $x,y\in [0,\infty)$. It suffices to show \eqref{superH} in the 
case $x\ge y$. In fact,
if $x<y$, we have (since $\gamma\ge 1$)
\[
y+\gamma x\le x+\gamma y,
\]
and thus $g(y+\gamma x)\le g(x+\gamma y)$.

So let us assume that $x\ge y$. We consider two cases. Suppose first that $y\le 
a_*$.
Set $\xi=x+\gamma y$. Then
\begin{align*}
g(x)+g(y)\le g(x)+c_1 y=g(x)+c_0(\xi-x)\le g(\xi),
\end{align*}
since $g'\ge c_0$ in $[0,\infty)$. So the desired inequality holds.

Now suppose that $a_*<y\,(\le x)$. By convexity of $g$ in $[a_*,\infty)$ and 
since $g(a_*)\le c_1 a_*$,
we have
\begin{align*}
g(y) & \le g(a_*)+g'(y)(y-a_*) \le c_1 a_*+g'(y)(y-a_*)\\
& \le \gamma g'(y)y+c_1 a_*-g'(y)\big((\gamma-1)y+a_*\big)\\
& \le \gamma g'(y)y+c_1 a_*-c_0\big((\gamma-1)y+a_*\big)\\
& = \gamma g'(y)y-(c_1-c_0)(y-a_*)\le \gamma g'(y)y.
\end{align*}
Using this inequality and the convexity of $g$ in $[a_*,\infty)$, it follows 
that
\begin{align*}
g(x)+g(y) & \le g(x)+\gamma g'(y)y\le g(x)+\gamma g'(x)y\\
& = g(x)+\big((x+\gamma y)-x \big)g'(x)\le g(x+\gamma y).
\end{align*}
This proves the lemma.
\end{proof}

The main result in this subsection is the following.

\begin{theorem} \label{thm-alphainfinity}
Assume \eqref{eq:R}. Let $x_0\in V$ and $r\in \iN$. Let the function 
$u:\,[0,\infty)\times V\to (0,\infty)$ be 
a solution of the heat equation on the ball $\bar{B}_{2r}(x_0)=\{y\in 
V:\,d(y,x_0)\le 2r\}$, that is,
\[
\partial_t u(t,x)-\Delta u(t,x)=0,\quad t\ge 0,\,x\in\bar{B}_{2r}(x_0) .
\]
Then for any $\alpha\in (0,1)$ there exists a constant $C=C(\alpha,\mu_0,D)>0$ 
such that
\begin{align}
\partial_t (\log u)\ge \Psi_\Upsilon(\log u)
-\frac{1}{\alpha}\,\Psi_{\Upsilon_\alpha} (\log 
u)-\varphi_\alpha(t)-\frac{C}{r}\quad \mbox{in}\;(0,\infty)\times 
\bar{B}_{r}(x_0),
\end{align}
where $\varphi_\alpha$ is the relaxation function corresponding to the 
CD-function $F_\alpha$
given in \eqref{RicciFalpha}.
\end{theorem}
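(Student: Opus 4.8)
The plan is to run the parabolic maximum‑principle argument of \autoref{theo:finite-graphs} and \autoref{Thmfinitealpha}, now localised by a spatial cut‑off. Writing $v=\log u$, I would first reduce the statement to the pointwise bound
\[
{\mathcal L}_\alpha(v)(t,x)\le \varphi_\alpha(t)+\frac{C}{r},\qquad (t,x)\in(0,\infty)\times\bar B_r(x_0).
\]
Indeed, on $\bar B_{2r}(x_0)$ the identity \eqref{root1} together with ${\mathcal L}_\alpha(v)=-\Delta(u^\alpha)/(\alpha u^\alpha)$ gives $\partial_t v=\Psi_\Upsilon(v)-\tfrac1\alpha\Psi_{\Upsilon_\alpha}(v)-{\mathcal L}_\alpha(v)$, so the displayed bound is precisely the asserted differential Harnack inequality. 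Two facts will drive the proof. The first is the universal a priori bound ${\mathcal L}_\alpha(v)(x)=\tfrac{1}{\alpha\mu_0}\sum_{y\sim x}\big(1-e^{\alpha(v(y)-v(x))}\big)\le D/(\alpha\mu_0)=:K_0$, valid at every vertex for every positive $u$. The second is a cut‑off $\psi\colon V\to(0,1]$ with $\psi\equiv 1$ on $\bar B_r(x_0)$ and discrete gradient control $|\psi(x)-\psi(y)|/\psi(y)\le C_0/r$ for neighbours $x\sim y$ in $\bar B_{2r}(x_0)$; note that this very requirement forces $\psi$ to stay bounded below by a positive constant on $\bar B_{2r}(x_0)$.

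Fix $t_1>0$ and set $G(t,x)=\psi(x){\mathcal L}_\alpha(v)(t,x)-\varphi_\alpha(t)$ on the (finite) compact set $[0,t_1]\times\bar B_{2r}(x_0)$. Because $\varphi_\alpha(t)\to\infty$ as $t\to0+$, a maximum point $(t_*,x_*)$ with $G(t_*,x_*)>0$ has $t_*>0$, $\psi(x_*)>0$ and ${\mathcal L}_\alpha(v)(t_*,x_*)>0$, the last being equivalent to $\Delta(u^\alpha)(t_*,x_*)<0$. Provided $x_*\in\bar B_{2r-1}(x_0)$ — so that all its neighbours lie in $\bar B_{2r}(x_0)$ and the evolution identity $\partial_t{\mathcal L}_\alpha(v)=-{\mathcal C}_\alpha(v)$ computed before \autoref{Thmfinitealpha} is available — the condition $\partial_t G(t_*,x_*)\ge 0$ gives, using $\dot\varphi_\alpha=-F_\alpha(\varphi_\alpha)$,
\[
\psi(x_*)\,{\mathcal C}_\alpha(v)(t_*,x_*)\le -\dot\varphi_\alpha(t_*)=F_\alpha\big(\varphi_\alpha(t_*)\big).
\]
Since $\varphi_\alpha(t_*)$ is constant in $x$, the spatial maximum of $G$ at $x_*$ is a positive local maximum of $M=\psi{\mathcal L}_\alpha(v)$, so \autoref{theo-CDRiccimainalphacutoff} is applicable.

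At this point \autoref{theo-CDRiccimainalphacutoff} yields ${\mathcal C}_\alpha(v)(x_*)\ge F_\alpha(Lv(x_*))-E$ with $E=\tfrac1{\mu_0}{\mathcal L}_\alpha(v)(x_*)\sum_{y\sim x_*}e^{v(y)-v(x_*)}|\psi(x_*)-\psi(y)|/\psi(y)$. As ${\mathcal L}_\alpha(v)(x_*)>0$ is equivalent to $\Delta(u^\alpha)(x_*)<0$, \autoref{trivialalpha} gives $\sum_{y\sim x_*}e^{v(y)-v(x_*)}\le D^{1/\alpha}$, and with the gradient bound for $\psi$ this shows $E\le\tfrac{C_1}{r}{\mathcal L}_\alpha(v)(x_*)\le\tfrac{C_1}{r}Lv(x_*)$ for some $C_1=C_1(\alpha,\mu_0,D)$. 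Writing $\ell=Lv(x_*)$, the crucial move is to absorb the linear perturbation into a shift of the argument: using the quantitative strict convexity of $F_\alpha$ recorded in \autoref{CDFexample} (whence $F_\alpha'(s)\ge c\,s$ near $0$ and $F_\alpha'$ bounded below thereafter), one has $F_\alpha(\ell)-\tfrac{C_1}{r}\ell\ge F_\alpha\big((\ell-\tfrac{C}{r})_+\big)$ — an absorption of exactly the type furnished by \autoref{lem-superadd}. Combining this with $\psi(x_*)F_\alpha(X)\ge F_\alpha(\psi(x_*)X)$ (convexity and $F_\alpha(0)=0$) and the previous display gives $F_\alpha\big(\psi(x_*)(\ell-\tfrac{C}{r})_+\big)\le F_\alpha(\varphi_\alpha(t_*))$, so by monotonicity $\psi(x_*)\ell\le\varphi_\alpha(t_*)+\tfrac{C}{r}$. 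Hence $G(t_*,x_*)\le\psi(x_*)\ell-\varphi_\alpha(t_*)\le C/r$; restricting to $\bar B_r(x_0)$, where $\psi\equiv1$, and letting $t_1$ be arbitrary proves the reduced bound.

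The step requiring the most care is the interplay of the cut‑off with the boundary of $\bar B_{2r}(x_0)$. The gradient condition $|\psi(x)-\psi(y)|/\psi(y)\le C_0/r$ prevents $\psi$ from vanishing, so \autoref{theo-CDRiccimainalphacutoff}, which needs $\psi>0$ at $x_*$ and at all of its neighbours, can only be invoked when $x_*\in\bar B_{2r-1}(x_0)$; excluding a spatial maximum on the outer sphere $\{d(\cdot,x_0)=2r\}$, where some neighbours leave $\bar B_{2r}(x_0)$ and the heat equation is no longer at our disposal, is exactly where the a priori bound ${\mathcal L}_\alpha(v)\le K_0$ and the factor‑two margin between $\bar B_r$ and $\bar B_{2r}$ are spent, at the cost of enlarging $C$. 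A second delicate point is obtaining a genuine $O(1/r)$ shift of $\varphi_\alpha(t)$ rather than an $O(1/\sqrt r)$ loss: this is precisely why one keeps the error proportional to ${\mathcal L}_\alpha(v)(x_*)$ through \autoref{trivialalpha} instead of crudely bounding it by $K_0$, and why the sharp lower bound on $F_\alpha'$ near the origin is needed in the absorption step.
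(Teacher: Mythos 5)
Your overall architecture (localize the maximum‑principle argument with a spatial cut‑off, apply \autoref{theo-CDRiccimainalphacutoff} at an interior maximum, control the error term via \autoref{trivialalpha}, then absorb it) is the right one, but your cut‑off design contains a genuine gap that breaks the claimed $O(1/r)$ error. You impose two requirements on $\psi$ simultaneously: a uniform relative‑gradient bound $|\psi(x)-\psi(y)|/\psi(y)\le C_0/r$ on $\bar B_{2r}(x_0)$, and usability of the factor‑two margin at the outer sphere. These are incompatible over an annulus of width $r$: with relative decrements of at most $C_0/r$ per step and only $r$ steps available, $\psi$ can decay only by the constant factor $(1+C_0/r)^{-r}\ge e^{-C_0}$, so — as you yourself note — $\psi\ge c(\alpha,\mu_0,D)>0$ on all of $\bar B_{2r}(x_0)$, \emph{including} the sphere $\{d(\cdot,x_0)=2r\}$. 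Now suppose the maximum of $G=\psi\,{\mathcal L}_\alpha(v)-\varphi_\alpha$ sits on that outer sphere. There neither the evolution identity $\partial_t{\mathcal L}_\alpha(v)=-{\mathcal C}_\alpha(v)$ nor any CD‑type inequality is available, so the only tool is the a priori bound ${\mathcal L}_\alpha(v)\le K_0=D/(\alpha\mu_0)$, which yields $G(t_*,x_*)\le \psi(x_*)K_0\le K_0$ — an error of order one, \emph{not} $O(1/r)$. Propagating this to $\bar B_r(x_0)$ gives only ${\mathcal L}_\alpha(v)\le\varphi_\alpha(t)+K_0$; no enlargement of the constant $C$ converts an $r$‑independent additive error into $C/r$. (Even relaxing the gradient bound to $\log r/r$, so that $\psi\sim 1/r$ at the boundary, yields at best an $O(\log r/r)$ error.) This matters because the whole point of the $C/r$ term is that it vanishes as $r\to\infty$, cf.\ \autoref{cor:tau-business}.

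The paper resolves exactly this tension with the opposite trade‑off, via the piecewise‑linear cut‑off \eqref{psidef}: $\psi$ \emph{vanishes} at distance $2r$, so a positive maximum of $\tilde G=\psi{\mathcal L}_\alpha(v)/\varphi_\alpha$ is automatically interior and the evolution identity is always at hand. The price is that the relative gradient $|\psi(x_*)-\psi(y)|/\psi(y)=1/(s-1)$ (where $\psi(x_*)=s/r$) is \emph{not} uniformly $O(1/r)$ near the boundary; the paper compensates by a three‑case analysis: at $d(x_*,x_0)=2r-1$ (where a neighbour has $\psi=0$ and the cut‑off corollary is inapplicable) the a priori bound is multiplied by the \emph{small} value $\psi(x_*)=1/r$, giving $\tilde G\le D/(\alpha\mu_0 r\varphi)$; and in the intermediate annulus the large error $\omega_s\sim 1/(s-1)$ produced by \autoref{theo-CDRiccimainalphacutoff} enters the final bound only through $\psi(x_*)\bigl(1+\varphi^{-1}\gamma H^{-1}(\omega_s)\bigr)=\tfrac{s}{r}+\tfrac{\gamma}{r}\,s H^{-1}(\omega_s)\varphi^{-1}$, and $s\,H^{-1}(\omega_s)$ is uniformly bounded because $H(x)=F_\alpha(x)/x$ is linear near $0$ (this is where \autoref{lem-superadd} and \autoref{CDFexample} are spent). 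In short: the mechanism producing $O(1/r)$ is that the cut‑off is small precisely where its relative gradient is large — the mechanism your uniform relative‑gradient requirement eliminates. (A secondary, repairable point: your absorption inequality $F_\alpha(\ell)-\tfrac{C_1}{r}\ell\ge F_\alpha\bigl((\ell-\tfrac{C}{r})_+\bigr)$ fails for $\ell\lesssim C/r$, since then the left side can be negative; one must split off the trivial regime $\ell\le 2C/r$ first.)
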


\begin{proof} Setting $v=\log u$ we know from \autoref{sec-alpha} that
\begin{align} \label{infalpha0}
\partial_t v+{\mathcal L}_\alpha(v)=\Psi_\Upsilon(v)
-\frac{1}{\alpha}\,\Psi_{\Upsilon_\alpha} (v)\quad \mbox{in}\;(0,\infty)\times 
\bar{B}_{2r}(x_0),
\end{align}
and
\begin{align} \label{infalpha1}
\partial_t {\mathcal L}_\alpha(v)= - {\mathcal C}_\alpha(v)\quad 
\mbox{in}\;(0,\infty)\times 
\bar{B}_{2r}(x_0).
\end{align}
We define a cut-off function $\psi:\,V\to [0,\infty)$ by
\begin{align} \label{psidef}
\psi(x)=\left\{ \begin{array}{l@{\;:\;}l}
0 & 2r<d(x,x_0) \\
\frac{2r-d(x,x_0)}{r} & r\le d(x,x_0)\le 2r\\
1 & d(x,x_0)<r.
\end{array} \right.
\end{align}
Let $\alpha\in (0,1)$ be fixed and set $\varphi(t)=\varphi_\alpha(t)$. We 
consider the quantities $G$ and $\tilde{G}$ defined on $[0,\infty)\times V$ by
\[
G(t,x)=\frac{{\mathcal L}_\alpha(v)(t,x)}{\varphi(t)}\quad \mbox{and}\;\;
\tilde{G}(t,x)=\psi(x)G(t,x),\quad t>0,\,x\in V,
\]
and $G(0,x)=\tilde{G}(0,x)=0$, $x\in V$. Note that $G$ and $\tilde{G}$ are 
continuous in time, since $\varphi(t)\rightarrow \infty$ as $t \to 0+$.

Multiplying \eqref{infalpha1} by $\psi(x)\varphi(t)^{-1}$ we obtain
\begin{align} \label{infalpha2}
\partial_t \tilde{G}(t,x)= -\varphi(t)^{-1}\psi(x) {\mathcal C}_\alpha(v)(t,x)
-\dot{\varphi}(t) \varphi(t)^{-1}\tilde{G}(t,x)\quad 
\mbox{in}\;(0,\infty)\times 
\bar{B}_{2r}(x_0).
\end{align}

Let $t_1>0$ be arbitrarily fixed. Suppose that $\tilde{G}$ (restricted to the 
set 
$[0,t_1]\times \bar{B}_{2r}(x_0)$) assumes the global maximum at 
$(t_*,x_*)\in[0,t_1]\times 
\bar{B}_{2r}(x_0)$ and that 
$\tilde{G}(t_*,x_*)>0$. By definition of $\tilde{G}$ it is clear that $t_*>0$ 
and $x_*\in 
B_{2r}(x_0)=\{y\in V:\,d(y,x_0)< 2r\}$. In particular, we have
$(\partial_t \tilde{G})(t_*,x_*)\ge 0$. 

We now distinguish three cases.

{\bf Case 1:} $x_*\in B_r(x_0)$. Then $\psi(x_*)=1$ and also 
$\psi(y)=1$ for all $y\sim x$. Thus
${G}(t_*,x_*)=\tilde{G}(t_*,x_*)\ge \tilde{G}(t,y)=G(t,y)$ for all $t\in 
[0,t_1]$ and all $y\sim x$.
In particular, ${\mathcal L}_\alpha(v)(t_*,x)\ge {\mathcal L}_\alpha(v)(t_*,y)$ 
for all $y\sim x$. So we can apply CD$_\alpha$($F_\alpha$; 0) from 
\autoref{theo-CDRiccimainalpha}
(with local maximum of ${\mathcal L}_\alpha(v)$), thereby obtaining 
at the maximum point that
\[
\psi(x_*) \varphi^{-1}(t_*)F_\alpha\big({\mathcal L}_\alpha(v)(t_*,x)\big)\le
  -\psi(x_*)\dot{\varphi}(t_*) \varphi^{-1}(t_*)G(t_*,x_*).
\]
We can then argue as in the proof of \autoref{Thmfinitealpha} (see also 
\autoref{theo:finite-graphs}) to find that $G(t_*,x_*)\le 1$, which implies
\[
\tilde{G}(t_1,x)\le \tilde{G}(t_*,x_*)=G(t_*,x_*)\le 1,\quad x\in 
\bar{B}_{2r}(x_0).
\]

{\bf Case 2:} $d(x_*,x_0)=2r-1$. In this case $\psi(x_*)=\frac{1}{r}$. Here we 
estimate $\tilde{G}$ directly without using \eqref{infalpha2}. We have for 
arbitrary $t>0$ and $x\in V$
\begin{align*}
{\mathcal L}_\alpha(v)(t,x)  =-\frac{\Delta( u^\alpha)(t,x)}{\alpha 
u(t,x)^\alpha}
=\frac{1}{\alpha \mu_0}\,\sum_{y\sim 
x}\Big(1-\frac{u(t,y)^\alpha}{u(t,x)^\alpha}\Big)
\le \frac{D}{\alpha \mu_0},
\end{align*}
by positivity of $u$. Hence
\[
\tilde{G}(t_1,x)\le \tilde{G}(t_*,x_*)=\frac{\psi(x_*){\mathcal 
L}_\alpha(v)(t,x)}{\varphi(t)}
\le \frac{D}{\alpha\mu_0 r \varphi(t_*)}\le \frac{D}{\alpha\mu_0 r 
\varphi(t_1)},\quad x\in 
\bar{B}_{2r}(x_0),
\]
since $\varphi$ is non-increasing.

{\bf Case 3:} $\psi(x_*)=s/r$ with $s\in \{2,\ldots,r\}$. Here $\psi(y)>0$ for 
all $y\sim x$, and 
thus we may apply \autoref{theo-CDRiccimainalphacutoff} at the time level 
$t_*$. 
From \eqref{infalpha2} we then obtain at the maximum point $(t_*,x_*)$
\begin{align} \label{infalpha3}
F_\alpha\big(Lv(t_*,x_*)\big)\le  {\mathcal 
L}_\alpha(v)(t_*,x_*)\Big(\frac{1}{\mu_0}\, \sum_{y\sim x_*} 
e^{v(t_*,y)-v(t_*,x_*)} \frac{|\psi(x_*)-\psi(y)|}{\psi(y)}-\dot{\varphi}(t_*) 
\varphi(t_*)^{-1}\Big).
\end{align}
By definition of $\psi$, we have for all $y\sim x_*$
\[
 \frac{|\psi(x_*)-\psi(y)|}{\psi(y)}\le \frac{1/r}{(s-1)/r}\,=\frac{1}{s-1}.
\]
Further, we know that $\frac{\Delta( u^\alpha)(t_*,x_*)}{\alpha 
u(t_*,x_*)^\alpha}<0$, which
implies $\Delta( u^\alpha)(t_*,x_*)<0$ as well, by positivity of $u$. 
\autoref{trivialalpha} then yields
\[
\sum_{y\sim x_*} u(t_*,y)\le D^{1/\alpha} u(t_*,x_*),
\]
and thus
\begin{align*}
\sum_{y\sim x_*} 
e^{v(t_*,y) \,-v(t_*,x_*)} & \frac{|\psi(x_*)-\psi(y)|}{\psi(y)} \le 
\frac{1}{s-1}\,\sum_{y\sim x_*} 
e^{v(t_*,y)-v(t_*,x_*)}\\
& = \frac{1}{s-1}\,\sum_{y\sim x_*} \frac{u(t_*,y)}{u(t_*,x_*)}\le  
\frac{D^{1/\alpha}}{s-1}.
\end{align*}
Using this estimate, together with the ODE for the relaxation function 
$\varphi$ and
\[
F_\alpha\big( {\mathcal L}_\alpha(v)(t_*,x_*)\big)\le 
F_\alpha\big(Lv(t_*,x_*)\big),
\]
it follows from \eqref{infalpha3} that
\[
F_\alpha\big( {\mathcal L}_\alpha(v)(t_*,x_*)\big)\le  
{\mathcal L}_\alpha(v)(t_*,x_*)\Big(\frac{D^{1/\alpha}}{\mu_0 
(s-1)}\,+\frac{F_\alpha\big(\varphi(t_*)
\big)}{ \varphi(t_*)}\Big).
\]
We define the function $H:\,[0,\infty)\to [0,\infty)$ by $H(0)=0$ and 
$H(x)=F_\alpha(x)/x$ for $x>0$.
We also put $\omega_s= \frac{D^{1/\alpha}}{\mu_0 (s-1)}$. Suppressing the 
arguments, the last
inequality  is then equivalent to
\begin{align} \label{infalpha4}
H\big( {\mathcal L}_\alpha(v)\big)\le H(\varphi)+\omega_s.
\end{align}
By \autoref{CDFexample}, we may apply \autoref{lem-superadd} to the function 
$g=H$. 
Let $\gamma=\gamma(\alpha,D,\mu_0)>0$
be the corresponding constant. Then \autoref{lem-superadd} gives
\[
H(\varphi)+\omega_s= H(\varphi)+H\big( H^{-1}(\omega_s)\big)\le 
H\big(\varphi+\gamma H^{-1}(\omega_s)\big),
\]
which when combined with \eqref{infalpha4} yields
\[
H\big( {\mathcal L}_\alpha(v)\big)\le H\big(\varphi+\gamma 
H^{-1}(\omega_s)\big).
\]
Since $H$ is strictly increasing, we deduce that
\[
{\mathcal L}_\alpha(v)(t_*,x_*)\le \varphi(t_*)+\gamma H^{-1}(\omega_s),
\]
that is
\[
G(t_*,x_*)\le 1+\varphi(t_*)^{-1}\gamma H^{-1}(\omega_s).
\]
We now find that
\begin{align*}
\tilde{G}(t_1,x) & \le \tilde{G}(t_*,x_*)=\frac{s}{r}G(t_*,x_*)\\
& \le \frac{s}{r}\big(1+\varphi(t_*)^{-1}\gamma H^{-1}(\omega_s)\big)\\
& \le 1+\frac{\gamma C_s}{r} \varphi(t_1)^{-1},
\end{align*}
where $C_s:=s H^{-1}(\omega_s)$. It is now not difficult to check that there 
exists a number 
$M=M(\alpha,D,\mu_0)>0$ such that $C_s\le M$ for all $s\ge 2$ (recall that 
$H(x)$ behaves as a linear function as $x\to 0$).
It follows that
\[
\tilde{G}(t_1,x)\le  1+\frac{\gamma M}{r} \varphi(t_1)^{-1},\quad x\in 
\bar{B}_{2r}(x_0).
\]

Combining all three cases we see that for arbitrary $t_1>0$
\[
\tilde{G}(t_1,x)\le  1+\max\Big\{\gamma M,\frac{D}{\alpha\mu_0}\Big\}\,\frac{ 
\varphi(t_1)^{-1}}{r},\quad x\in 
\bar{B}_{2r}(x_0),
\]
which implies that
\[
{\mathcal L}_\alpha(v)(t,x)\le  \varphi(t)+\max\Big\{\gamma 
M,\frac{D}{\alpha\mu_0}\Big\}\,\frac{1}{r},\quad t>0,\,x\in 
\bar{B}_{r}(x_0).
\]
This together with \eqref{infalpha0} proves the theorem.
\end{proof}

We remark that 
\[
\varphi_\alpha(t)\sim \frac{D}{2(1-\alpha)t}\quad \mbox{as}\;t\to \infty.
\]
This follows from \autoref{lem:asymptrelax} and the fact that $F_\alpha(a)\sim 
\frac{2(1-\alpha)}{D}a^2$ as $a\to 0+$.
Using \autoref{lem:asymptrelax} we also see that
\[
\varphi_\alpha(t)\sim -\frac{D}{\mu_0(1-\alpha)}\,\log t\quad \mbox{as}\;t\to 
0+.
\]

%**********************************************************************
\subsection{The example \texorpdfstring{$\iZ$}{Z}}\label{sec:example-Z}
%***********************************************************************
In the special case where the graph is given by the lattice $\iZ$ we can 
improve the 
estimate of \autoref{thm-alphainfinity}
in two ways. On the one hand, we are able to treat the limit case 
$\alpha=0$. On the other hand, we obtain
an estimate with the relaxation function $\varphi$ associated to the full
CD-function $F$ given by
\eqref{RicciF} with $D=2$ and $\mu_0=1$. This is possible due to the special 
structure of the term
$\Delta \Psi_{\Upsilon'}(v)$. Note that
\[ 
\varphi(t)\sim \frac{1}{2t} \quad \mbox{as}\;t\to \infty,
\]
and
\[
\varphi(t)\sim -2\log t \quad \mbox{as}\;t\to 0+,
\]
by \autoref{lem:asymptrelax}.

\begin{lemma} \label{ThetaInequ}
Let $G=(V,E)$ be the lattice $\iZ$ without weights and with $\mu\equiv 1$ on 
$V=\iZ$.
Let $\eta_j:\,\iZ\to \iZ$, $j=1,2$ be defined by $\eta_1(x)=x-1$ and 
$\eta_2(x)=x+1$.
Then for any $v\in \iR^V$ and $x\in \iZ$ there holds
\[
\Delta \Psi_{\Upsilon'}(v)(x)\ge 
2e^{-\frac{Lv(x)}{2}}\Upsilon(Lv(x))+\Theta(v)(x)
\]
where
\begin{align} \label{ThetaDef}
\Theta(v)(x) =\sum_{j=1}^2 e^{v(\eta_j(x))-v(x)}
\Big(e^{-Lv(\eta_j(x))}-1+Lv(x)\Big).
\end{align}
Moreover, if $Lv(x)\ge 1$, we have
\[
\Theta(v)(x) \ge 2e^{-\frac{Lv(x)}{2}}\big(Lv(x)-1\big).
\]
\end{lemma}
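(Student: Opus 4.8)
The plan is to reduce everything to the two increments $p := v(\eta_1(x))-v(x) = v(x-1)-v(x)$ and $q := v(\eta_2(x))-v(x) = v(x+1)-v(x)$, together with the second-order data $b_j := Lv(\eta_j(x))$, and to exploit the single constraint $p+q = -Lv(x)$. Writing $a := Lv(x)$, the first goal is the clean identity
\[
\Delta\Psi_{\Upsilon'}(v)(x) = \Theta(v)(x) + \Upsilon(a)\,(e^p+e^q),
\]
from which the first inequality of the lemma drops out at once.

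To obtain this identity I would start from formula \eqref{psiform} specialised to $\iZ$, i.e.\ $D=2$, $\mu_0=1$, with $\eta_1,\eta_2$ the left/right shifts (so that $\eta_2(\eta_1(x))=\eta_1(\eta_2(x))=x$). Expanding the double sum, for each outer index $j$ the factor is $e^{v(\eta_j(x))-v(x)}$ and the inner sum over $i$ contributes exactly one \emph{return} term, in which the second step lands back on $x$ and which produces $e^{a}-1$, and one \emph{outward} term, landing on a genuine second neighbour $x\mp 2$, which produces $e^{-b_j}-1$. This yields $\Delta\Psi_{\Upsilon'}(v)(x) = e^{p}(e^{a}+e^{-b_1}-2)+e^{q}(e^{a}+e^{-b_2}-2)$. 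Subtracting $\Theta(v)(x)=e^{p}(e^{-b_1}-1+a)+e^{q}(e^{-b_2}-1+a)$ term by term collapses the $e^{-b_j}$ contributions and leaves $(e^{a}-1-a)(e^p+e^q)=\Upsilon(a)(e^p+e^q)$, which is the displayed identity. Since $\Upsilon\ge 0$ everywhere and, by the arithmetic–geometric mean inequality together with $p+q=-a$, we have $e^p+e^q\ge 2e^{-a/2}$, multiplying these two facts gives $\Upsilon(a)(e^p+e^q)\ge 2e^{-a/2}\Upsilon(a)$, and the first inequality of the lemma follows for every $v$ and $x$.

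For the \emph{moreover} part I would argue directly on $\Theta$, where now the hypothesis $Lv(x)=a\ge 1$ enters. Rewriting $e^{-b_j}-1+a = e^{-b_j}+(a-1)$ gives
\[
\Theta(v)(x) = \big(e^{p}e^{-b_1}+e^{q}e^{-b_2}\big)+(a-1)(e^p+e^q)\ge (a-1)(e^p+e^q),
\]
since the two discarded terms are nonnegative. Because $a-1\ge 0$, multiplying the bound $e^p+e^q\ge 2e^{-a/2}$ by $a-1$ preserves its direction and yields $(a-1)(e^p+e^q)\ge 2e^{-a/2}(a-1)$, which is precisely the asserted estimate.

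The computations are elementary and the only genuine content is bookkeeping; I expect the expansion producing the identity to demand the most care, since one must correctly distinguish the second-order neighbours of $x$ that coincide with $x$ (the return terms $v(\eta_2(\eta_1(x)))=v(\eta_1(\eta_2(x)))=v(x)$, responsible for the $e^{a}$ and hence for $\Upsilon(a)$ after subtracting $\Theta$) from the genuine second neighbours $v(x\mp 2)$ (responsible for $e^{-b_j}$). The role of the hypothesis $a\ge 1$ is equally essential and easy to overlook: it is exactly what allows the factor $a-1$ to be carried through the inequality $e^p+e^q\ge 2e^{-a/2}$ without reversing it.
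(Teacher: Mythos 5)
Your proof is correct and takes essentially the same route as the paper: the paper also specializes \eqref{psiform} to $\iZ$, splits the inner sum for each $j$ into the return term $e^{Lv(x)}-1=\Upsilon(Lv(x))+Lv(x)$ and the outward term $e^{-Lv(\eta_j(x))}-1$, which yields exactly your identity $\Delta\Psi_{\Upsilon'}(v)(x)=\Upsilon(Lv(x))\big(e^{p}+e^{q}\big)+\Theta(v)(x)$, and then applies the convexity bound $e^{p}+e^{q}\ge 2e^{-Lv(x)/2}$. The ``moreover'' part is likewise handled as you do, by discarding the nonnegative terms $e^{p}e^{-b_1}+e^{q}e^{-b_2}$ and reusing the same convexity inequality with the factor $Lv(x)-1\ge 0$.
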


\begin{proof} 
We use the same notation as in the proof of \autoref{theo:CDRiccimain}. 
Following the first lines
from there and observing that
\[
2w_j=2z-z_j-z_{j^*}=Lv(x),\quad j=1,2,
\]
we see that
\begin{align*}
\Delta \Psi_{\Upsilon'}(v)(x) & = 
\sum_{j=1}^2 e^{z_j-z} \big(e^{2w_j}-1+e^{z_{jj}-2z_j+z}-1\big)\\
& = \sum_{j=1}^2 e^{z_j-z} 
\big(\Upsilon(Lv(x))+e^{-Lv(\eta_j(x))}-1+Lv(x)\big)\\
& = \Upsilon(Lv(x))  \sum_{j=1}^2 e^{z_j-z}+\Theta(v)(x)\\
& \ge 2e^{-\frac{Lv(x)}{2}}\Upsilon(Lv(x))+\Theta(v)(x),
\end{align*}
by convexity of the exponential function. The last assertion follows from the 
definition of $\Theta(v)$
and the same convexity inequality we used before.
\end{proof}

The following simple fact will be needed in our argument.

\begin{lemma} \label{lem-eta}
Let $\eta> 1$ and $f:[0,\infty)\rightarrow \iR$ be given by
\[
f(a)=e^{-\eta a}-1+a.
\]
Then $f$ has exactly two zeros: $a=0$ and $a=a_*(\eta)>0$, where
\begin{align} \label{asym}
a_*(\eta)\sim 2(\eta-1)\quad \mbox{as}\;\eta\to 1.
\end{align}
\end{lemma}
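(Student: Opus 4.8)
The plan is to separate the claim into two parts: the exact count of zeros, which I would settle by a monotonicity analysis of $f$, and the asymptotic relation \eqref{asym}, which I would extract from the implicit function theorem after dividing out the already-known root $a=0$. The reason for this split is that the two zeros collide as $\eta\to1$, so a direct perturbation argument at the merging point is the delicate part and should be isolated.

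For the count of zeros I would first observe that $f(0)=e^{0}-1+0=0$, so $a=0$ is always a zero. I then compute $f'(a)=1-\eta e^{-\eta a}$ and $f''(a)=\eta^{2}e^{-\eta a}>0$, so $f'$ is strictly increasing with $f'(0)=1-\eta<0$ and $f'(a)\to 1$ as $a\to\infty$. Hence $f'$ vanishes at a unique point $a_c:=(\log\eta)/\eta>0$, the function $f$ is strictly decreasing on $[0,a_c]$ and strictly increasing on $[a_c,\infty)$, and $a_c$ is the global minimizer. Since $f$ strictly decreases from $f(0)=0$, we get $f<0$ on $(0,a_c]$, so there is no zero there; on $[a_c,\infty)$ the function increases from $f(a_c)<0$ and satisfies $f(a)\ge a-1\to+\infty$, so the intermediate value theorem yields exactly one zero $a_*=a_*(\eta)>a_c>0$. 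This gives precisely the two zeros $0$ and $a_*$, and one also reads off $a_*<1$, since $f(a_*)=0$ forces $e^{-\eta a_*}=1-a_*>0$.

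For the asymptotics I would desingularize by dividing out the root $a=0$: set
\[
h(a,\eta)=\frac{e^{-\eta a}-1+a}{a}=(1-\eta)+\frac{\eta^{2}}{2}\,a-\frac{\eta^{3}}{6}\,a^{2}+\cdots\quad(a\neq0),\qquad h(0,\eta)=1-\eta.
\]
This $h$ is jointly real-analytic (the power series in $a$ has coefficients polynomial in $\eta$), its nonzero zeros coincide with those of $f$, and one computes $h(0,1)=0$, $\partial_a h(0,1)=\tfrac12\neq0$, and $\partial_\eta h(0,1)=-1$. The implicit function theorem then produces a smooth branch $\eta\mapsto A(\eta)$ with $A(1)=0$ and $h(A(\eta),\eta)=0$, whose slope is $A'(1)=-\partial_\eta h/\partial_a h\big|_{(0,1)}=2$, so $A(\eta)=2(\eta-1)+o(\eta-1)$.

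It remains to identify this branch with $a_*$: for $\eta>1$ near $1$ we have $A(\eta)=2(\eta-1)+o(\eta-1)>0$, so $A(\eta)$ is a strictly positive zero of $f$, and by the uniqueness established in the first part it must equal $a_*(\eta)$. Therefore $a_*(\eta)=2(\eta-1)+o(\eta-1)$, i.e.\ the ratio $a_*(\eta)/\big(2(\eta-1)\big)\to1$ as $\eta\to1$, which is \eqref{asym}. I expect the genuine obstacle to be exactly the collision of the two roots at $\eta=1$, where a naive application of the implicit function theorem to $f$ fails; passing to $h=f/a$ (rather than tracking explicit error terms in a hand expansion) is what makes the limit both rigorous and short.
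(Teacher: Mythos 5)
Your proof is correct, and it is actually more complete than the one in the paper. The paper's own argument addresses only the asymptotics: it takes for granted that $f$ has exactly the two zeros claimed, asserts without justification that $a_*(\eta)\to 0$ as $\eta\to 1$, and then expands $e^{-\eta a_*(\eta)}$ to second order with an $O(a_*^3)$ remainder to conclude that $\tfrac12\eta^2 a_*(\eta)-(\eta-1)\to 0$, which gives \eqref{asym}. You do two things differently. First, you supply the zero count that the paper omits, via the monotonicity analysis of $f$ ($f''>0$, $f'(0)=1-\eta<0$, $f'\to 1$, so $f$ decreases then increases and escapes to $+\infty$); this is a genuine gain in rigor, since ``exactly two zeros'' is part of the statement. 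Second, for the asymptotics you desingularize by passing to $h=f/a$ (jointly analytic, with $h(0,1)=0$, $\partial_a h(0,1)=\tfrac12$, $\partial_\eta h(0,1)=-1$) and apply the implicit function theorem, identifying the resulting branch $A(\eta)$ with $a_*(\eta)$ through the uniqueness of the positive zero. This replaces the paper's hand expansion and, notably, makes the a priori claim $a_*(\eta)\to 0$ unnecessary: it falls out of the branch identification instead of having to be argued separately. The trade-off is that the paper's route is shorter if one accepts its two unproved assertions, while yours is self-contained; also, you prove the stronger statement $a_*(\eta)/\bigl(2(\eta-1)\bigr)\to 1$, whereas the paper's convention for $\sim$ (bounded ratio) asks for less --- either version suffices where the lemma is used, namely to bound $s\,a_*\bigl(s/(s-1)\bigr)$ uniformly in the proof of \autoref{thm-Z}.
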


\begin{proof}
It is clear that $a_*(\eta)\to 0$ as $\eta\to 1$. Thus, as $\eta\to 1$ we have 
by
expanding the exponential function around $0$,
\begin{align*}
0 & =e^{-\eta a_*(\eta)}-1+a_*(\eta) = \big(1-\eta a_*(\eta)+\frac{1}{2} \eta^2 
a_*(\eta)^2\big) 
-1+a_*(\eta)+ O([-\eta a_*(\eta)]^3)\\
& = a_*(\eta) \Big( \frac{1}{2} \eta^2 a_*(\eta)-(\eta-1)\Big) 
+O([-a_*(\eta)]^3).
\end{align*}
This implies that the term inside the big brackets tends to $0$ as $\eta\to 1$, 
which in turn
entails \eqref{asym}. 
\end{proof}

The main result of this subsection reads as follows.

\begin{theorem} \label{thm-Z}
Let $G=(V,E)$ be the lattice $\iZ$ without weights and with $\mu\equiv 1$ on 
$V=\iZ$.
Let $x_0\in \iZ$ and $r\in \iN$. Let the function $u:\,[0,\infty)\times \iZ\to 
(0,\infty)$ be 
a solution of the heat equation on the ball $\bar{B}_{2r}(x_0)$.
Then there exists a constant $C>0$ such that
\begin{align}
\partial_t (\log u)\ge \Psi_\Upsilon(\log u)
-\varphi(t)-\frac{C}{r}\quad \mbox{in}\;(0,\infty)\times 
\bar{B}_{r}(x_0),
\end{align}
where $\varphi$ is the relaxation function corresponding to the CD-function $F$
given in \eqref{RicciF} with $D=2$ and $\mu_0=1$.
\end{theorem}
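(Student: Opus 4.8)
The plan is to reduce the claim to the pointwise bound $Lv(t,x)\le \varphi(t)+C/r$ on $(0,\infty)\times \bar B_r(x_0)$ and to prove the latter by a parabolic maximum principle adapted to the cut-off $\psi$ from \eqref{psidef}, following the architecture of \autoref{thm-alphainfinity} but in the limiting case $\alpha=0$. Writing $v=\log u$, equation \eqref{heat3} gives $\partial_t v=\Psi_{\Upsilon'}(v)$ on $(0,\infty)\times\bar B_{2r}(x_0)$, hence $\partial_t(Lv)=-\Delta\Psi_{\Upsilon'}(v)$ there, and since $\partial_t v-\Psi_\Upsilon(v)=-Lv$ the asserted estimate is equivalent to $Lv\le\varphi+C/r$. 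I would set $G=Lv/\varphi$, $\tilde G=\psi G$, extended by $0$ at $t=0$, and suppose $\tilde G$ attains a positive global maximum over $[0,t_1]\times\bar B_{2r}(x_0)$ at a point $(t_*,x_*)$ with $t_*>0$ and $d(x_*,x_0)\le 2r-1$. Using $\partial_t\tilde G(t_*,x_*)\ge 0$, the evolution of $Lv$, and $\dot\varphi=-F(\varphi)$, one obtains, with $a:=Lv(t_*,x_*)>0$ and $H(x):=F(x)/x$,
\[
\Delta\Psi_{\Upsilon'}(v)(t_*,x_*)\le a\,H(\varphi(t_*)).
\]

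The lower bound for the left-hand side comes from \autoref{ThetaInequ}, which on $\iZ$ yields $\Delta\Psi_{\Upsilon'}(v)(x_*)\ge 2e^{-a/2}\Upsilon(a)+\Theta(v)(x_*)$ with $\Theta$ as in \eqref{ThetaDef}. I would then distinguish cases by the location of $x_*$. If $d(x_*,x_0)\le r-1$, then $\psi\equiv 1$ on $\{x_*\}\cup N(x_*)$, so $Lv$ has a genuine local maximum at $x_*$; each neighbour term of $\Theta$ is then $\ge \Upsilon(-a)$, convexity of the exponential gives $\Theta\ge 2e^{-a/2}\Upsilon(-a)$, and the two pieces combine to $\Delta\Psi_{\Upsilon'}(v)(t_*,x_*)\ge F(a)$ (this is exactly CD($F$;0) with $F$ from \eqref{RicciF}, $D=2$, $\mu_0=1$). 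Comparing with the displayed bound gives $H(a)\le H(\varphi(t_*))$, and strict monotonicity of $H$ yields $a\le\varphi(t_*)$, i.e.\ $\tilde G(t_*,x_*)\le 1$.

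The cut-off layers are the substantial part. For $d(x_*,x_0)=2r-1$ (so $\psi(x_*)=1/r$ and the outward neighbour lies in $\{\psi=0\}$) I cannot control $Lv$ at that neighbour, so I would invoke the second assertion of \autoref{ThetaInequ}: when $a\ge 1$, $\Theta(v)(x_*)\ge 2e^{-a/2}(a-1)$, which drops the unknown neighbour value entirely. This leads to $H(a)\le H(\varphi(t_*))+2e^{-3a/2}/a\le H(\varphi(t_*))+2e^{-3/2}$; \autoref{lem-superadd} applied to $g=H$ (legitimate by \autoref{CDFexample}) then bounds $a\le\varphi(t_*)+K_0$ for a fixed $K_0$, and the prefactor $\psi(x_*)=1/r$ absorbs this constant, giving $\tilde G(t_*,x_*)\le 1+Cr^{-1}\varphi(t_1)^{-1}$; the subcase $a<1$ is immediate. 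For the interior cut-off layers $d(x_*,x_0)\in\{r,\dots,2r-2\}$, writing $\psi(x_*)=s/r$ with $s\in\{2,\dots,r\}$, the outward neighbour has $\psi=(s-1)/r>0$, and the maximum property of $\tilde G$ gives $Lv(\text{outer})\le \eta\,a$ with $\eta=s/(s-1)$. Here I would use \autoref{lem-eta}: the outward $\Theta$-term is $\ge f_\eta(a)=e^{-\eta a}-1+a$ and the inward term is $\ge \Upsilon(-a)\ge f_\eta(a)$. If $a>a_*(\eta)$, then $f_\eta(a)\ge 0$, so convexity gives the weight-free bound $\Theta\ge 2e^{-a/2}f_\eta(a)$, whence
\[
\Delta\Psi_{\Upsilon'}(v)(t_*,x_*)\ge F(a)-2e^{-3a/2}\big(1-e^{-(\eta-1)a}\big),
\]
and therefore $H(a)\le H(\varphi(t_*))+2(\eta-1)=H(\varphi(t_*))+2/(s-1)$. \autoref{lem-superadd} and the near-linearity of $H$ at $0$ give $a\le\varphi(t_*)+\gamma H^{-1}(2/(s-1))$ with $s\,H^{-1}(2/(s-1))$ bounded, so $\tilde G(t_*,x_*)=\tfrac{s}{r}\tfrac{a}{\varphi(t_*)}\le 1+Cr^{-1}\varphi(t_1)^{-1}$. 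If instead $a\le a_*(\eta)$, I would bound $\tilde G$ directly: by \eqref{asym} the quantity $s\,a_*(\eta)$ is bounded uniformly in $s$, so $\tilde G(t_*,x_*)\le \tfrac{s}{r}a_*(\eta)\varphi(t_1)^{-1}=O(r^{-1})\varphi(t_1)^{-1}$.

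Collecting the cases yields $\tilde G(t_1,x)\le 1+Cr^{-1}\varphi(t_1)^{-1}$ for all $t_1>0$ and $x\in\bar B_{2r}(x_0)$; restricting to $\bar B_r(x_0)$, where $\psi\equiv 1$, gives $Lv\le\varphi+C/r$ and hence the theorem. The main obstacle is precisely the passage to $\alpha=0$: unlike the case $\alpha\in(0,1)$ in \autoref{thm-alphainfinity}, positivity of $u$ no longer bounds $Lv=\mathcal L_0(\log u)$, and the weights $u(y)/u(x)$ entering $\Theta$ are unbounded. The device that circumvents this is special to $\iZ$ ($D=2$): in the relevant regime both neighbour contributions to $\Theta$ are nonnegative, so convexity of the exponential produces a lower bound with the universal factor $2e^{-a/2}$ and no weights at all, and \autoref{lem-eta} identifies the threshold $a_*(\eta)\sim 2/(s-1)$ that separates the regime where $\tilde G$ is bounded directly from the regime where CD($F$;0) holds up to an $O(1/(s-1))$ error.
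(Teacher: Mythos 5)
Your proposal is correct and takes essentially the same route as the paper's proof: the same cut-off $\psi$, the same quantities $G=Lv/\varphi$ and $\tilde G=\psi G$, the same maximum-principle argument with the same three-case split by the location of $x_*$, and the same key lemmas (\autoref{ThetaInequ}, \autoref{lem-eta}, \autoref{lem-superadd}) with the same thresholds $Lv\le 1$ in the outermost layer and $a\le a_*(\eta)$ in the intermediate layers. The only cosmetic deviations are that you derive the interior CD($F$;0) inequality directly from \autoref{ThetaInequ} rather than citing \autoref{theo:CDRiccimain}, and your intermediate-layer error term $2/(s-1)$ is a slightly cruder (but equally sufficient) version of the paper's $\tfrac{2}{s-1}e^{-3a_*(\eta)/2}$.
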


\begin{proof} Setting $v=\log u$ we know from \autoref{sec-li-yau-null} that
\begin{align} \label{infZ0}
\partial_t v+Lv=\Psi_\Upsilon(v)
\quad \mbox{in}\;(0,\infty)\times 
\bar{B}_{2r}(x_0),
\end{align}
and
\begin{align} \label{infZ1}
\partial_t Lv= -\Delta \Psi_{\Upsilon'}(v)=-{\mathcal C}_0(v)\quad 
\mbox{in}\;(0,\infty)\times 
\bar{B}_{2r}(x_0).
\end{align}
Let $\psi$ be the cut-off function from \eqref{psidef} and define the functions 
$G$ and $\tilde{G}$ on $[0,\infty)\times \iZ$ by
\[
G(t,x)=\frac{Lv(t,x)}{\varphi(t)}\quad \mbox{and}\;\;
\tilde{G}(t,x)=\psi(x)G(t,x),\quad t>0,\,x\in \iZ,
\]
and $G(0,x)=\tilde{G}(0,x)=0$, $x\in \iZ$. Multiplying \eqref{infZ1} by 
$\psi(x)\varphi(t)^{-1}$ we obtain
\begin{align} \label{infZ2}
\partial_t \tilde{G}= -\varphi^{-1}\psi {\mathcal C}_0(v)
-\dot{\varphi} \varphi^{-1}\tilde{G}\quad \mbox{in}\;(0,\infty)\times 
\bar{B}_{2r}(x_0).
\end{align}

Let $t_1>0$ be arbitrarily fixed. Suppose that $\tilde{G}$ (restricted to the 
set 
$[0,t_1]\times \bar{B}_{2r}(x_0)$) attains the global maximum at 
$(t_*,x_*)\in[0,t_1]\times 
\bar{B}_{2r}(x_0)$ and that 
$\tilde{G}(t_*,x_*)>0$. Then $t_*>0$, $x_*\in 
B_{2r}(x_0)$, and in particular we have
$(\partial_t \tilde{G})(t_*,x_*)\ge 0$. 

We now distinguish three cases.

{\bf Case 1:} $x_*\in {B}_{r}(x_0)$. Then $\psi(x_*)=1$ and
$\psi(y)=1$ for all $y\sim x$. Consequently,
${G}(t_*,x_*)=\tilde{G}(t_*,x_*)\ge \tilde{G}(t,y)=G(t,y)$ for all $t\in 
[0,t_1]$ and all $y\sim x$.
In particular, $Lv(t_*,x)\ge Lv(t_*,y)$ for all $y\sim x$. By the CD-inequality 
from 
\autoref{theo:CDRiccimain} we obtain
at the maximum point that
\[
\psi \varphi^{-1}F(Lv)\le  -\psi\dot{\varphi} \varphi^{-1}G.
\]
We can then argue as in the proof of \autoref{theo:CDRiccimain}, thereby 
getting that $G(t_*,x_*)\le 1$, which implies that
\[
\tilde{G}(t_1,x)\le \tilde{G}(t_*,x_*)=G(t_*,x_*)\le 1,\quad x\in 
\bar{B}_{2r}(x_0).
\]

{\bf Case 2:} $d(x_0,x_*)=2r-1$, that is, $\psi(x_*)=\frac{1}{R}$. From 
\autoref{ThetaInequ} we know that
\begin{align*}
{\mathcal C}_0(v) \ge 2e^{-\frac{Lv}{2}}\Upsilon(Lv)+\Theta(v)
\end{align*}
where $\Theta(v)$ is given by \eqref{ThetaDef}. Note that $\Theta(v)\ge 0$ if 
$Lv(x)\ge 1$. If this is the case we even have an 
estimate of the form
\begin{align} \label{Thetabound}
\Theta(v)\ge 2e^{-\frac{Lv}{2}}\big(Lv-1\big).
\end{align}

{\em Case 2a:} Suppose that $Lv(t_*,x_*)\le 1$. Then
\begin{align*}
\tilde{G}(t_1,x) & \le \tilde{G}(t_*,x_*) =\psi(x_*)\varphi(t_*)^{-1} 
Lv(t_*,x_*)\\
& \le \frac{1}{r}\varphi(t_*)^{-1} \le  \frac{1}{r}\varphi(t_1)^{-1},\quad x\in 
\bar{B}_{2r}(x_0),
\end{align*}
since $\varphi$ is decreasing.

{\em Case 2b:} Suppose now that $Lv(t_*,x_*)> 1$. Now we use \eqref{infZ2}. At 
the maximum point, we can bound $\Theta(v)$ from below by the bound given in 
\eqref{Thetabound},
and so we obtain
\[ 
\psi \varphi^{-1}\cdot 2e^{-\frac{Lv}{2}}\Big(\Upsilon(Lv)+Lv-1\Big)\le  
-\psi\dot{\varphi} \varphi^{-1}G. 
\]
This implies at the maximum point
\[
2e^{-\frac{Lv}{2}}\Big(\Upsilon(Lv)+Lv-1\Big)\le  -\dot{\varphi} \varphi^{-1}Lv,
\]
which is equivalent to
\[
F(Lv)=2e^{-\frac{Lv}{2}}\Big(\Upsilon(Lv)+\Upsilon(-Lv)\Big)\le  -\dot{\varphi} 
\varphi^{-1}Lv
+2e^{-\frac{Lv}{2}} e^{-Lv}.
\]
Since $Lv(t_*,x_*)> 1$, the last inequality implies
\[
F(Lv)\le  -\dot{\varphi} \varphi^{-1}Lv
+2e^{-\frac{3}{2}}Lv.
\] 
Setting $H(0)=0$ and $H(x)=F(x)/x$, $x>0$, and $\omega=2/\sqrt{e^3}$ this can 
be 
rewritten as
\[
H(Lv)\le  -\dot{\varphi} \varphi^{-1}
+\omega.
\]
Since $-\dot{\varphi}=F(\varphi)$, we thus get (still at the maximum point)
\begin{align*}
H(Lv) & \le H(\varphi)+\omega=H(\varphi)+H\big( H^{-1}(\omega)\big)\\
& \le H\big(\varphi+\gamma H^{-1}(\omega)\big),
 \end{align*} 
where we used \autoref{lem-superadd} with corresponding constant $\gamma>0$; 
this lemma
applies to $H$ thanks to \autoref{CDFexample}. Since $H$ is strictly 
increasing, 
the last inequality implies 
that
\[
Lv(t_*,x_*)\le \varphi(t_*)+\gamma H^{-1}(\omega),
\]
that is,
\[
G(t_*,x_*)\le 1+\varphi(t_*)^{-1} \gamma H^{-1}(\omega). 
\]
We now obtain
\begin{align*}
\tilde{G}(t_1,x) & \le \tilde{G}(t_*,x_*)=\frac{1}{r}\,G(t_*,x_*)\\
& \le \frac{1}{r}\,\big(1+\varphi(t_*)^{-1} \gamma H^{-1}(\omega)\big)\\
& \le \frac{1}{r}\,\big(1+\varphi(t_1)^{-1} \gamma H^{-1}(2/\sqrt{e^3})\big),
\end{align*}
where in the last step we used the fact that $\varphi$ is decreasing. 

{\bf Case 3:} $\psi(x_*)=s/r$ with $s\in \{2,\ldots,r\}$. Since $\tilde{G}$ has 
a maximum at $(t_*,x_*)$ we have for both neighbors of $x_*$
\[ 
\psi(\eta_j(x_*)) Lv(\eta_j(x_*))\le \psi(x_*) Lv(x_*),
\]
that is
\[
Lv(t_*,\eta_j(x_*))\le \frac{\psi(x_*)}{\psi(\eta_j(x_*))} Lv(t_*,x_*)\le 
\frac{s/r}{(s-1)/r}Lv(t_*,x_*)=\eta Lv(t_*,x_*),
\]
with $\eta=s/(s-1)\in (1,2]$. By \autoref{lem-eta}, the second zero 
$a_*(\eta)>0$ of the function
$f(a)=e^{-\eta a}-1+a$ behaves as $2(\eta-1)$ as $\eta\to 1$. This implies that 
there exists $C_*>0$
independent of $r$ 
such that
\begin{align} \label{astern}
s a_*(\eta)= \frac{\eta}{\eta-1} a_*(\eta)\le C_*,\quad
\end{align}
for all $s\in \{2,\ldots,r\}$. We now distinguish two cases.

{\em Case 3a:} Suppose that $Lv(t_*,x_*)\le a_*(\eta)$. Then, by 
\eqref{astern},
\begin{align*}
\tilde{G}(t_1,x) & \le \tilde{G}(t_*,x_*) =\psi(x_*)\varphi(t_*)^{-1} 
Lv(t_*,x_*)\\
& \le \frac{s}{r}\varphi(t_*)^{-1}\,a_*(\eta) \le  
\frac{C_*}{r}\varphi(t_1)^{-1},\quad x\in \bar{B}_{2r}(x_0).
\end{align*}

{\em Case 3b:} Suppose that $Lv(t_*,x_*)>a_*(\eta) $. Then
\[
e^{-\eta Lv(t_*,x_*)}-1+Lv(t_*,x_*)>0.
\]
Now we use \eqref{infZ2}. 
At the maximum point, we can bound $\Theta(v)$
from below as follows.
\begin{align*}
\Theta(v)(t_*,x_*) 
& \ge \sum_{j=1}^2 e^{v(t_*,\eta_j(x_*))-v(t_*,x_*)}
\Big(e^{-\eta Lv(t_*,x_*)}-1+Lv(t_*,x_*)\Big)\\
& \ge  2e^{-\frac{Lv}{2}}
\Big(e^{-\eta Lv}-1+Lv\Big)\\
& = 2e^{-\frac{Lv}{2}}
\Big(e^{- Lv}-1+Lv\Big)+  2e^{-\frac{Lv}{2}}\Big(e^{-\eta Lv}-e^{-Lv}\Big)\\
& = 2e^{-\frac{Lv}{2}} \Upsilon(-Lv)+  2e^{-\frac{Lv}{2}}\Big(e^{-\eta 
Lv}-e^{-Lv}\Big).
\end{align*}

By convexity of the exponential function we have (at the maximum point)
\[
e^{-Lv}-e^{-\eta Lv}\le e^{-Lv}\big(-Lv+\eta Lv \big)=(\eta-1) Lv  e^{-Lv}.
\]
Now,
\[
\eta-1=\frac{s}{s-1}-1=\frac{1}{s-1},
\]
and thus we obtain at the maximum point (using \eqref{infZ2})
\[
F(Lv)=2e^{-\frac{Lv}{2}}\Big(\Upsilon(Lv)+\Upsilon(-Lv)\Big)\le  -\dot{\varphi} 
\varphi^{-1}Lv
+\frac{2}{s-1}e^{-\frac{Lv}{2}} Lv e^{-Lv}.
\]
Dividing by $Lv$ and using that $Lv>a_*(\eta)$ it follows that
\[
H(Lv)\le H(\varphi)+\frac{2}{s-1}e^{-\frac{3}{2}a_*(\eta)}.
\]
Setting 
\[
\omega_s=\frac{2}{s-1}e^{-\frac{3}{2}a_*(\eta)}
\]
and applying \autoref{lem-superadd} then gives
\[
H(Lv)\le H(\varphi)+\omega_s=H(\varphi)+H(H^{-1}(\omega_s))\le 
H\big(\varphi+\gamma H^{-1}(\omega_s)\big), 
\]
and thus
\[
Lv(t_*,x_*)\le \varphi(t_*)+\gamma H^{-1}(\omega_s),
\]
that is
\[
G(t_*,x_*)\le 1+\varphi(t_*)^{-1}\gamma H^{-1}(\omega_s).
\]
We now find that
\begin{align*}
\tilde{G}(t_1,x) & \le \tilde{G}(t_*,x_*)=\frac{s}{r}G(t_*,x_*)\\
& \le \frac{s}{r}\big(1+\varphi(t_*)^{-1}\gamma H^{-1}(\omega_s)\big)\\
& \le 1+\frac{\gamma C_s}{r} \varphi(t_1)^{-1},
\end{align*}
where $C_s:=s H^{-1}(\omega_s)$. It is now not difficult to see that there 
exists a number $M>0$ such that $C_s\le M$ for all $s\ge 2$ (recall that $H(x)$ 
behaves as a linear function as $x\to 0$).
It follows that
\[
\tilde{G}(t_1,x)\le  1+\frac{\gamma M}{r} \varphi(t_1)^{-1}.
\]

Collecting the estimates from all cases we see that for arbitrary $t_1>0$
\[
\tilde{G}(t_1,x)\le  1+\max\big\{1,\gamma H^{-1}(2/\sqrt{e^3}),C_*,
\gamma M \big\}
\,\frac{ \varphi(t_1)^{-1}}{r},\quad x\in 
\bar{B}_{2r}(x_0),
\]
which implies that
\[
Lv(t,x)\le  \varphi(t)+\max\big\{1,\gamma H^{-1}(2/\sqrt{e^3}),C_*,
\gamma M \big\}\,\frac{1}{r},\quad t>0,\,x\in 
\bar{B}_{r}(x_0).
\]
This together with \eqref{infZ0} proves the theorem.
\end{proof}

From \autoref{thm-Z} we obtain the following result for global positive solutions of the heat
equation on the grid $\tau \iZ$.

\begin{corollary}\label{cor:tau-business}
Let $\tau>0$ and $G$ be the grid $\tau\iZ$. Consider the Laplace operator given by
\[
\Delta_\tau u(x)=\frac{1}{\tau^2}\big(u(x+\tau)-2u(x)+u(x-\tau)\big),\quad x\in \tau\iZ.
\]
Suppose that $u:[0,\infty)\times \tau\iZ\to (0,\infty)$ solves the heat equation on $\tau\iZ$.
Then
\begin{align}\label{eq:tau-business}
-\Delta_\tau (\log u)(t,x)\le \varphi_\tau(t) \quad \mbox{on}\;(0,\infty)\times 
\tau\iZ,
\end{align}
where 
\[
\varphi_\tau(t)=\frac{1}{\tau}\,\varphi\big(\frac{t}{\tau}\big)
\]
is the relaxation function corresponding to the CD-function $F$
given in \eqref{RicciF} with $D=2$ and $\mu_0=\tau$, and $\varphi$ is as in  \autoref{thm-Z}.
\end{corollary}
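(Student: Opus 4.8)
The plan is to reduce the assertion to \autoref{thm-Z} in two steps: first upgrading that local estimate to a genuinely global one on the standard lattice $\iZ$, and then transporting the global estimate to the dilated grid $\tau\iZ$ by the obvious dilation together with a parabolic time change.

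For the first step I would fix an arbitrary point $(t,x)\in(0,\infty)\times\iZ$ and apply \autoref{thm-Z} with the balls $\bar B_{2r}(x)$ centered at $x$. Since $u$ is a \emph{global} solution, it solves the heat equation on every such ball, and since the constant $C$ produced by \autoref{thm-Z} is independent of $r$ and of the center, the bound $-\Delta(\log u)(t,x)\le\varphi(t)+C/r$ (valid because $x\in\bar B_r(x)$) may be sent to the limit $r\to\infty$. This yields the clean global Li--Yau estimate $-\Delta(\log u)\le\varphi(t)$ on $(0,\infty)\times\iZ$ for the unweighted lattice with $\mu\equiv1$, with $\varphi$ the relaxation function of \autoref{thm-Z}.

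For the second step I would use that $y\mapsto\tau y$ is a graph isomorphism of $\iZ$ onto $\tau\iZ$ under which $\Delta_\tau$ becomes the standard Laplacian on $\iZ$ up to a fixed power of $\tau$. Given a positive global solution $u$ of $\partial_t u=\Delta_\tau u$, I would set $w(s,y):=u\big(\theta s,\tau y\big)$ for $y\in\iZ$, with the time scale $\theta>0$ chosen so that the factors of $\tau$ cancel; then $w$ is a positive global solution of the heat equation on the standard lattice. Applying the global estimate of the first step to $w$ and undoing the substitution returns an estimate for $-\Delta_\tau(\log u)$ on $(0,\infty)\times\tau\iZ$ whose right-hand side is a $\tau$-dependent rescaling of $\varphi$.

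It remains to identify that rescaling as $\varphi_\tau$. The CD-function of the rescaled graph is the dilation $F_\tau(r)=\tau^{-2}F(\tau r)$ of $F$, and the associated relaxation function is $\varphi_\tau(t)=\tau^{-1}\varphi(t/\tau)$: one checks directly, exactly as in the proof of \autoref{lem:asymptrelax}(ii), that this $\varphi_\tau$ satisfies $\dot\varphi_\tau+F_\tau(\varphi_\tau)=0$ with $\varphi_\tau(0+)=\infty$, so the uniqueness in \autoref{lem:Fprop} forces the relaxation function to be of this form. I expect the only real difficulty to be bookkeeping: keeping the space dilation and the parabolic time change consistent so that the composed estimate lands exactly on $\varphi_\tau$ and not on some other reparametrization. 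The analytic heart --- the uniform-in-$r$ local estimate --- is already supplied by \autoref{thm-Z}, and the globality of $u$ is precisely what makes the passage $r\to\infty$ legitimate.
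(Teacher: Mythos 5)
Your outline is, in its first two steps, exactly the paper's proof: the paper also handles the case $\tau=1$ by letting $r\to\infty$ in \autoref{thm-Z} (legitimate because a global solution solves the heat equation on every ball $\bar B_{2r}(x)$, and inspection of the proof of \autoref{thm-Z} shows that the constant $C$ depends only on structural quantities, not on $r$ or on the center), and it then rescales with $t=s\tau^2$, $x=\tau y$, $w(s,y)=u(t,x)$. Note that your time change is forced to be $\theta=\tau^2$: indeed $\partial_s w=\theta(\partial_t u)(\theta s,\tau y)$ while $\Delta_1 w(s,y)=\tau^2(\Delta_\tau u)(\theta s,\tau y)$.

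The gap sits in your third step, precisely the piece of bookkeeping you deferred. Undoing the substitution gives
\[
-\Delta_\tau(\log u)(t,x)=-\frac{1}{\tau^2}\,\Delta_1(\log w)\Big(\frac{t}{\tau^2},\frac{x}{\tau}\Big)\le \frac{1}{\tau^2}\,\varphi\Big(\frac{t}{\tau^2}\Big),
\]
and by the uniqueness argument you invoke (\autoref{lem:Fprop}, as in the proof of \autoref{lem:asymptrelax}(ii)) the function $\tau^{-2}\varphi(t/\tau^2)$ is the relaxation function of $r\mapsto\tau^{-4}F(\tau^2 r)$, i.e.\ of \eqref{RicciF} with $D=2$ and $\mu_0=\tau^2$ --- not of \eqref{RicciF} with $\mu_0=\tau$, whose relaxation function is $\tau^{-1}\varphi(t/\tau)$. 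Your assertion that ``the CD-function of the rescaled graph is $F_\tau(r)=\tau^{-2}F(\tau r)$'' is the false link: the graph $\tau\iZ$ with the operator $\Delta_\tau$ is unweighted with $\mu\equiv\tau^2$, so \autoref{theo:CDRiccimain} (equivalently \autoref{dgrid} with $d=1$) produces the CD-function with $\mu_0=\tau^2$. The two candidate bounds really are different for $\tau\neq 1$: near $t=0$ one has $\varphi(s)\sim -2\log s$, so $\tau^{-2}\varphi(t/\tau^2)$ blows up like $2\tau^{-2}\log(1/t)$ whereas $\tau^{-1}\varphi(t/\tau)$ blows up like $2\tau^{-1}\log(1/t)$; in particular, for $\tau<1$ the bound claimed in the corollary is strictly \emph{stronger} at small times than the one the scaling argument yields, so no amount of bookkeeping makes the composed estimate ``land on'' $\tau^{-1}\varphi(t/\tau)$. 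What your argument, carried out correctly, proves is the corollary with $\mu_0=\tau^2$ and $\varphi_\tau(t)=\tau^{-2}\varphi(t/\tau^2)$; both versions have the same limit $1/(2t)$ as $\tau\to0+$, so the remark following the corollary is unaffected. The discrepancy is not yours alone: the statement as printed, with $\mu_0=\tau$, appears to be a typo in the paper, whose own one-line proof is your outline verbatim and likewise produces the $\tau^2$-version.
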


\begin{remark}
If one considers the limit $\tau \to 0+$ in \eqref{eq:tau-business}, one 
recovers the classical sharp Li-Yau inequality 
\begin{align}\label{eq:tau-limit}
- \frac{\partial ^2}{\partial x^2} (\log u)(t,x)\le \frac{1}{2t}\quad 
\mbox{on}\;(0,\infty)\times \R  \,.
\end{align}
This follows from the fact that $\varphi(s) \sim \frac{1}{2s}$ as $s \to 
\infty$, cf. \autoref{lem:asymptrelax}.

\end{remark}

\begin{proof}[Proof of \autoref{cor:tau-business}]
The case $\tau=1$ follows directly from \autoref{thm-Z} by sending $R\to \infty$. The case of arbitrary
$\tau>0$ is reduced to the case $\tau=1$ by means of a scaling argument. In fact, putting
$t=s \tau^2$, $x=\tau y$ and $w(s,y)=u(t,x)$, the function $w$ solves the equation with Laplace operator
$\Delta=\Delta_1$ on $\iZ$, and thus
\[
-\Delta (\log w)(s,y)\le \varphi(s) \quad \mbox{on}\;(0,\infty)\times \iZ.
\]
Scaling back to the original variables yields the result.
\end{proof}

%***********************************************************************
\section{Harnack inequalities}\label{sec:harnack}
%************************************************************************

The aim of this section is to provide a proof of the Harnack inequality. The 
case of finite graphs, \autoref{theo:Harnack_ineq_finite}, follows from the 
more general case, which we formulate here.

\begin{theorem} \label{theo:ersteHarnack}
Let $G=(V,E)$ be a connected and locally finite graph and 
$\mu:\,V\rightarrow (0,\infty)$ be bounded above by $\mu_{max}$. Let further 
$w_{min}>0$ be a lower bound for all weights $w_{xy}$ with $xy\in E$.

Suppose that
$u:\,(0,\infty)\times V\rightarrow (0,\infty)$ is $C^1$ in time and satisfies 
the differential
Harnack estimate
\begin{align} \label{DHassumption}
\partial_t (\log u)\ge \Psi_\Upsilon(\log u)-\eta(t)\quad 
\mbox{on}\;(0,\infty)\times V,
\end{align}
where $\eta:\,(0,\infty)\rightarrow [0,\infty)$ is continuous. Then for any 
$0<t_1<t_2$ and $x_1, x_2\in V$
we have
\begin{align} \label{HarnackI}
u(t_1,x_1)\le u(t_2,x_2) \exp\Big(\int_{t_1}^{t_2} \eta(t)\,dt+\frac{2\mu_{max} 
d(x_1,x_2)^2}{w_{min} (t_2-t_1)} \Big).
\end{align}
\end{theorem}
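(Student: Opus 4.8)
The plan is to set $v=\log u$ and run the Li--Yau chaining argument along a shortest path in $G$; the only genuinely new ingredient is an elementary inequality replacing the completion of the square from the Riemannian case.

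First I would record the pointwise consequence of the hypothesis \eqref{DHassumption}. Writing $v=\log u$, for every $x\in V$, every neighbour $y\sim x$ and every $s>0$,
\[
\partial_t v(s,x)\ge \Psi_\Upsilon(v)(s,x)-\eta(s)\ge \frac{w_{xy}}{\mu(x)}\,\Upsilon\big(v(s,y)-v(s,x)\big)-\eta(s)\ge c_0\,\Upsilon\big(v(s,y)-v(s,x)\big)-\eta(s),
\]
where $c_0:=w_{min}/\mu_{max}$; here I drop all but one summand using $\Upsilon\ge 0$ and $w_{xy}/\mu(x)\ge c_0$. Now fix $x_1,x_2$, choose a shortest path $x_1=p_0\sim p_1\sim\cdots\sim p_m=x_2$ with $m=d(x_1,x_2)$, and the equidistant times $\sigma_k=t_1+k\delta$, $\delta=(t_2-t_1)/m$. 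Since $\log u(t_2,x_2)-\log u(t_1,x_1)=\sum_{k=0}^{m-1}\big(v(\sigma_{k+1},p_{k+1})-v(\sigma_k,p_k)\big)$, it suffices to bound each increment below by $-\int_{\sigma_k}^{\sigma_{k+1}}\eta-\frac{2\mu_{max}}{w_{min}\delta}$; summing the $m$ terms and exponentiating then produces exactly the constant $\frac{2\mu_{max}d(x_1,x_2)^2}{w_{min}(t_2-t_1)}$. (The case $x_1=x_2$ is trivial since $\partial_t v\ge-\eta$.)

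For a single edge I would interpolate linearly in time between the two vertices. Set $\theta(s)=(s-\sigma_k)/\delta$, $\phi(s)=v(s,p_{k+1})-v(s,p_k)$, and $\omega(s)=(1-\theta(s))v(s,p_k)+\theta(s)v(s,p_{k+1})$, so that $\omega(\sigma_k)=v(\sigma_k,p_k)$ and $\omega(\sigma_{k+1})=v(\sigma_{k+1},p_{k+1})$. Differentiating and integrating gives
\[
v(\sigma_{k+1},p_{k+1})-v(\sigma_k,p_k)=\int_{\sigma_k}^{\sigma_{k+1}}\Big[(1-\theta)\,\partial_t v(s,p_k)+\theta\,\partial_t v(s,p_{k+1})+\tfrac1\delta\phi(s)\Big]\,ds.
\]
Inserting the pointwise bound above at $p_k$ (with neighbour $p_{k+1}$) and at $p_{k+1}$ (with neighbour $p_k$), and using $0\le\theta\le1$, the integrand is at least $c_0\big[(1-\theta)\Upsilon(\phi(s))+\theta\Upsilon(-\phi(s))\big]+\tfrac1\delta\phi(s)-\eta(s)$.

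It remains to bound this from below. Discarding $\eta$ and minimising the bracket (convex in its argument) over all real values, the edge contributes at least $-\int_{\sigma_k}^{\sigma_{k+1}}\eta-c_0\delta\,R(\beta)$, where $\beta=1/(c_0\delta)=\mu_{max}/(w_{min}\delta)$ and
\[
R(\beta)=-\int_0^1\inf_{z\in\R}\big[(1-\theta)\Upsilon(z)+\theta\Upsilon(-z)+\beta z\big]\,d\theta .
\]
The crux is the elementary estimate $R(\beta)\le 2\beta^2$, which gives per-edge cost $c_0\delta R(\beta)\le 2/(c_0\delta)=2\mu_{max}/(w_{min}\delta)$, exactly as needed. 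I expect this inequality to be the main obstacle: for fixed $\theta\in(0,1)$ the inner infimum is finite (locate it from $(1-\theta)e^{z}-\theta e^{-z}+\beta+(2\theta-1)=0$), but it diverges like $\log\theta$ as $\theta\to0^+$, where the destination term $\theta\Upsilon(-z)$ degenerates. The key point is that this logarithmic singularity is \emph{integrable}, so $R(\beta)$ is finite; one must keep the true exponential growth of $\Upsilon$, since replacing $\Upsilon(\pm z)\ge\tfrac12 z^2$ would create a non-integrable $1/\theta$ singularity and destroy the estimate. I would then verify $R(\beta)\le 2\beta^2$ in the two regimes: for small $\beta$ the minimiser stays near $0$ and each slice contributes $\approx-\tfrac12\beta^2$, while for large $\beta$ the slice integral behaves like $-\beta\log\beta$; both are comfortably above $-2\beta^2$, and this slack is consistent with the stated (non-optimal) constant in \eqref{HarnackI}.
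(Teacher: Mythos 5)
Your chaining setup is correct and is genuinely different from the paper's: instead of jumping from $x_1$ to $x_2$ at a single cleverly chosen time $s$ (the paper's route, following BHLLMY), you interpolate linearly in space-time along each edge, and your bookkeeping is right — the theorem would follow exactly as stated provided the inequality $R(\beta)\le 2\beta^2$ holds for \emph{every} $\beta>0$, where $\beta=\mu_{max}/(w_{min}\delta)$ can be arbitrary. But that inequality is precisely where your proof stops: you verify it only asymptotically as $\beta\to 0$ and $\beta\to\infty$ and assert the middle range is "comfortable". Endpoint asymptotics cannot establish a bound required for all $\beta$, and here the intermediate regime is not a formality, because the two natural ways of making your heuristics rigorous fail in complementary regimes. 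Keeping only the exponential term (drop $(1-\theta)\Upsilon(z)$, keep $\theta\Upsilon(-z)$) gives, after the $\theta$-integration,
\begin{align*}
R(\beta)\;\le\;\frac{(1+\beta)^2}{2}\log(1+\beta)-\frac{\beta^2}{2}\log\beta-\frac{\beta}{2},
\end{align*}
which behaves like $\tfrac{\beta^2}{2}\log(1/\beta)$ as $\beta\to 0$ and therefore \emph{exceeds} $2\beta^2$ for small $\beta$; while, as you note yourself, the quadratic bound $\Upsilon\ge z^2/2$ produces a non-integrable $1/\theta$ singularity. So small and large $\beta$ genuinely need different arguments (for $\beta$ bounded one can use $(1-\theta)\Upsilon(z)+\theta\Upsilon(-z)\ge\min\{\Upsilon(z),\Upsilon(-z)\}$, giving $R(\beta)\le\beta^2/(2(1-\beta))$, and switch to the exponential bound for larger $\beta$), and none of this is in your write-up. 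The claim does appear to be true — numerically $R(\beta)\approx\beta^2/2$ for small $\beta$ and $R(\beta)\sim\beta\log\beta$ for large $\beta$ — so the route is salvageable, but as it stands the entire analytic content of the proof is missing. A second, minor, imprecision: the $\log\theta$ divergence of the slice infimum occurs only when $\beta>1$; for $\beta\le 1$ it is bounded uniformly in $\theta$.

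It is worth seeing why the paper never meets this difficulty. There, one writes $\log\frac{u(t_1,x_1)}{u(t_2,x_2)}\le\int_{t_1}^{t_2}\eta\,dt+\delta(s)-\gamma\int_s^{t_2}\Upsilon(\delta(t))\,dt$ with $\delta(t)=\log u(t,x_1)-\log u(t,x_2)$, $\gamma=w_{min}/\mu_{max}$, and chooses $s$ as a minimum point of $t\mapsto\delta(t)-\gamma\int_t^{t_2}\Upsilon(\delta(\tau))\,d\tau$. In the only nontrivial case this choice forces $\delta\ge 0$ on $[s,t_2]$, so the one-sided bound $\Upsilon(z)\ge z^2/2$ for $z\ge 0$ is legitimate, and the elementary minimization lemma of BHLLMY (Lemma 5.5 there) closes the argument. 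Your linear interpolation destroys this sign control — the quantity $\phi(s)$ can have either sign, which is exactly why you are driven to the two-sided exponential minimization and the unproven inequality $R(\beta)\le 2\beta^2$. Either prove that inequality in full (splitting $\beta$ into regimes as indicated), or restore the sign information by adopting the optimal-jump-time device.
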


In the proof, we closely follow the strategy of \cite[Theorem 5.2]{BHLLMY15}.

\begin{proof} We first consider the situation where $x_1\sim x_2$. Let 
$0<t_1<t_2$ and $s\in J:= [t_1,t_2]$.
Then we have by assumption (\ref{DHassumption}) that
\begin{align}
\log \frac{u(t_1,x_1)}{u(t_2,x_2)} & =\log \frac{u(t_1,x_1)}{u(s,x_1)}+\log 
\frac{u(s,x_1)}{u(s,x_2)}
+\log \frac{u(s,x_2)}{u(t_2,x_2)}\nonumber\\
& = -\int_{t_1}^s \partial_t \log u(t,x_1)\,dt+\log 
\frac{u(s,x_1)}{u(s,x_2)}-\int_{s}^{t_2} \partial_t \log u(t,x_2)\,dt\nonumber\\
& \le \int_{t_1}^s \big(\eta(t)-\Psi_\Upsilon(\log u)(t,x_1)\big)\,dt+\log 
\frac{u(s,x_1)}{u(s,x_2)}\nonumber\\
& \quad +\int_{s}^{t_2} \big(\eta(t)-\Psi_\Upsilon(\log 
u)(t,x_2)\big)\,dt\nonumber\\
& \le \int_{t_1}^{t_2} \eta(t)\,dt+\log 
\frac{u(s,x_1)}{u(s,x_2)}-\int_{s}^{t_2}\Psi_\Upsilon(\log 
u)(t,x_2)\,dt\nonumber\\
& \le \int_{t_1}^{t_2} \eta(t)\,dt+\log 
\frac{u(s,x_1)}{u(s,x_2)}-\frac{w_{min}}{\mu_{max}}\,\int_{s}^{t_2}
\Upsilon\big(\log u(t,x_1)-\log u(t,x_2)\big)\,dt\nonumber\\
& = \int_{t_1}^{t_2} \eta(t)\,dt+\delta(s)-\gamma\int_{s}^{t_2}
\Upsilon\big(\delta(t)\big)\,dt,
 \label{HarStep1}
\end{align}
where we set $\gamma=w_{min}/\mu_{max}$ and
\[
\delta(t)=\log u(t,x_1)-\log u(t,x_2),\quad t\in J.
\]
We choose $s\in J$ in such a way that the continuous function $\omega$ defined 
by
\[
\omega(t):=\delta(t)-\gamma\int_{s}^{t_2} \Upsilon\big(\delta(t)\big)\,dt,\quad 
t\in J,
\]
attains its minimum at $s$. 

Suppose that $\omega(s)\ge 0$. Then the positivity of $\Upsilon$ implies that 
$\delta\ge 0$ in 
$J$, and thus
\[
\Upsilon\big(\delta(t)\big)\ge \frac{1}{2}\,\delta(t)^2,\quad t\in J,
\]
since $\Upsilon(z)\ge z^2/2$ for all $z\ge 0$. Putting
\[
\tilde{\omega}(t):=\delta(t)-\frac{\gamma}{2}\,\int_{s}^{t_2} 
\delta(t)^2\,dt,\quad t\in J,
\]
it follows that $\omega(s)\le \min_{t\in J} \tilde{\omega}(t)$. From Lemma 5.5 
in \cite{BHLLMY15}
we now know that
\[
\min_{t\in J} \tilde{\omega}(t)\le \,\frac{2}{\gamma(t_2-t_1)}.
\]
Combining the last two inequalities and \eqref{HarStep1} yields
\begin{align} \label{HarStep2}
\log \frac{u(t_1,x_1)}{u(t_2,x_2)}\le \int_{t_1}^{t_2} 
\eta(t)\,dt+\,\frac{2}{\gamma(t_2-t_1)}.
\end{align}

Now we consider the case when $x_1$ and $x_2$ are not adjacent. Set 
$l=d(x_1,x_2)$. Since $G$
is connected, there is a path $x_1=y_0\sim y_1\sim \ldots \sim y_l=x_2$ of 
length $l$. Define the
numbers $\tau_i$, $i=0,\ldots,l$ by $\tau_i=t_1+i(t_2-t_1)/l$. Employing 
\eqref{HarStep2} we may estimate as follows.
\begin{align*}
\log \frac{u(t_1,x_1)}{u(t_2,x_2)} & = \sum_{i=1}^l \log 
\frac{u(\tau_{i-1},y_{i-1})}{u(\tau_i,y_i)} \\
& \le  \sum_{i=1}^l \big(\int_{\tau_{i-1}}^{\tau_i} 
\eta(t)\,dt+\,\frac{2}{\gamma(\tau_i-\tau_{i-1})}\big)\\
& =  \int_{t_1}^{t_2} \eta(t)\,dt+\frac{2 l^2}{\gamma (t_2-t_1)}.
\end{align*}
This implies \eqref{HarnackI}. 
\end{proof}

\medskip

Recall the definition $\Upsilon_\alpha(y)=\Upsilon(\alpha y)$.

\begin{theorem} \label{zweiteHarnack}
Let $G=(V,E)$, $\mu_{max}$ and $w_{min}$ as in Theorem \ref{theo:ersteHarnack}, 
and 
let $\alpha\in (0,1)$.

Suppose that
$u:\,(0,\infty)\times V\rightarrow (0,\infty)$ is $C^1$ in time and satisfies 
the differential
Harnack inequality
\begin{align*} 
\partial_t (\log u)\ge \Psi_\Upsilon(\log 
u)-\frac{1}{\alpha}\Psi_{\Upsilon_\alpha}(\log u)-\eta_\alpha(t)\quad 
\mbox{on}\;(0,\infty)\times V,
\end{align*}
where $\eta_\alpha:\,(0,\infty)\rightarrow [0,\infty)$ is continuous. Then for 
any $0<t_1<t_2$ and $x_1, x_2\in V$
we have
\begin{align*} 
u(t_1,x_1)\le u(t_2,x_2) \exp\Big(\int_{t_1}^{t_2} 
\eta_\alpha(t)\,dt+\frac{2\mu_{max} d(x_1,x_2)^2}{w_{min} (1-\alpha)(t_2-t_1)} 
\Big).
\end{align*}
\end{theorem}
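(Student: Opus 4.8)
The plan is to reproduce the proof of \autoref{theo:ersteHarnack} almost verbatim, the only new point being the correct handling of the extra term $-\frac{1}{\alpha}\Psi_{\Upsilon_\alpha}(\log u)$. The key observation is that this term merges with $\Psi_\Upsilon(\log u)$ into a single operator of the same kind. Writing $g_\alpha(z)=\Upsilon(z)-\frac{1}{\alpha}\Upsilon(\alpha z)$ as in \autoref{alphalemma} and applying this identity termwise (with $z=\log u(y)-\log u(x)$ and nonnegative coefficients $w_{xy}/\mu(x)$), one obtains
\[
\Psi_\Upsilon(\log u)-\frac{1}{\alpha}\Psi_{\Upsilon_\alpha}(\log u)=\Psi_{g_\alpha}(\log u).
\]
Since \autoref{alphalemma} asserts $g_\alpha\ge 0$ on $\iR$, the operator $\Psi_{g_\alpha}(\log u)$ is nonnegative, and the differential Harnack hypothesis takes the form $\partial_t(\log u)\ge \Psi_{g_\alpha}(\log u)-\eta_\alpha(t)$. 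This is exactly the structure exploited in \autoref{theo:ersteHarnack}, with $\Psi_\Upsilon$ replaced by $\Psi_{g_\alpha}$ and $\Upsilon$ replaced by $g_\alpha$.

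First I would treat the case $x_1\sim x_2$. Setting $\gamma=w_{min}/\mu_{max}$ and $\delta(t)=\log u(t,x_1)-\log u(t,x_2)$, I would insert an intermediate time level $s\in J:=[t_1,t_2]$ and telescope as in \eqref{HarStep1}: integrate the hypothesis, drop the nonnegative term $\Psi_{g_\alpha}(\log u)(t,x_1)$ on $[t_1,s]$, and keep $\Psi_{g_\alpha}(\log u)(t,x_2)$ on $[s,t_2]$. Since $x_1\sim x_2$ and every summand in $\Psi_{g_\alpha}$ is nonnegative, retaining only the edge joining $x_2$ to $x_1$ yields $\Psi_{g_\alpha}(\log u)(t,x_2)\ge \gamma\, g_\alpha(\delta(t))$. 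This produces
\[
\log\frac{u(t_1,x_1)}{u(t_2,x_2)}\le \int_{t_1}^{t_2}\eta_\alpha(t)\,dt+\delta(s)-\gamma\int_{s}^{t_2} g_\alpha(\delta(t))\,dt,
\]
the exact analogue of \eqref{HarStep1} with $g_\alpha$ in place of $\Upsilon$.

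The decisive difference, and the source of the factor $(1-\alpha)$, enters next. I would choose $s$ as the minimizer of $\delta$ over $J$ and consider the case $\omega(s)\ge 0$, where $\omega(t)=\delta(t)-\gamma\int_s^{t_2} g_\alpha(\delta(\tau))\,d\tau$; the complementary case $\omega(s)<0$ only improves the bound and is immediate. As in \autoref{theo:ersteHarnack}, $\omega(s)\ge 0$ forces $\delta\ge 0$ on $J$, but now I would invoke the quadratic lower bound \eqref{quadest}, namely $g_\alpha(z)\ge \frac{1-\alpha}{2}z^2$ for $z\ge 0$, in place of $\Upsilon(z)\ge z^2/2$. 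Comparing with $\tilde{\omega}(t)=\delta(t)-\frac{\gamma(1-\alpha)}{2}\int_s^{t_2}\delta(\tau)^2\,d\tau$ and applying Lemma 5.5 in \cite{BHLLMY15} with the constant $\gamma(1-\alpha)$ in place of $\gamma$ then gives $\omega(s)\le \frac{2}{\gamma(1-\alpha)(t_2-t_1)}=\frac{2\mu_{max}}{w_{min}(1-\alpha)(t_2-t_1)}$, which is the adjacent-vertex form of the claim. For general $x_1,x_2$ I would chain this estimate along a shortest path $x_1=y_0\sim\cdots\sim y_l=x_2$ of length $l=d(x_1,x_2)$ with the time levels $\tau_i=t_1+i(t_2-t_1)/l$, exactly as in \autoref{theo:ersteHarnack}; the $l$ equal subintervals generate the factor $d(x_1,x_2)^2$ and reassemble $\int_{t_1}^{t_2}\eta_\alpha$.

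All routine parts are identical to the previous proof; the only place demanding care is checking that $g_\alpha$ inherits the two properties of $\Upsilon$ used there—nonnegativity (to drop and retain summands) and the quadratic bound, but now with the weakened constant $\frac{1-\alpha}{2}$—both furnished by \autoref{alphalemma}. I expect no genuine obstacle beyond bookkeeping the factor $(1-\alpha)$ through the calculus lemma.
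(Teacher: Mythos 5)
Your proposal is correct and is exactly the paper's own argument: the paper likewise invokes \autoref{alphalemma} for the nonnegativity of $g_\alpha(z)=\Upsilon(z)-\tfrac{1}{\alpha}\Upsilon(\alpha z)$ and the bound $g_\alpha(z)\ge\tfrac{1-\alpha}{2}z^2$ on $[0,\infty)$, and then reruns the proof of \autoref{theo:ersteHarnack} with $\gamma$ replaced by $\gamma(1-\alpha)$. Your write-up merely fills in the details (the termwise identity $\Psi_\Upsilon-\tfrac{1}{\alpha}\Psi_{\Upsilon_\alpha}=\Psi_{g_\alpha}$, the single-edge estimate, the choice of $s$, and the chaining step) that the paper compresses into ``argue as above.''
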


\begin{proof} The arguments are the same as in the proof of Theorem 
\ref{theo:ersteHarnack}.
Instead of \eqref{HarStep1}, we obtain in the first step the estimate
\[
\log \frac{u(t_1,x_1)}{u(t_2,x_2)}\le \int_{t_1}^{t_2} 
\eta_\alpha(t)\,dt+\delta(s)-\gamma\int_{s}^{t_2}
\Big(\Upsilon\big(\delta(t)\big)-\frac{1}{\alpha}\Psi_{\Upsilon_\alpha}
\big(\delta(t)\big)\Big)\,dt.
\]
From Lemma \ref{alphalemma}, we know that the function 
$g(z):=\Upsilon(z)-\frac{1}{\alpha}\,\Upsilon(\alpha z)$ is nonnegative on $\iR$ 
and satisfies
$g(z)\ge (1-\alpha)z^2/2$ on $[0,\infty)$. Therefore, we can argue as above, 
replacing $\gamma$
by $\gamma(1-\alpha)$.
\end{proof}

%**************************************************************
% \bibliographystyle{alpha}
% \bibliography{graphs}

\newcommand{\etalchar}[1]{$^{#1}$}

\end{document}